\theoremstyle{plain}
\newtheorem{theor10}{Theorem}
\newenvironment{theor1}
  {\pushQED{\qed}\begin{theor10}}
  {\popQED\end{theor10}}
\newtheorem{conj10}[theor10]{Conjecture}
\newenvironment{conj1}
  {\pushQED{\qed}\begin{conj10}}
  {\popQED\end{conj10}}
\newtheorem{theor0}{Theorem}[section]
\newenvironment{theor}
  {\pushQED{\qed}\begin{theor0}}
  {\popQED\end{theor0}}
\newtheorem{lem0}[theor0]{Lemma}
\newenvironment{lem}
  {\pushQED{\qed}\begin{lem0}}
  {\popQED\end{lem0}}
\newtheorem{prop0}[theor0]{Proposition}
\newenvironment{prop}
  {\pushQED{\qed}\begin{prop0}}
  {\popQED\end{prop0}}
\newtheorem{cor0}[theor0]{Corollary}
\newenvironment{cor}
  {\pushQED{\qed}\begin{cor0}}
  {\popQED\end{cor0}}
\newtheorem{defin0}[theor0]{Definition}
\newenvironment{defin}
  {\pushQED{\qed}\begin{defin0}}
  {\popQED\end{defin0}}
\theoremstyle{definition}
\newtheorem{rems0}[theor0]{Remarks}
\newenvironment{rems}
  {\pushQED{\qed}\begin{rems0}}
  {\popQED\end{rems0}}
\newtheorem{rem0}[theor0]{Remark}
\newenvironment{rem}
  {\pushQED{\qed}\begin{rem0}}
  {\popQED\end{rem0}}
\numberwithin{equation}{section}
\newcommand{\N}{\mathbb N}
\newcommand{\R}{\mathbb R}
\newcommand{\Z}{\mathbb Z}
\newcommand{\T}{\mathbb T}
\newcommand{\e}{\varepsilon}
\newcommand{\Log}{|\!\log\e|}
\newcommand{\st}{{\operatorname{st}}}
\newcommand{\Lc}{\mathcal{L}}
\newcommand{\calB}{\mathcal{B}}
\newcommand{\Dm}{\mathbb{D}}
\newcommand{\calC}{\mathcal{C}}
\newcommand{\calK}{\mathcal{K}}
\newcommand{\Oc}{\mathcal U}
\newcommand{\Ac}{\mathcal A}
\newcommand{\Fc}{\mathcal F}
\newcommand{\Sc}{\mathcal S}
\newcommand{\Rc}{\mathcal R}
\newcommand{\calR}{\mathcal R}
\newcommand{\calH}{\mathcal H}
\newcommand{\Md}{\mathbb M}
\newcommand{\loc}{{\operatorname{loc}}}
\newcommand{\per}{{\operatorname{per}}}
\newcommand{\Id}{\operatorname{Id}}
\newcommand{\E}{\mathbb{E}}
\newcommand{\ee}{e}
\newcommand{\Aa}{\boldsymbol a}
\newcommand{\Bb}{\boldsymbol b}
\newcommand{\Cc}{\boldsymbol c}
\newcommand{\Ld}{\operatorname{L}}
\newcommand{\step}[1]{\noindent \textit{Step} #1.}
\newcommand{\Pm}{\mathbb{P}}
\newcommand{\expec}[1]{\mathbb{E}\left[ #1 \right]}
\newcommand{\expecm}[1]{\mathbb{E}\big[ #1 \big]}
\newcommand{\expecmm}[1]{\mathbb{E}\,[ #1 ]}
\newcommand{\expecM}[1]{\mathbb{E}\bigg[ #1 \bigg]}
\newcommand{\var}[1]{\mathrm{Var}\left[#1\right]}
\newcommand{\cov}[2]{\operatorname{Cov}\left[{#1};{#2}\right]}
\newcommand{\covm}[2]{\operatorname{Cov}\big[{#1};{#2}\big]}
\newcommand{\expeC}[2]{\mathbb{E}\left[\left. #1 \,\right\|\,#2\right]}
\newcommand{\expeCm}[2]{\mathbb{E}\big[ #1 \,\big\|\,#2\big]}
\title[Non-perturbative approach to the Bourgain--Spencer conjecture]
{Non-perturbative approach to the Bourgain--Spencer conjecture in stochastic homogenization}
\author[M. Duerinckx]{Mitia Duerinckx}
\address[Mitia Duerinckx]{Universit\'e Libre de Bruxelles, Département de Mathématique, 1050 Brussels, Belgium}
\email{mitia.duerinckx@ulb.be}
\begin{document}
\selectlanguage{english}

\begin{abstract}
In the context of stochastic homogenization, the Bourgain--Spencer conjecture states that the ensemble-averaged solution of a divergence-form linear elliptic equation with random coefficients admits an intrinsic description in terms of higher-order homogenized equations with an accuracy four times better than the almost sure solution itself. While previous rigorous results were restricted to a perturbative regime with small ellipticity ratio, we make the very first progress in a non-perturbative setting, establishing half of the conjectured optimal accuracy. The validity of the full conjecture remains an open question and might in fact fail in general. Our approach involves the construction of a new corrector theory in stochastic homogenization: while only a bounded number of correctors can be constructed as stationary $\Ld^2$ random fields, we show that twice as many stationary correctors can be defined in a Schwartz-like distributional sense on the probability space. We focus on the Gaussian setting for the coefficient field, and the proof relies heavily on Malliavin calculus.

\bigskip\noindent
{\sc MSC-class:}
35J15, 35B27, 60H25, 60H07, 60H30, 46F29.
\end{abstract}

\maketitle
\setcounter{tocdepth}{1}
\tableofcontents

\section{Introduction}\label{sec:intro}

Let $\Aa$ be a stationary and ergodic random coefficient field on $\R^d$ with values in $\R^{d\times d}$, with the following boundedness and ellipticity properties, for some $\lambda>0$,
\begin{equation}\label{eq:elliptic}
|\Aa(x)\ee|\le|\ee|,\qquad\ee\cdot\Aa(x)\ee\ge\lambda|\ee|^2,\qquad\text{for all $x,\ee\in\mathbb{R}^d$},
\end{equation}
and denote by $(\Omega,\Pm)$ the underlying probability space. In the sequel, we further assume that $\Aa$ satisfies some strong mixing condition, and we focus for simplicity on the Gaussian setting, see~Section~\ref{sec:as}, in which case we can take advantage of the powerful tools of Malliavin calculus.
Given a deterministic vector field $f\in C^\infty_c(\R^d)^d$, we consider the random family $\{\nabla u_{\e,f}\}_{\e>0}$ of unique almost sure gradient solutions in $\Ld^2(\R^d)^d$ of the rescaled Poisson problems
\begin{align}\label{eq:first-def-ups}
-\nabla\cdot \Aa(\tfrac\cdot\e)\nabla u_{\e,f}\,=\, \nabla\cdot f,\qquad\text{in $\R^d$},
\end{align}
where the rescaled coefficient field $\Aa(\tfrac\cdot\e)$ varies on the microscopic scale $\e>0$.
While stochastic homogenization theory has been focussing on the intrinsic description of spatial oscillations and random fluctuations of the random solution $\nabla u_{\e,f}$ in the macroscopic limit~$\e\downarrow0$,
the present work is rather concerned with the effective description of the ensemble-averaged solution $\expec{\nabla u_{\e,f}}$.

Before describing the Bourgain--Spencer conjecture and our results, in order to contextualize them properly, we start with a review of the state of the art in higher-order stochastic homogenization.
First, the qualitative theory~\cite{PapaVara,Kozlov-79} states the almost sure weak convergence of fields \mbox{$\nabla u_{\e,f}\rightharpoonup\nabla\bar u^1_f$} and fluxes $\Aa(\tfrac\cdot\e)\nabla u_{\e,f}\rightharpoonup\bar\Aa^1\nabla\bar u^1_f$ in $\Ld^2(\R^d)^d$,
where $\nabla\bar u^1_f$ is the unique gradient solution in $\Ld^2(\R^d)^d$ of the (deterministic) homogenized equation
\begin{equation}\label{eq:first-def-ubar}
-\nabla\cdot\bar\Aa^1\nabla\bar u^1_f=\nabla\cdot f,\qquad\text{in $\R^d$},
\end{equation}
where the effective coefficients $\bar\Aa^1\in\R^{d\times d}$ are given in each direction $\ee_i$, $1\le i\le d$, by
\[\bar\Aa^1\ee_i=\expec{\Aa(\nabla\varphi_i^1+\ee_i)},\]
in terms of the so-called corrector gradient $\nabla\varphi_i^1$ in the direction $\ee_i$, which is defined as the unique almost sure gradient solution in $\Ld^2_\loc(\R^d)^d$ of
\begin{equation}\label{eq:cor1}
-\nabla\cdot\Aa(\nabla\varphi_i^1+\ee_i)=0,\qquad\text{in $\R^d$,}
\end{equation}
such that $\nabla\varphi_i^1$ is a stationary field with vanishing expectation and with bounded second moments.
This convergence result for fields and fluxes is understood as follows: the heterogeneous field-flux constitutive law $\nabla w_\e\mapsto\Aa(\tfrac\cdot\e)\nabla w_\e$ is replaced on large scales by the effective law $\nabla\bar w\mapsto\bar\Aa^1\nabla\bar w$.

As the coefficient field $\Aa(\tfrac\cdot\e)$ is oscillating on scale $O(\e)$, fields and fluxes display oscillations on that scale too, which is why their convergence is only weak in~$\Ld^2(\R^d)^d$. Homogenization theory has aimed at describing these oscillations intrinsically by means of two-scale expansions.
The key ingredient is the corrector $\varphi^1=\{\varphi^1_i\}_{1\le i\le d}$, which is defined in~\eqref{eq:cor1} up to an additive random constant and is almost surely sublinear at infinity.
This corrector is interpreted as correcting Euclidean coordinates $\{x\mapsto x_i\}_{1\le i\le d}$ into $\Aa$-harmonic ones \mbox{$\{x\mapsto x_i+\varphi_i^1(x)\}_{1\le i\le  d}$}. The following so-called two-scale expansion
\begin{equation}\label{eq:2sc}
\Fc_\e^1[\bar u^1_f]\,:=\,\bar u^1_f+\e\varphi_i^1(\tfrac\cdot\e)\nabla_i\bar u^1_f
\end{equation}
is then viewed as an intrinsic Taylor expansion of the limiting profile $\bar u^1_f$ in $\Aa$-harmonic coordinates.\footnote{Throughout, we use Einstein's summation convention on repeated indices.} The standard corrector result~\cite{PapaVara,Kozlov-79} ensures that this expansion indeed captures leading-order oscillations in the sense that almost surely
\[\nabla u_{\e,f}-\nabla \Fc_\e^1[\bar u^1_f]\,\to\,0,\qquad\text{strongly in $\Ld^2(\R^d)^d$.}\]
In the last decade, many works have managed to further establish sharp convergence rates, which mainly require to investigate quantitatively the sublinearity of the corrector~$\varphi^1$; see~\cite{GNO2,GNO-quant,AKM2,AKM-book}.
In dimension $d>2$, the corrector can be chosen itself as a stationary field with bounded moments: it is then uniquely defined up to an additive {\it deterministic} constant, which is fixed for instance by choosing $\expec{\varphi^1}=0$. In dimension $d\le2$, the corrector cannot be chosen stationary as it has some nontrivial (sublinear) growth at infinity.
Based on optimal corrector estimates, it is deduced that for all $q<\infty$,
\[\expec{\|\nabla u_{\e,f}-\nabla \Fc_\e^1[\bar u^1_f]\|_{\Ld^2(\R^d)}^q}^\frac1q\,\lesssim_{f,q}\,\e\times\left\{\begin{array}{lll}
1&:&\text{$d>2$},\\
\Log^\frac12&:&\text{$d=2$},\\
\e^{-\frac12}&:&\text{$d=1$}.
\end{array}\right.\]

Next, we recall how this intrinsic description of oscillations is pursued to higher orders.
While the first corrector $\varphi^1$ is defined to correct Euclidean coordinates into $\Aa$-harmonic ones, higher-order correctors are defined iteratively to correct higher-order polynomials and make them adapted to the heterogeneous elliptic operator $\nabla\cdot\Aa\nabla$. More precisely, assuming that correctors $\varphi^1,\ldots,\varphi^{n-1}$ are defined as stationary fields, say with vanishing expectation,
the next-order corrector~$\varphi^{n}$ is uniquely defined up to an additive random constant by the following properties:
\begin{enumerate}[$\bullet$]
\item for any $n$th-order polynomial $\bar q$, the corrected polynomial $\Fc^n[\bar q]:=\sum_{k=0}^n\varphi^k_{i_1\ldots i_k}\nabla^k_{i_1\ldots i_k}\bar q$ captures oscillations of the operator in the sense that $\nabla\cdot\Aa\nabla \Fc^n[\bar q]$ is deterministic;
\smallskip\item the gradient $\nabla\varphi^n$ is a stationary field with vanishing expectation and with bounded second moments.
\end{enumerate}
In fact, this definition implies that
$\nabla\cdot\Aa\nabla \Fc^n[\bar q]=\nabla\cdot(\sum_{k=1}^{n-1}\bar\Aa^k\nabla^{k-1})\nabla\bar q$
for some higher-order effective tensors~$\bar\Aa^1,\ldots,\bar\Aa^{n-1}$.
As recalled in Section~\ref{sec:cor-th} below, higher-order correctors are alternatively defined by a hierarchy of cell problems, and higher-order tensors are given by
\begin{equation}\label{eq:def-barak}
\bar\Aa^k_{i_1\ldots i_{k-1}}\ee_{i_k}\,=\,\expecm{\Aa\big(\nabla\varphi^k_{i_1\ldots i_k}+\varphi^{k-1}_{i_1\ldots i_{k-1}}\ee_{i_k}\big)}.
\end{equation}
These tensors define corrections to the effective field-flux constitutive law: while the heterogeneous law $\nabla w_\e\mapsto\Aa(\frac\cdot\e)\nabla w_\e$ is replaced on large scales by the effective law $\nabla\bar w\mapsto\bar\Aa^1\nabla\bar w$ at leading order, dispersive corrections must be included when looking for finer accuracy,
\begin{equation*}
\nabla\bar w\,\mapsto\,\bar\Ac_\e^n\nabla\bar w\,:=\,\sum_{k=1}^n\e^{k-1}\bar\Aa_{i_1\ldots i_{k-1}}^k\nabla\nabla^{k-1}_{i_1\ldots i_{k-1}}\bar w.
\end{equation*}
To order $n$, the homogenized equation~\eqref{eq:first-def-ubar} is then formally replaced by
\begin{equation}\label{eq:homog-high}
-\nabla\cdot\bar\Ac_\e^n\nabla\bar U_{\e,f}^n\,=\,\nabla\cdot f,\qquad\text{in $\R^d$}.
\end{equation}
This equation is however ill-posed in general for $n\ge2$, as the symbol may be indefinite, and we shall use a suitable proxy $\nabla\bar\Oc^n_\e[f]$ for $\nabla\bar U_{\e,f}^n$, see~Definition~\ref{def:high-eq}.
In these terms, the two-scale expansion~\eqref{eq:2sc} is replaced by its higher-order version
\begin{equation}\label{eq:2sc-high}
\Fc_\e^n[ \bar\Oc_\e^n[f]]\,:=\,\sum_{k=0}^n\e^{k}\varphi^k_{i_1\ldots i_k}(\tfrac\cdot\e)\nabla^k_{i_1\ldots i_k}\bar\Oc_\e^n[f].
\end{equation}
In case of a periodic coefficient field $\Aa$, all correctors can be constructed as bounded periodic fields, and the approximation $\nabla u_{\e,f}-\nabla \Fc_\e^n[\bar\Oc_\e^n[f]]=O(\e^n)$ can be justified to all orders~$n\ge1$.
In contrast, a key specificity of the random setting is that only a finite number of correctors can be defined as stationary fields.
More precisely, as shown in~\cite{Gu-17,BFFO-17,DO1}, the corrector $\varphi^n$ can be chosen stationary with bounded moments only for~$n<\ell:=\lceil\frac d2\rceil$.
Although higher-order correctors could still be defined, their lack of stationarity and their spatial growth would make them unsuitable for error estimates.
The accuracy of the two-scale expansion~\eqref{eq:2sc-high} is then shown to saturate at order $O(\e^{d/2})$: for all~$q<\infty$,
\begin{equation}\label{eq:2sc-exp-err-res}
\expec{\|\nabla u_{\e,f}- \nabla\Fc_\e^\ell[\bar\Oc_\e^\ell [f]]\|_{\Ld^2(\R^d)}^q}^\frac1q\,\lesssim_{f,q}\,\e^\frac d2\times\left\{\begin{array}{lll}
1&:&\text{$d$ odd},\\
\Log^\frac12&:&\text{$d$ even}.
\end{array}\right.
\end{equation}
This accuracy is optimal as random fluctuations are known to become dominant at that order~$O(\e^{d/2})$; see~\cite{DGO1,DO1}.

In this context, the present work is devoted to the effective description of the ensemble-averaged field $\expec{\nabla u_{\e,f}}$ and flux $\expec{\Aa(\tfrac\cdot\e)\nabla u_{\e,f}}$ with optimal accuracy. Although relevant for applications, see~\cite[Section~4.2]{DGL}, this question has only been addressed very recently by Sigal~\cite{Sigal}.
A naïve estimate follows by taking the expectation in~\eqref{eq:2sc-exp-err-res},
which gives
\begin{equation}\label{eq:bnd-2sc-exp}
\|\expec{\nabla u_{\e,f}}-\nabla\bar\Oc^\ell_\e [f]\|_{\Ld^2(\R^d)}\,\lesssim_{f}\,\e^\frac d2\times\left\{\begin{array}{lll}
1&:&d~\text{odd},\\
\Log^\frac12&:&d~\text{even}.
\end{array}\right.
\end{equation}
While the limitation to accuracy $O(\e^{d/2})$ in~\eqref{eq:2sc-exp-err-res} is related to random fluctuations, this optimality argument fails for~\eqref{eq:bnd-2sc-exp}, thus indicating that this effective description of the ensemble-averaged field might well be pursued to higher order.
Following preliminary calculations by Sigal~\cite{Sigal} and Spencer, a striking work by Bourgain~\cite{Bourgain-18} and its refinement by Kim and Lemm~\cite{Lemm-18} have suggested that the expansion might in fact be pursued to order~$O(\e^{2d-})$, see~\cite{DGL}.
More precisely, the following so-called Bourgain--Spencer conjecture was formulated.
(The case of dimension $d=1$ is naturally omitted since a direct computation simply yields $\expec{\nabla u_{\e,f}}=\nabla\bar u_f^1$ in that case.)

\begin{conj1}[Bourgain, Spencer]\label{conj}
Let $d>1$.
There exists a collection $\{\bar\Aa^n\}_{1\le n\le2d}$ of constant tensors such that for all $f\in C^\infty_c(\R^d)^d$ and $\eta>0$,
\[\|\expec{\nabla u_{\e,f}}-\nabla\bar\Oc_\e^{2d}[f]\|_{\Ld^2(\R^d)}\,\lesssim_{f,\eta}\,\e^{2d-\eta},\]
where $\nabla\bar\Oc_\e^{2d}[f]$ is a proxy for the solution of the higher-order homogenized equation~\eqref{eq:homog-high} with coefficients $\{\bar\Aa^n\}_{1\le n\le2d}$.
\end{conj1}

In the perturbative regime when the random coefficient field $\Aa$ has small ellipticity ratio~$|\Aa-\Id\!|\ll1$,
Bourgain's work~\cite{Bourgain-18} and its refinement by Kim and Lemm~\cite{Lemm-18} entail that this conjecture holds to order
\begin{equation}\label{eq:Bourgain}
O\Big(\e^{2d-C\|\Aa-\Id\|_{\Ld^\infty(\R^d)}}\Big),
\end{equation}
that is, with a loss $\eta=C\|\Aa-\Id\|_{\Ld^\infty(\R^d)}$ proportional to the ellipticity ratio. This result relies on a fine analysis of perturbative expansions at small ellipticity ratio.
It is obtained in the model framework of discrete elliptic equations with iid coefficients, but can be extended to the present continuum setting (which we postpone to a future work).
This result was originally expressed in terms of harmonic analysis: as explained in~\cite{DGL}, there exists a uniformly elliptic matrix-valued kernel $\calB$ such that the ensemble-averaged field satisfies
\[-\nabla\cdot \calB(\e\nabla)\expec{\nabla u_{\e,f}}=\nabla\cdot f,\]
and Sigal originally proposed to investigate the $C^{\alpha}$ regularity of the kernel $\calB$ at small wavenumbers,
which turns out to be indeed equivalent to an effective description of $\expec{\nabla u_{\e,f}}$ to order~$O(\e^{\alpha})$ as in the above conjecture.

As the loss proportional to the ellipticity ratio in~\eqref{eq:Bourgain} seems unavoidable in the perturbative approach by Bourgain and followers~\cite{Bourgain-18,Lemm-18},
it is unclear whether the above conjecture should really be expected to hold to order $O(\e^{2d-})$ in general as stated.
In the present work, we take a different perspective and
develop non-perturbative quantitative homogenization techniques to investigate the conjecture away from the perturbative regime $|\Aa-\Id|\ll1$.
Our main result proves the general validity of ``half'' of the conjecture, that is, the accuracy of an effective description to order $O(\e^{d-})$, see~Theorem~\ref{th:main} below. Note that this is twice better than the naïve estimate~\eqref{eq:bnd-2sc-exp}.
Our argument splits into two parts:
\begin{enumerate}[(i)]
\item \emph{Weak corrector theory:}\\
Although higher-order correctors $\varphi^n$ with $n\ge\ell=\lceil\frac d2\rceil$ cannot be constructed as stationary fields with bounded moments, we show that stationary correctors can be constructed up to order~\mbox{$n<d$} in a Schwartz-like distributional sense on the probability space. More precisely, for~$n<d$, weak-type expressions of the form~$\expecm{X\varphi^{n}(x)}$ can be meaningfully defined for all ``smooth and local'' test random variables~$X$, in a way that is compatible with corrector equations and stationarity. At order $n=d$, only the corrector gradient $\nabla\varphi^d$ can be defined as a stationary object in the same weak sense.
Denoting by $\Sc(\Omega)$ the space of suitable test random variables, correctors $\{\varphi^n\}_{1\le n\le d}$ are defined as (smooth) maps $\R^d\to\Sc'(\Omega)$ with values in the space of linear functionals on $\Sc(\Omega)$.
The Gelfand triple $\Sc(\Omega)\subset\Ld^2(\Omega)\subset\Sc'(\Omega)$ is viewed as a stochastic version of the usual triple $\Sc(\R^d)\subset\Ld^2(\R^d)\subset\Sc'(\R^d)$ of Schwartz test functions and distributions.
We refer to Section~\ref{sec:weak-cor0} for details.
\smallskip\item \emph{Weak two-scale expansions:}\\
In terms of these weak correctors $\{\varphi^n\}_{1\le n\le d}$, we may construct as in~\eqref{eq:2sc-high} the corresponding two-scale expansion $\Fc_\e^d[\bar\Oc_\e^d[f]]$ to order $d$.
The difficulty is that $\nabla\Fc_\e^d[\bar\Oc_\e^d[f]]$ is only defined in the weak sense of $\Sc'(\Omega;\Ld^2(\R^d))$ and can in particular not be used in any energy estimate to get error bounds as is usually done in the classical theory~\cite{Gu-17,DO1}. By duality, we however manage to prove the accuracy of this two-scale expansion in a suitable weak sense, and we deduce the validity of Conjecture~\ref{conj} to order~$O(\e^{d-})$.
We refer to Section~\ref{sec:BSconj0} for details.
\end{enumerate}
Such a description of the field $\nabla u_{\e,f}$ in a distributional sense on the probability space was originally inspired by our work with Shirley~\cite{DS1,DS2} on random Schrödinger operators, and it constitutes a new type of results in the field.
Note that spaces of Schwartz-like distributions on the probability space first appeared in the works of Kondratiev and Hida in the context of SPDEs, see e.g.~\cite{HOUZ-96} and references therein, but our definition is quite different and is closer to~\cite{AKL-16}, see~Section~\ref{sec:R'om}.

\subsection*{Notation}
\begin{enumerate}[\quad$\bullet$]
\item We denote by $C\ge1$ any constant that only depends on the dimension $d$, on the ellipticity constant $\lambda$ in~\eqref{eq:elliptic}, as well as on $\|a_0\|_{W^{1,\infty}}$ and $\int_{\R^d}[\calC_0]_\infty$ in~\eqref{eq:def-A} and~\eqref{eq:cov-L1} below. We use the notation~$\lesssim$ (resp. $\gtrsim$) for $\le C\times$ (resp. $\ge\frac1C\times$) up to such a multiplicative constant~$C$.
We write~$\simeq$ when both $\lesssim$ and $\gtrsim$ hold. The notation $\ll$ stands for $\le\frac1C\times$ for some large enough constant $C$.
We add subscripts to $C$, $\lesssim$, $\gtrsim$, $\simeq$, $\ll$ to indicate dependence on other parameters.
\smallskip\item For a matrix field $H=(H_{ij})_{1\le i,j\le d}$, we define its divergence $\nabla\cdot H$ as the vector field given by $(\nabla\cdot H)_i:=\nabla_jH_{ij}$, where we use Einstein's summation convention on repeated indices. For a vector field $h=(h_i)_{1\le i\le d}$, we define its curl as the skew-symmetric matrix field given by $(\nabla\times h)_{ij}:=\nabla_ih_j-\nabla_jh_i$.
\smallskip\item The ball centered at $x$ of radius $r$ in $\R^d$ is denoted by $B_r(x)$, and we simply write $B(x):=B_1(x)$, $B_r:=B_r(0)$, and $B:=B_1(0)$.
We also write $Q_r:=[-\frac r2,\frac r2)^d$ for the cube centered at the origin with side length $r$.
\smallskip\item For $r\in\R$ we denote by $\lceil r\rceil$ the smallest integer~$\ge r$, and we recall the notation~$\ell=\lceil\frac d2\rceil$. For $r,s\in\R$ we write $r\vee s:=\max\{r,s\}$ and $r\wedge s:=\min\{r,s\}$.
For $x\in\R^d$ we set $\langle x\rangle:=(1+|x|^2)^{1/2}$, and similarly $\langle\nabla\rangle:=(1-\triangle)^{1/2}$.
\smallskip\item For a function $g$ and for $1\le p<\infty$, we write $[g]_p(x):=(\fint_{B(x)}|g|^p)^{1/p}$ for the local moving $\Ld^p$ average, and similarly $[g]_\infty(x):=\sup_{B(x)}|g|$. For averages at the scale $\e$, we write $[g]_{p;\e}(x):=(\fint_{B_\e(x)}|g|^p)^{1/p}$.
\smallskip\item We often use cartesian products of functions and of operators to shorten the notation. Given two functions $h:E\to\R^n$ and $h':E\to\R^{n'}$, we let $(h,h')$ denote their cartesian product, that is, the function $E\to\R^{n+n'}$ given by $(h,h')(x):=(h(x),h'(x))$ for~$x\in E$.
Given two operators $T,T'$ on a Banach space~$\mathcal B$ of functions $E\to\R$, we similarly denote by $(T,T')$ the operator given by $(T,T')h:=(Th,T'h)$ for $h\in\mathcal B$.
\end{enumerate}

\section{Main results}
This section is devoted to the statement and discussion of our main results. For ease of reading, detailed statements are postponed to the proof sections.

\subsection{Assumptions}\label{sec:as}
We focus on the model Gaussian setting for the random coefficient field $\Aa$, in which case Malliavin calculus is available and substantially simplifies the analysis.
More precisely, we assume that the coefficient field takes the form
\begin{equation}\label{eq:def-A}
\Aa(x):=a_0(G(x)),
\end{equation}
where $a_0\in C^1_b(\R^\kappa)^{d\times d}$ is such that the ellipticity and boundedness assumptions~\eqref{eq:elliptic} are pointwise satisfied, and where $G:\R^d\times\Omega\to\R^\kappa$ is an $\R^\kappa$-valued centered stationary Gaussian random field on $\R^d$ with covariance function $\calC$, constructed on a probability space~$(\Omega,\Pm)$.\footnote{By centered stationary Gaussian random field, we mean the following: $G=\{G(x)\}_{x\in\R^d}$ is a collection of $\R^\kappa$-valued Gaussian random variables indexed by $\R^d$, with expectation $\expec{G(x)}=0$ and with covariance of the form $\expec{G(x)\otimes G(y)}=\calC(x-y)$, where~$\calC$ belongs to $\Ld^\infty(\R^d)^{\kappa\times\kappa}$ and satisfies $\calC(-x)=\calC(x)$.}
Note that we write $G(x)=G(x,\cdot):\Omega\to\R^\kappa$ for the evaluation at $x$.
In addition, we assume that~$G$ has integrable correlations in the following sense: starting from the representation
\begin{equation}\label{eq:G-rep}
G=\calC_0\ast\xi,
\end{equation}
where~$\xi$ is a $\kappa$-dimensional Gaussian white noise on $\R^d$ and where the {model}~$\calC_0:\R^d\to\R^{\kappa\times\kappa}$ satisfies $\calC_0\ast\calC_0=\calC$, we assume that $\calC_0$ satisfies the integrability condition
\begin{equation}\label{eq:cov-L1}
\int_{\R^d}[\calC_0]_\infty\,<\,\infty.
\end{equation}
In particular, this entails that the covariance function $\calC$ satisfies the same integrability condition $\int_{\R^d}[\calC]_\infty<\infty$.
Moreover, $\calC$ is necessarily continuous, so that $G$ and $\Aa$ are stochastically continuous and jointly measurable on $\R^d\times\Omega$.

Restricting to this Gaussian setting is essential as our approach relies on Malliavin calculus techniques.
Our analysis is easily repeated mutatis mutandis in the Poisson setting and in the iid discrete setting, using corresponding stochastic calculus tools that are available in those cases, e.g.~\cite{Peccati-Reitzner,D-20a}.
The general case of an $\alpha$-mixing coefficient field is more involved and is postponed to future work.
Regarding the integrability condition~\eqref{eq:cov-L1}, it could be easily relaxed: the case of non-integrable correlations could be treated similarly but would yield different scalings; see also~\cite{GNO-quant,DGO2,DFG1}.

Without loss of generality, we can assume that the probability space $(\Omega,\Pm)$ is endowed with the $\sigma$-algebra generated by the underlying white noise $\xi$ in~\eqref{eq:G-rep}.
This countably generated $\sigma$-algebra coincides with the one generated by the Gaussian field~$G$ under assumptions~\eqref{eq:G-rep}--\eqref{eq:cov-L1}.
We then consider the following model subspace of ``smooth and local'' random variables,
\begin{multline}\label{eq:defRom-0}
\Sc(\Omega)\,:=\,\bigg\{g\Big(\int_{\R^d}h_1\cdot\xi,\ldots,\int_{\R^d}h_n\cdot\xi\Big)~:~n\in\N,~g\in C^\infty_b(\R^n),\\~
h_1,\ldots,h_n\in C^\infty_c(\R^d)^\kappa\bigg\},
\end{multline}
and the choice of the $\sigma$-algebra ensures that this subspace is dense in $\Ld^q(\Omega)$ for all~$q<\infty$.
We denote by~$\Sc'(\Omega)$ the space of continuous linear functionals on $\Sc(\Omega)$, viewed as a space of Schwartz-like distributions on the probability space, see~Section~\ref{sec:R'om}.
For $X\in\Sc'(\Omega)$ and $Y\in\Sc(\Omega)$, we abusively use the notation $\expec{XY}=\langle X,Y\rangle_{\Sc'(\Omega),\Sc(\Omega)}$ for the duality product.

\subsection{Towards the Bourgain--Spencer conjecture}\label{sec:BSconj0}
The following main result justifies the effective description of the ensemble-averaged field and flux to order $O(\e^{d-})$, {thus providing a first nontrivial step towards Conjecture~\ref{conj}}.
We do not know whether this is optimal in general.
We refer to Theorem~\ref{th:main-2} for a more detailed statement.
In contrast with the $\Ld^2$-estimate stated in Conjecture~\ref{conj}, note that the error estimate is rather obtained here in a mixed norm~$\|[\cdot]_{2;\e}\|_{\Ld^p(\R^d)}$, which amounts to an $\Ld^2$-norm on small $O(\e)$ scales and to an $\Ld^p$-norm on large scales with some $p\gg2$; we do not further investigate this technical difference.

\begin{theor1}[Effective description of ensemble averages]\label{th:main}
Let $d>1$
and let the higher-order effective tensors $\{\bar\Aa^n\}_{1\le n\le d}$ be defined in~\eqref{eq:def-efftens} below.
For all $f\in C^\infty_c(\R^d)^d$, denoting by $\nabla\bar\Oc_\e^{d}[f]$ a proxy for the solution of the associated higher-order homogenized equation~\eqref{eq:homog-high} with $n=d$, see~Definition~\ref{def:high-eq}, there holds for all {$0<\eta\ll1$ and $p>\frac d{\eta}$},
\begin{align*}
&\big\|\big[\,\expec{\nabla u_{\e,f}}-\nabla\bar\Oc_\e^d[f]\,\big]_{2;\e}\big\|_{\Ld^p(\R^d)}\\
+~&\big\|\big[\,\expecmm{\Aa(\tfrac\cdot\e)\nabla u_{\e,f}}-\bar\Ac_\e^d\nabla\bar\Oc_\e^d[f]\,\big]_{2;\e}\big\|_{\Ld^p(\R^d)}
\qquad\,\lesssim_{f,p,\eta}\,~\e^{d-\eta}.
\qedhere
\end{align*}
\end{theor1}

Although this statement is only concerned with ensemble averages, our proof is based on a two-scale expansion for $\nabla u_{\e,f}$ in the weak sense of $\Sc'(\Omega;\Ld^2(\R^d))$.
In this spirit, a natural side question concerns the intrinsic description of fluctuations $\nabla u_{\e,f}-\expec{\nabla u_{\e,f}}$ in a similar weak sense; this is briefly addressed in Appendix~\ref{sec:fluct}.

\subsection{Weak corrector theory}\label{sec:weak-cor0}
While only the {first $\ell-1=\lceil\frac d2\rceil-1$ correctors} can be constructed as stationary fields with bounded moments, see~Theorem~\ref{th:cor} below, we show that essentially twice as many stationary correctors can be constructed in the distributional sense on the probability space. We do not know whether this is optimal in general.
We refer to Theorem~\ref{th:weak-cor} for a more detailed statement, where correctors are further controlled in some dual Malliavin--Sobolev spaces.

\begin{theor1}[Weak correctors]\label{th:weak-cor0}
Let $d>1$.
Higher-order weak correctors $\{\varphi^n\}_{1\le n\le d}$ are uniquely defined in (a suitable subspace of) $C^{\infty}(\R^d;\Sc'(\Omega))$
such that the standard cell problems {(see~\eqref{eq:phidef}--\eqref{eq:baradef} below)} are satisfied in a corresponding weak sense,
such that~$\varphi^n$ is stationary with $\expec{\varphi^n}=0$ for all $n<d$,
and such that $\nabla\varphi^d$ is stationary with $\expecmm{\nabla\varphi^d}=0$ and with the anchoring condition $\int_B\varphi^d=0$.
In addition, the following estimates hold:
\begin{enumerate}[(i)]
\item \emph{Weak sublinearity of $\varphi^d$:}
for all $X\in\Sc(\Omega)$ and $\eta>0$,
\begin{equation*}
\qquad\big|\expecmm{X\varphi^d(x)}\big|
\,\lesssim_{X,\eta}\,\langle x\rangle^\eta.
\end{equation*}
\item \emph{Weak fluctuation scaling:}
for all $n< d$, $X\in\Sc(\Omega)$, $g\in C^\infty_c(\R^d)$, and $p<\frac{d}{n}$,
\begin{equation*}
\qquad\bigg|\expecM{X\int_{\R^d}g\,\nabla\varphi^{n+1}}\bigg|+\bigg|\expecM{X\int_{\R^d}g\,\varphi^{n}}\bigg|
\,\lesssim_{X,p}\,\|[g]_2\|_{\Ld^p(\R^d)}.\qedhere
\end{equation*}
\end{enumerate}
\end{theor1}

\smallskip
\begin{rems}\label{rems1}
A few comments are in order.
\nopagebreak
\begin{enumerate}[(a)]
\item{\it Criticality at order $d$:}\\
Rather than constructing the corrector $\varphi^{n}$ itself, the proof of Theorem~\ref{th:weak-cor0} can be viewed as constructing its Malliavin derivative $D\varphi^{n}$
and monitoring the decay of the latter: item~(ii) is formally understood as $D_z\nabla\varphi^{n+1}(x)=O(\langle x-z\rangle^{n-d})$ in a weak sense.
The limitation is manifest: for $n\ge d$, no decay is left and the Malliavin derivative could at best be defined up to $(n-d)$th-order $\Aa$-harmonic polynomials.
This makes it unclear how the theory could be extended to higher order.

\smallskip\noindent
Criticality at order~$d$ is new in stochastic homogenization, but we note that the same actually holds in the following simple deterministic example: given a deterministic coefficient field of the form \mbox{$\Aa=\Id+\Cc$} with~$\Cc$ supported in the unit ball $B$, corresponding correctors $\{\psi^n\}_{1\le n\le d}$ can be constructed as unique decaying solutions of cell problems~\mbox{\eqref{eq:phidef}--\eqref{eq:qdef}} with~$|\nabla\psi^{n+1}(x)|\lesssim\langle x\rangle^{n-d}$, while for $n>d$ correctors would similarly be defined only up to 
$(n-d)$th-order harmonic polynomials.
\medskip\item {\it Explicit 1D case:}\\
In dimension $d=1$, the first corrector is explicit and its analysis provides an instructive illustration of the above result.
Choosing it with anchoring \mbox{$\phi^1(0)=0$}, it takes the form
\[\quad\nabla\phi^1(x)\,:=\,\expecm{\tfrac1\Aa}^{-1}\big(\tfrac1\Aa-\expecm{\tfrac1\Aa}\big),\qquad\phi^1(x)\,:=\,\expecm{\tfrac1\Aa}^{-1}\int_0^x\big(\tfrac1\Aa-\expecm{\tfrac1\Aa}\big).\]
Comparing $\frac1\Aa-\expec{\frac1\Aa}$ to a white noise, $\phi^1$ is compared to a Brownian motion.
The central limit theorem then yields $\|\phi^1(x)\|_{\Ld^q(\Omega)}\simeq_q\langle x\rangle^{1/2}$ for all $q<\infty$, which entails that no random constant can be added to $\phi^1$ to make it stationary. Formally, a stationary version of $\phi^1$ would be given by
\[\quad\varphi^1(x)\,:=\,\expecm{\tfrac1\Aa}^{-1}\int_{-\infty}^x\big(\tfrac1\Aa-\expecm{\tfrac1\Aa}\big),\]
but this integral makes no pointwise sense. Such an object can however be defined as follows: assuming for simplicity that the covariance function $\calC$ is compactly supported in $B$, so that $\Aa$ has a unit range of dependence, we can formally ``compute'' conditional expectations from the above formula: for all $R>0$,
\begin{equation}\label{eq:1D-explicit}
\quad\expeC{\varphi^1(x)}{\Aa|_{[-R,R]}}\,:=\,\expecm{\tfrac1\Aa}^{-1}\int_{-(R+1)}^x\expeC{\tfrac1\Aa-\expecm{\tfrac1\Aa}}{\Aa|_{[-R,R]}},
\end{equation}
where the right-hand side now makes perfect sense as a bounded random variable.
In other words, this stationary corrector $\varphi^1$ is only defined via its conditional expectations with respect to the coefficient field $\Aa$ on bounded sets. This gives sense to weak-type expressions of the form $\expecmm{X\varphi^1(x)}$ for all {local} test random variables $X$.
Note that this explicit 1D argument makes no use of Malliavin calculus.
\medskip\item {\it Improving on the topology:}\\
As stated in Theorem~\ref{th:weak-cor}, our analysis provides a control on correctors in some dual Malliavin--Sobolev spaces, which quantifies how smooth and local the test random variable $X$ should be for $\expec{X\varphi^n}$ to be well-defined.
However, we do not know whether this control is optimal, and in particular whether Malliavin calculus could be avoided.
At order~\mbox{$\ell=\lceil\frac d2\rceil$}, our analysis yields the following improved estimate: if the covariance function $\calC$ is compactly supported, then the stationary corrector~$\varphi^\ell$ is uniquely defined in $C^\infty(\R^d;\Sc'(\Omega))$ with $\expecmm{\varphi^\ell}=0$, and it satisfies for all $x\in\R^d$, $R>0$, and $q<\infty$,
\[\big\|\expeCm{\varphi^\ell(x)}{\Aa|_{B_R}}\big\|_{\Ld^q(\Omega)}\,\lesssim_{\calC,q}\,\left\{\begin{array}{lll}
\langle R\rangle^\frac12&:&\text{$d$ odd},\\
\log(2+R)^\frac12&:&\text{$d$ even}.
\end{array}\right.\]
This generalizes the above explicit 1D construction~\eqref{eq:1D-explicit}, and we do not know whether such estimates extend to higher orders $n>\ell$.
\medskip\item{\it Evaluating higher-order effective tensors:}\\
By the definition of weak correctors in Theorem~\ref{th:weak-cor0}, formula~\eqref{eq:def-barak} now makes sense and defines higher-order effective tensors for all $1\le n\le d$,
\begin{equation}\label{eq:def-efftens}
\quad\bar\Aa^n_{i_1\ldots i_{n-1}}\ee_{i_n}\,=\,\expecm{\Aa\big(\nabla\varphi^n_{i_1\ldots i_n}+\varphi^{n-1}_{i_1\ldots i_{n-1}}\ee_{i_n}\big)}.
\end{equation}
This formula can be substantially simplified in view of the following observation: if~$\Aa$ is pointwise symmetric, then we have for all~$1\le n\le d$,
\[\quad\nabla\cdot\bar\Aa^n_{i_1\ldots i_{n-1}}\nabla\nabla^{n-1}_{i_1\ldots i_{n-1}}\,=\,\left\{\begin{array}{lll}
\nabla\cdot\bar\Bb^n_{i_1\ldots i_{n-1}}\nabla\nabla^{n-1}_{i_1\ldots i_{n-1}}&:&\text{if $n$ is odd},\\
0&:&\text{if $n$ is even},
\end{array}\right.\]
where we have defined for all $n=2m+1$ with $m<\ell=\lceil\frac d2\rceil$,
\begin{equation*}
\qquad\ee_j\cdot\bar\Bb^n_{i_1\ldots i_{n-1}}\ee_{i_n}
:=(-1)^{m+1}\E\Big[\nabla\varphi^{m+1}_{ji_n\ldots i_{m+2}}\cdot\Aa\nabla\varphi^{m+1}_{i_1\ldots i_{m+1}}-\varphi^m_{ji_n\ldots i_{m+3}}\ee_{i_{m+2}}\cdot\Aa\varphi_{i_1\ldots i_m}^m\ee_{i_{m+1}}\Big].
\end{equation*}
(This identity states that the tensors $\bar\Aa^n$ and $\bar\Bb^n$ coincide at least after symmetrizing their indices, which is what matters when considering associated differential operators.)
This shows that~$\bar\Aa^n$ can be computed in terms of correctors of order $\le\lceil\frac n2\rceil$ only, so that the definition of $\{\bar\Aa^k\}_{1\le k\le d}$ only requires the use of standard ``strong'' correctors $\{\varphi^k\}_{1\le k\le\ell}$.
This identity was first noticed in our recent work~\cite[Remark~2.5]{DO1} with Otto, and also independently in~\cite[Theorem~3.5]{Pouch-19}.
\qedhere
\end{enumerate}
\end{rems}

\section{Preliminary: higher-order homogenization}
In this section, we recall some tools and notation from the higher-order quantitative theory of stochastic homogenization.
We start with large-scale regularity theory for linear elliptic equations with random coefficients~\cite{AS,AKM-book,GNO-reg}; for the purpose of this work, we focus on annealed $\Ld^p$ regularity as we introduced with Otto in~\cite{DO1}.
Next, we recall the higher-order intrinsic description of spatial oscillations by means of two-scale expansions~\cite{Gu-17,BFFO-17,DO1}, which is the starting point for our analysis.

\subsection{Annealed regularity theory}\label{sec:Lpreg}
Given $h\in \Ld^2(\R^d;\Ld^2(\Omega)^d)$, we consider the unique gradient solution $\nabla w_h\in\Ld^2(\R^d;\Ld^2(\Omega)^d)$ of the linear elliptic equation
\begin{equation}\label{eq:test}
-\nabla\cdot\Aa\nabla w_h=\nabla\cdot h,\qquad\text{in $\R^d$},
\end{equation}
and we denote by $\calH$ the associated solution operator, or heterogeneous Helmholtz projection on $\Ld^2(\R^d;\Ld^2(\Omega)^d)$,
\begin{equation}\label{eq:test-defT}
\calH h:=\nabla w_h,\qquad\calH=\nabla(-\nabla\cdot\Aa\nabla)^{-1}\nabla\cdot
\end{equation}
Due to the random character of the coefficient field $\Aa$, aside from the energy inequality and from Meyers' perturbative improvements, maximal $\Ld^p$ regularity cannot hold in a deterministic form. Appealing to homogenization, however, the heterogeneous elliptic operator can be replaced on large scales by a homogeneous one, cf.~\eqref{eq:first-def-ubar}, for which the standard constant-coefficient regularity theory is available. For this reason, the solution to~\eqref{eq:test} is expected to enjoy improved regularity properties on large scales.
In this spirit, a quenched large-scale regularity theory was first outlined by Armstrong and Smart~\cite{AS}, and further developed in~\cite{Armstrong-Mourrat-16,AKM2,AKM-book,GNO-reg}. 
The following annealed regularity result is a useful variant of this theory and was first established with Otto in~\cite[Section~6]{DO1}: it states that maximal $\Ld^p$ regularity holds as in the constant-coefficient setting after taking ensemble averages (up to a tiny loss in stochastic integrability).

\begin{theor}[Annealed $\Ld^p$ regularity; \cite{DO1}]\label{th:CZ-ann}
With the above notation, there holds for all $h\in C^\infty_c(\R^d;\Ld^\infty(\Omega)^d)$, \mbox{$1<p,q<\infty$}, and $\delta>0$,
\begin{equation*}
\|[\calH h]_2\|_{\Ld^p(\R^d;\Ld^{q}(\Omega))}\,\lesssim_{p,q,\delta}\,
\|[h]_2\|_{\Ld^p(\R^d;\Ld^{q+\delta}(\Omega))}.
\qedhere
\end{equation*}
\end{theor}

\subsection{Higher-order description of oscillations}\label{sec:cor-th}
Correctors are defined to correct Euclidean polynomials and make them adapted to the heterogeneous elliptic operator, and they constitute the key ingredient to describe spatial oscillations for solutions of the heterogeneous Poisson problem~\eqref{eq:first-def-ups}.
The following standard result recalls the hierarchy of corrector equations.
Item~(i) gives sharp corrector estimates~\cite{Gu-17,BFFO-17,DO1}, and item~(ii) states that higher-order correctors have less integrable correlations~\cite{MO,Gu-17} so that their large-scale averages have worse fluctuation scaling~\cite[proof of Lemma~3.3]{DO1}.

\begin{theor}[Higher-order correctors; \cite{Gu-17,BFFO-17,DO1}]\label{th:cor}
Let $d\ge1$ and recall $\ell=\lceil\frac d2\rceil$. Higher-order correctors $\{\varphi^n\}_{0\le n\le\ell}$, flux correctors $\{\sigma^n\}_{0\le n\le \ell}$, fluxes $\{q^n\}_{1\le n\le\ell}$, and effective tensors $\{\bar\Aa^n\}_{1\le n\le\ell}$ are uniquely well-defined iteratively as follows:
\begin{enumerate}[$\bullet$]
\item $\varphi^0:=1$ and for all $1\le n\le\ell$ we define $\varphi^n:=(\varphi^n_{i_1\ldots i_n})_{1\le i_1,\ldots,i_n\le d}$ where $\varphi^n_{i_1\ldots i_n}$ is the unique weak solution in $H^1_\loc(\R^d;\Ld^2(\Omega))$ of
\begin{equation}\label{eq:phidef}
-\nabla\cdot\Aa\nabla\varphi^n_{i_1\ldots i_n}=\nabla\cdot\big((\Aa\varphi^{n-1}_{i_1\ldots i_{n-1}}-\sigma^{n-1}_{i_1\ldots i_{n-1}})\,\ee_{i_n}\big),\qquad\text{in $\R^d$},
\end{equation}
such that $\varphi^n$ is stationary with $\expec{\varphi^n}=0$ for all $n<\ell$, and such that $\nabla\varphi^\ell$ is stationary with $\expecmm{\nabla\varphi^\ell}=0$ and with anchoring $\int_B\varphi^\ell=0$.
\smallskip\item $\sigma^0:=0$ and for all $1\le n\le\ell$ we define $\sigma^n:=(\sigma^n_{i_1\ldots i_n})_{1\le i_1,\ldots,i_n\le d}$ where $\sigma^n_{i_1\ldots i_n}$ is the unique weak solution in $H^1_\loc(\R^d;\Ld^2(\Omega)^{d\times d})$, with skew-symmetric matrix values, of
\begin{equation*}
-\triangle\sigma^n_{i_1\ldots i_n}=\nabla\times q^n_{i_1\ldots i_n},\qquad\text{in $\R^d$},
\end{equation*}
such that $\sigma^n$ is stationary with $\expec{\sigma^n}=0$ for all $n<\ell$, and such that $\nabla\sigma^\ell$ is stationary with $\expecmm{\nabla\sigma^\ell}=0$ and with anchoring $\int_B\sigma^\ell=0$. In particular, it satisfies
\begin{equation*}
\nabla\cdot\sigma^n_{i_1\ldots i_n}=q^n_{i_1\ldots i_n},\qquad\text{in $\R^d$}.
\end{equation*}
\item For all $1\le n\le\ell$ we define $q^n:=(q^n_{i_1\ldots i_n})_{1\le i_1,\ldots,i_n\le d}$ with $q^n_{i_1\ldots i_n}$ given by
\begin{equation}\label{eq:qdef}
q^n_{i_1\ldots i_n}:=\Aa\nabla\varphi^{n}_{i_1\ldots i_n}+(\Aa\varphi^{n-1}_{i_1\ldots i_{n-1}}-\sigma^{n-1}_{i_1\ldots i_{n-1}})\,\ee_{i_n}-\bar\Aa_{i_1\ldots i_{n-1}}^{n}\ee_{i_n}.
\end{equation}
\item For all $1\le n\le\ell$ we define $\bar\Aa^n:=(\bar\Aa^n_{i_1\ldots i_{n-1}})_{1\le i_1,\ldots,i_{n-1}\le d}$ with $\bar\Aa^n_{i_1\ldots i_{n-1}}$ the matrix given by
\begin{equation}\label{eq:baradef}
\bar\Aa^n_{i_1\ldots i_{n-1}}\ee_{i_n}:=\expecm{\Aa\big(\nabla\varphi^{n}_{i_1\ldots i_n}+\varphi^{n-1}_{i_1\ldots i_{n-1}}\ee_{i_n}\big)},
\end{equation}
which precisely ensures $\expec{q^n}=0$ in~\eqref{eq:qdef}.
\end{enumerate}
\smallskip
In addition, the following estimates hold:
\begin{enumerate}[(i)]
\item\emph{Corrector estimates:}
for all $0\le n<\ell$ and $q<\infty$,
\[\qquad\expecm{[(\varphi^n,\sigma^n)]_2^q}^\frac1q+\expecm{[(\nabla\varphi^{n+1},\nabla\sigma^{n+1})]_2^q}^\frac1q\,\lesssim_q\,1,\]
and at critical order $n=\ell$, for all $q<\infty$,
\[\qquad\expecm{[(\varphi^\ell,\sigma^\ell)]_2^q(x)}^\frac1q\,\lesssim_q\,
\left\{\begin{array}{lll}
\langle x\rangle^\frac12&:&\text{$d$ odd},\\
\log(2+|x|)^\frac12&:&\text{$d$ even}.
\end{array}\right.\]
\item\emph{Fluctuation scaling:}
for all $0\le n<\ell$, $g\in C^\infty_c(\R^d)$, and $q<\infty$,
\begin{equation*}
\quad\expec{\Big|\int_{\R^d}g\,(\nabla\varphi^{n+1},\nabla\sigma^{n+1})\Big|^q}^\frac1q+\expec{\Big|\int_{\R^d}g \,(\varphi^{n},\sigma^{n})\Big|^q}^\frac1q
\,\lesssim_q\,\|[g]_2\|_{\Ld^\frac{2d}{d+2n}(\R^d)}.\qedhere
\end{equation*}
\end{enumerate}
\end{theor}

\medskip
As recalled in the introduction, with these correctors at hand, we define higher-order two-scale expansions as
\begin{equation}\label{eq:def-2sc-exp}
\Fc_\e^n[\bar w]\,:=\,\sum_{k=0}^n\e^k\varphi^k_{i_1\ldots i_k}(\tfrac\cdot\e)\nabla^k_{i_1\ldots i_k}\bar w.
\end{equation}
Homogenization theory states that the heterogeneous constitutive law $\nabla w_\e\mapsto\Aa(\frac\cdot\e)\nabla w_\e$ is replaced on large scales by the effective law $\nabla\bar w\mapsto\bar\Aa^1\nabla\bar w$, and that dispersive corrections must be added when looking for finer accuracy,
\begin{equation}\label{eq:const-law-Aepsn}
\nabla\bar w\,\mapsto\,\bar\Ac_\e^n\nabla\bar w\,:=\,\sum_{k=1}^n\e^{k-1}\bar\Aa_{i_1\ldots i_{k-1}}^{k-1}\nabla\nabla^{k-1}_{i_1\ldots i_{k-1}}\bar w,
\end{equation}
in terms of the higher-order effective tensors $\{\bar\Aa^n\}_{1\le n\le\ell}$.
The first-order homogenized equation~\eqref{eq:first-def-ubar}
is then formally replaced by the following corrected version,
\begin{align}\label{eq:homog-formal}
-\nabla\cdot\bar\Ac_\e^n\nabla\bar U_{\e,f}^n=\nabla\cdot f,\qquad\text{in $\R^d$}.
\end{align}
This corrected equation is however ill-posed in general for $n\ge2$, as the symbol may be indefinite, and we shall use the following well-defined proxy.

\begin{defin}[Higher-order homogenized equations]\label{def:high-eq}
For all $1\le n\le\ell$,
the $n$th-order homogenized solution operator is defined as follows for all $f\in C^\infty_c(\R^d)^d$,
\[\nabla\bar \Oc_\e^n[f]\,:=\,\sum_{k=1}^n\e^{k-1}\nabla\tilde u^k_f,\]
where $\nabla\tilde u^1_f:=\nabla\bar u^1_f$ is the unique gradient solution in $\Ld^2(\R^d)^d$ of the first-order homogenized equation,
\[-\nabla\cdot\bar\Aa^1\nabla\bar u^1_f=\nabla\cdot f,\qquad\text{in $\R^d$},\]
and where for $2\le n\le\ell$ we inductively define the $n$th-order correction $\nabla\tilde u^n_f$ as the unique gradient solution in~$\Ld^2(\R^d)^d$ of
\[-\nabla\cdot\bar\Aa^1\nabla\tilde u^n_f=\nabla\cdot\sum_{k=2}^n\bar\Aa^k_{i_1\ldots i_{k-1}}\nabla\nabla^{k-1}_{i_1\ldots i_{k-1}}\tilde u^{n+1-k}_f,\qquad\text{in $\R^d$}.\qedhere\]
\end{defin}

This function $\nabla\bar\Oc_\e^n[f]$ is indeed viewed as a proxy for a solution $\nabla\bar U_{\e,f}^n$ of the $n$th-order homogenized equation~\eqref{eq:homog-formal} since the following identity holds,
\begin{equation*}
-\nabla\cdot\bar\Ac_\e^n\nabla\bar\Oc_\e^n[f]\,=\,\nabla\cdot f
-\nabla\cdot\Big(\sum_{k=2}^n~\sum_{l=n+2-k}^n\,\e^{k+l-2}\,\bar\Aa^k_{i_1\ldots i_{k-1}}\nabla\nabla^{k-1}_{i_1\ldots i_{k-1}}\tilde u^l_f\Big),
\end{equation*}
where the second right-hand side term is an error term of order $O(\e^n)$.
With these definitions, using the algebraic structure of corrector equations together with corrector estimates, the accuracy of the higher-order two-scale expansion is easily justified, cf.~\cite[Theorem~1.6]{Gu-17} and~\cite[Proposition~2.7]{DO1}. Due to the maximal number $\ell=\lceil\frac d2\rceil$ of well-behaved correctors, the accuracy saturates at the optimal order~$O(\e^{d/2})$. Optimality is confirmed as random fluctuations become dominant at that order; see~\cite{DGO1,DO1}.

\begin{prop}[\cite{Gu-17,DO1}]\label{cor:2scale}
For all $1\le n\le\ell$ and $f\in C^\infty_c(\R^d)^d$, the $n$th-order two-scale expansion error satisfies
\begin{multline}\label{eq:2sc-identity-err}
-\nabla\cdot\Aa(\tfrac\cdot\e)\nabla(u_{\e,f}-\Fc^n_\e[\bar\Oc^n_\e[f]])=\nabla\cdot\Big(\sum_{k=2}^n\sum_{l=n+2-k}^n\e^{k+l-2}\,\bar\Aa^k_{i_1\ldots i_{k-1}}\nabla\nabla^{k-1}_{i_1\ldots i_{k-1}}\tilde u^l_f\Big)\\
+\nabla\cdot\big(\e^n(\Aa\varphi^n_{i_1\ldots i_n}-\sigma^n_{i_1\ldots i_n})(\tfrac\cdot\e)\nabla\nabla^n_{i_1\ldots i_n}\bar\Oc_\e^n[f]\big),
\end{multline}
which entails for all $1< p,q<\infty$,
\[\big\|\big[\,\nabla u_{\e,f}- \nabla\Fc_\e^\ell[\bar\Oc_\e^\ell[f]]\,\big]_2\big\|_{\Ld^p(\R^d;\Ld^q(\Omega))}\,\lesssim_{f,p,q}\,\e^\frac d2\times\left\{\begin{array}{lll}
1&:&\text{$d$ odd},\\
\Log^\frac12&:&\text{$d$ even}.
\end{array}\right.\qedhere\]
\end{prop}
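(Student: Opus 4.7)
The plan is to derive~\eqref{eq:2sc-identity-err} as an algebraic identity from the corrector hierarchy of Theorem~\ref{th:cor}, and then to obtain the $\Ld^p(\Ld^q)$-estimate at $n=\ell$ by plugging it into the annealed regularity of Theorem~\ref{th:CZ-ann}.

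First, I would expand $\nabla\Fc^n_\e[\bar w]$ from~\eqref{eq:def-2sc-exp} and apply $\Aa(\cdot/\e)$; regrouping the terms at each scale $\e^{k-1}$ so as to reconstitute the corrected gradient $(\nabla\varphi^k_{i_1\ldots i_k}+\varphi^{k-1}_{i_1\ldots i_{k-1}}\ee_{i_k})(\cdot/\e)$, the flux identity~\eqref{eq:qdef} recasts this combination as $(q^k_{i_1\ldots i_k}+\sigma^{k-1}_{i_1\ldots i_{k-1}}\ee_{i_k}+\bar\Aa^k_{i_1\ldots i_{k-1}}\ee_{i_k})(\cdot/\e)$, hence
\[
\Aa(\tfrac\cdot\e)\nabla\Fc^n_\e[\bar w]\,=\,\bar\Ac^n_\e\nabla\bar w+\sum_{k=1}^n\e^{k-1}(q^k_{i_1\ldots i_k}+\sigma^{k-1}_{i_1\ldots i_{k-1}}\ee_{i_k})(\tfrac\cdot\e)\nabla^k_{i_1\ldots i_k}\bar w+\e^n\Aa(\tfrac\cdot\e)\varphi^n_{i_1\ldots i_n}(\tfrac\cdot\e)\nabla\nabla^n_{i_1\ldots i_n}\bar w.
\]
Taking $\nabla\cdot$, the first summand reproduces the operator in~\eqref{eq:const-law-Aepsn}. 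For the middle block, the skew-symmetry of $\sigma^k$ together with the symmetry of the Hessian kills one piece of the product rule, while $\nabla\cdot q^k=0$ and $\nabla\cdot\sigma^{k-1}=q^{k-1}$ (see~\eqref{eq:sigmadef}) identify the divergence of the $\sigma^{k-1}\ee$-term at scale $\e^{k-1}$ with the negative of the divergence of the $q^{k-1}$-term at scale $\e^{k-2}$; all such pairs cancel for $k<n$, leaving only $\e^{n-1}(q^n_{i_1\ldots i_n})_\e\cdot\nabla\nabla^n_{i_1\ldots i_n}\bar w$, which via $q^n=\nabla\cdot\sigma^n$ and skew-symmetry collapses to the divergence $-\e^n\nabla\cdot[(\sigma^n_{i_1\ldots i_n})_\e\cdot\nabla\nabla^n_{i_1\ldots i_n}\bar w]$. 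Combining with the residual $\Aa\varphi^n$-term yields
\[
\nabla\cdot\Aa(\tfrac\cdot\e)\nabla\Fc^n_\e[\bar w]\,=\,\nabla\cdot\bar\Ac^n_\e\nabla\bar w+\e^n\nabla\cdot\big[(\Aa\varphi^n_{i_1\ldots i_n}-\sigma^n_{i_1\ldots i_n})(\tfrac\cdot\e)\nabla\nabla^n_{i_1\ldots i_n}\bar w\big].
\]
Choosing $\bar w=\bar\Oc^n_\e[f]$, subtracting $-\nabla\cdot\Aa(\cdot/\e)\nabla u_{\e,f}=\nabla\cdot f$, and invoking the identity just after Definition~\ref{def:high-eq} to rewrite $\nabla\cdot f+\nabla\cdot\bar\Ac^n_\e\nabla\bar\Oc^n_\e[f]$ as the explicit double sum in $\tilde u^l_f$ then produces exactly~\eqref{eq:2sc-identity-err}.

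For the norm bound at $n=\ell$, I write the right-hand side of~\eqref{eq:2sc-identity-err} as $\nabla\cdot(h_1+h_2)$ with $h_1$ the deterministic double sum and $h_2=\e^\ell(\Aa\varphi^\ell_{i_1\ldots i_\ell}-\sigma^\ell_{i_1\ldots i_\ell})(\cdot/\e)\nabla\nabla^\ell_{i_1\ldots i_\ell}\bar\Oc^\ell_\e[f]$, and apply Theorem~\ref{th:CZ-ann} (translated to scale $\e$ by an elementary change of variables) to reduce the desired estimate to a bound on $\|[h_1]_{2;\e}\|_{\Ld^p(\R^d;\Ld^{q+\delta}(\Omega))}+\|[h_2]_{2;\e}\|_{\Ld^p(\R^d;\Ld^{q+\delta}(\Omega))}$. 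The constraint $k+l\ge\ell+2$ in $h_1$ gives $\e^{k+l-2}\le\e^\ell\le\e^{d/2}$, and Calder\'on--Zygmund estimates on $\nabla^k\tilde u^l_f$ for $f\in C^\infty_c$ yield $\|h_1\|_{\Ld^p(\R^d)}\lesssim_{f,p}\e^{d/2}$. For $h_2$, the change of variables $y=x/\e$ together with $|\Aa|\le1$ and the corrector estimate of Theorem~\ref{th:cor}(i) bound the random factor in $\Ld^{q+\delta}(\Omega)$ pointwise by $\langle x/\e\rangle^{1/2}$ for odd $d$ and by $\log(2+|x/\e|)^{1/2}$ for even $d$; absorbing the prefactor $\e^\ell$ turns this into $\e^{d/2}(1+|x|^{1/2})$ (odd $d$) or $\e^{d/2}(1+\Log^{1/2}+\log(2+|x|)^{1/2})$ (even $d$), and integrating against $\nabla^{\ell+1}\bar\Oc^\ell_\e[f]$ in $\Ld^p(\R^d;\langle x\rangle^{1/2}dx)$ (again by Calder\'on--Zygmund on the homogenized equations of Definition~\ref{def:high-eq}) closes the estimate with the announced $\Log^{1/2}$-loss in even dimensions.

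The delicate point is the algebraic telescoping: the bookkeeping of indices, of powers of~$\e$, and of the signs produced by skew-symmetry when $\nabla\cdot$ contracts on the ``wrong'' matrix slot of $\sigma^k$ is easy to get wrong. Once~\eqref{eq:2sc-identity-err} is in place, the $\Ld^p(\Ld^q)$-bound is a routine combination of annealed regularity with the corrector growth of Theorem~\ref{th:cor}(i).
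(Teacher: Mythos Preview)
The paper does not actually prove this proposition: it is stated with attribution to~\cite{Gu-17,DO1} and closed with the $\lozenge$ marker without argument. Your sketch reproduces the standard derivation from those references. The algebraic telescoping --- rewriting $\Aa(\nabla\varphi^k+\varphi^{k-1}\ee)$ via~\eqref{eq:qdef}, then using $\nabla\cdot q^k=0$, $\nabla\cdot\sigma^{k-1}=q^{k-1}$, and the skew-symmetry of $\sigma^k$ to collapse the sum --- is exactly the mechanism, and the passage to the $\Ld^p(\Ld^q)$ bound by feeding the right-hand side of~\eqref{eq:2sc-identity-err} into Theorem~\ref{th:CZ-ann} and invoking the critical corrector growth of Theorem~\ref{th:cor}(i) is the intended route.

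Two cosmetic points. First, the statement is written with unit-scale averages $[\cdot]_2$, not $[\cdot]_{2;\e}$; after your change of variables you should record that $\|[\cdot]_2\|_{\Ld^p}\lesssim\|[\cdot]_{2;\e}\|_{\Ld^p}$ (Jensen on the larger ball). Second, in odd dimension the factor $\langle x\rangle^{1/2}$ must be absorbed by $\nabla^{\ell+1}\bar\Oc_\e^\ell[f]$ in $\Ld^p$; this is a weighted Calder\'on--Zygmund estimate for the constant-coefficient equations of Definition~\ref{def:high-eq} (the weight $\langle x\rangle^{p/2}$ is Muckenhoupt $A_p$ for the relevant range of $p$, and $f\in C^\infty_c$), which is routine but should be named rather than folded into ``Calder\'on--Zygmund''.
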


\section{Functional setup}\label{sec:prelim}
In this section, we introduce some functional tools and notation. We start with a reminder on stationarity and on the associated stationary differential calculus on the probability space, as commonly used in stochastic homogenization~\cite{PapaVara,JKO94}, and we further develop the formalism for our needs. Next, we turn to a brief reminder on Malliavin calculus and introduce a new scale of ``refined'' Malliavin--Sobolev spaces. While stationary calculus is generated by spatial translations of stationary fields, Malliavin calculus is generated by changes in the underlying white noise. Both will be crucially combined in our construction of weak correctors in the sequel.

\subsection{Stationary calculus}\label{sec:stat}
In the present Gaussian setting~\eqref{eq:def-A}--\eqref{eq:G-rep}, recall that the probability space $(\Omega,\Pm)$ is chosen to be generated by the underlying white noise $\xi$.
Translations $x\mapsto \xi(\cdot+x)$ then induce an action $T=\{T_x\}_{x\in\R^d}$ of the additive group $(\R^d,+)$ on the space of random variables.
In particular, for $X\in\Sc(\Omega)$, say $X=g(\int_{\R^d}h_1\cdot\xi,\ldots,\int_{\R^d}h_n\cdot\xi)$, we define
\[T_xX=g\Big(\int_{\R^d} h_1\cdot\xi(\cdot+x),\ldots,\int_{\R^d} h_n\cdot\xi(\cdot+x)\Big).\]
As the law of $\xi(\cdot+x)$ does not depend on the shift $x\in\R^d$, the map $T_x$ is an isometry on $\Ld^q(\Omega)$ for all $q$.
Also note that, for any random variable $X$, the map $(x,\omega)\mapsto (T_xX)(\omega)$ is stochastically continuous and jointly measurable on $\R^d\times\Omega$.

In this setting, stationarity is defined as follows: a random field $\psi\in\Ld^1_\loc(\R^d;\Ld^1(\Omega))$ is stationary if there exists a random variable $X\in\Ld^1(\Omega)$ such that $\psi(x,\omega)=(T_xX)(\omega)$. In other words, a random field is stationary if it is covariant under spatial translations: spatial translations of the field amount to spatial translations of the underlying Gaussian field.
For all $q$, this provides a canonical isomorphism between random variables in $\Ld^q(\Omega)$ and stationary random fields in $\Ld^q_\loc(\R^d;\Ld^q(\Omega))$: for $X\in\Ld^q(\Omega)$ we define its stationary extension $X^\sharp(x,\omega)=(T_xX)(\omega)$, and for a stationary field $\psi\in\Ld^q_\loc(\R^d;\Ld^q(\Omega))$ we define $\psi^\flat\in\Ld^q(\Omega)$ such that $\psi(x,\omega)=(T_x\psi^\flat)(\omega)$. Clearly, $(\psi^\flat)^\sharp=\psi$ and $(X^\sharp)^\flat=X$.

A differential calculus on $\Ld^q(\Omega)$ is naturally developed via stationarity.
Indeed, for all~\mbox{$q<\infty$}, the action $T=\{T_x\}_{x\in\R^d}$ is easily checked to constitute a ($d$-parameter) $C_0$-group of isometries on $\Ld^q(\Omega)$,
and we then define the {\it stationary gradient}~$\nabla^\st$ as the generator of this group, which is a densely defined skew-adjoint operator on $\Ld^q(\Omega)$. Its domain is denoted by $W^{1,q}(\Omega)$ and obviously contains $\Sc(\Omega)$. For $q=2$, we write $H^1(\Omega)=W^{1,2}(\Omega)$. 

This stationary gradient can be reinterpreted via the isomorphism between random variables and stationary random fields: as the space of stationary fields in $\Ld^q_\loc(\R^d;\Ld^q(\Omega))$ is identified with $\Ld^q(\Omega)$, the weak spatial gradient $\nabla$ on locally $\Ld^q$ integrable functions turns into a densely defined linear operator on $\Ld^q(\Omega)$, which is easily checked to coincide with the stationary gradient $\nabla^\st$. More precisely, the space $W^{1,q}(\Omega)$ coincides with the space of stationary fields in $W^{1,q}_\loc(\R^d;\Ld^p(\Omega))$, and there holds for all~$X\in W^{1,q}(\Omega)$,
\[(\nabla^\st X)^\sharp=\nabla X^\sharp.\]

In the context of corrector equations, cf.~Theorem~\ref{th:cor}, we often consider linear elliptic equations of the following form, given a stationary field $\psi\in\Ld^2_\loc(\R^d;\Ld^2(\Omega)^d)$,
\[-\nabla\cdot\Aa\nabla w_\psi=\nabla\cdot \psi,\qquad\text{in $\R^d$}.\]
The gradient solution $\nabla w_\psi$ can be uniquely chosen as a stationary field in $\Ld^2_\loc(\R^d;\Ld^2(\Omega)^d)$ with $\expec{\nabla w_\psi}=0$ (see e.g.~\cite[proof of Theorem~2]{PapaVara}), but $w_\psi$ itself cannot be chosen stationary in general due to the absence of a Poincaré inequality in this stationary setting --- which is in turn the very reason why there exist only a finite number of stationary correctors.
We shall abusively use the same notation as in~\eqref{eq:test-defT},
\begin{equation}\label{eq:def-Hstat}
\calH\psi=\nabla w_\psi,
\end{equation}
for the solution operator, or heterogeneous Helmholtz projection, in this stationary setting.
In terms of stationary calculus, the above equation is equivalently written as follows,\footnote{These equations are understood in the following weak sense: there holds $\E[\nabla^\st Y\cdot(\Aa^\flat X_\psi+\psi^\flat)]=0$ for all $Y\in H^1(\Omega)$, and the random vector $X_\psi$ belongs to the closure of the space of stationary gradients, that is, $\expec{X_\psi}=0$ and $\expec{(\nabla_j^\st Y)(X_\psi)_k}=\expec{(\nabla_k^\st Y)(X_\psi)_j}$ for all $j,k$ and all $Y\in H^1(\Omega)$.}
\[\nabla w_\psi=X_\psi^\sharp,\qquad-\nabla^\st\cdot(\Aa^\flat X_\psi)=\nabla^\st\cdot \psi^\flat,\qquad\nabla^\st\times X_\psi=0,\qquad\expec{X_\psi}=0,\]
and we set $\calH^\st \psi^\flat:=(\calH \psi)^\flat=X_\psi$ on $\Ld^2(\Omega)^d$.
In parallel with the annealed $\Ld^p$ regularity in Theorem~\ref{th:CZ-ann},
we prove a corresponding regularity result for this stationary Helmholtz projection in $\Ld^q(\Omega)$ up to a tiny loss of stochastic integrability.

\begin{cor}[Stationary $\Ld^q$ regularity]\label{cor:CZ-stat}
With the above notation, there holds for all $X\in\Ld^\infty(\Omega)$, $1< q<\infty$, and $\delta>0$,
\[\|[\calH X^\sharp]_2^\flat\|_{\Ld^q(\Omega)}\,\lesssim_{q,\delta}\,
\|[X^\sharp]_2^\flat\|_{\Ld^{q+\delta}(\Omega)},\]
where we recall the notation $[X^\sharp]_2^\flat=(\fint_B|X^\sharp|^2)^{1/2}$ for the local quadratic average.
\end{cor}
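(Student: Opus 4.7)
The strategy is to reduce the stationary estimate to the annealed regularity result of Theorem~\ref{th:CZ-ann} via truncation, exploiting stationarity as a spatial-averaging device. I fix a smooth cutoff $\chi_R\in C^\infty_c(B_{2R})$ with $\chi_R=1$ on $B_R$, and form the compactly supported datum $h_R:=\chi_R X^\sharp\in C^\infty_c(\R^d;\Ld^\infty(\Omega)^d)$. Writing $\nabla w_R:=\calH h_R$ for the corresponding ``classical'' Helmholtz projection and $\nabla v:=\calH X^\sharp$ for the stationary one in the sense of~\eqref{eq:def-Hstat}, linearity gives the decomposition
\[
\nabla v\,=\,\nabla w_R+\nabla e_R,\qquad \nabla e_R=\calH\big((1-\chi_R)X^\sharp\big),
\]
where the source of the equation for $e_R$ vanishes identically on $B_R$.

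The key point is that the target norm can be rewritten as a spatial average: since $\nabla v$ is stationary and translations are isometries on $\Ld^q(\Omega)$, the quantity $\|[\nabla v]_2(x)\|_{\Ld^q(\Omega)}$ does not depend on $x$, whence
\[
\|[\nabla v]_2^\flat\|_{\Ld^q(\Omega)}\,=\,|B_R|^{-1/p}\,\|[\nabla v]_2\|_{\Ld^p(B_R;\Ld^q(\Omega))}
\]
for any $1<p<\infty$. Applying the triangle inequality on the right-hand side together with the elementary bound $\|[h_R]_2\|_{\Ld^p(\R^d;\Ld^{q+\delta}(\Omega))}\lesssim R^{d/p}\|[X^\sharp]_2^\flat\|_{\Ld^{q+\delta}(\Omega)}$ (coming from the support of $\chi_R$ and stationarity of $X^\sharp$) and Theorem~\ref{th:CZ-ann} applied to $h_R$, I obtain
\[
\|[\nabla v]_2^\flat\|_{\Ld^q(\Omega)}\,\lesssim_{p,q,\delta}\,\|[X^\sharp]_2^\flat\|_{\Ld^{q+\delta}(\Omega)}+R^{-d/p}\|[\nabla e_R]_2\|_{\Ld^p(B_R;\Ld^q(\Omega))}.
\]

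The main obstacle is to control the error term on the right-hand side uniformly in $R$ (and ideally to show it vanishes as $R\to\infty$). Since $e_R$ is $\Aa$-harmonic on $B_R$, the plan is to dyadically decompose the far-field source as $(1-\chi_R)X^\sharp=\sum_{k\ge1}\phi_k X^\sharp$, with $\phi_k$ a smooth partition of unity supported in the annulus $\{2^{k-1}R\le|x|\le 2^{k+1}R\}$, to apply Theorem~\ref{th:CZ-ann} to each compactly supported piece $\calH(\phi_k X^\sharp)$ (which contributes $(2^kR)^{d/p}\|[X^\sharp]_2^\flat\|_{\Ld^{q+\delta}(\Omega)}$ globally), and then to exploit annealed large-scale regularity (cf.~\cite{AS,AKM-book,GNO-reg}) on the much smaller ball $B_R$, where each such solution is $\Aa$-harmonic, to gain enough geometric decay in $k$ to make the dyadic sum convergent. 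Once this uniform control of the error is established, letting $R\to\infty$ concludes the proof. As a variant, the whole argument can alternatively be phrased directly on the probability space by redoing the stationary analogue of the $\Ld^p$-extrapolation/good-$\lambda$ machinery that underlies Theorem~\ref{th:CZ-ann} itself, the loss $\delta>0$ playing the same role as in the annealed setting.
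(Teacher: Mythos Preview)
Your approach is genuinely different from the paper's, and unfortunately the main line has a real gap.

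\textbf{What the paper does.} The paper does not attempt to reduce the stationary estimate to Theorem~\ref{th:CZ-ann} via truncation. Instead, it works directly on the probability space: after a duality reduction to $q\ge2$, it invokes a quenched large-scale Lipschitz estimate from~\cite[Lemma~16]{BFFO-17} together with moment bounds on the minimal radius $r_*$ to obtain, for $2\le q<d$ and $q_*=\frac{dq}{d-q}$,
\[
\|[\calH X^\sharp]_2^\flat\|_{\Ld^{q_*/\alpha}(\Omega)}\,\lesssim\,\|[X^\sharp]_2^\flat\|_{\Ld^{q_*}(\Omega)}+\|[\calH X^\sharp]_2^\flat\|_{\Ld^q(\Omega)},
\]
and then \emph{iterates} this Sobolev-type step in $q$, starting from the energy estimate at $q=2$. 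There is no spatial cut-off and no far-field error to control.

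\textbf{The gap in your argument.} The heart of your plan is the claim that the dyadic pieces $\nabla u_k:=\calH(\phi_k X^\sharp)$, which are $\Aa$-harmonic on $B_{2^{k-1}R}$, enjoy ``enough geometric decay in $k$'' on $B_R$ from annealed large-scale regularity. They do not. Already for the Laplacian, $|\nabla^2 G(x-y)|\sim|x-y|^{-d}$ integrated against a source of volume $\sim(2^kR)^d$ on the $k$-th annulus gives a contribution of size $O(1)$ on $B_R$, independently of $k$. Large-scale Lipschitz regularity reproduces exactly this: combining the annealed Lipschitz bound on $B_{2^{k-2}R}$ with the global energy estimate yields
\[
\|[\nabla u_k]_2(x)\|_{\Ld^q(\Omega)}\,\lesssim\,(2^kR)^{-d/2}\,\|\phi_kX^\sharp\|_{\Ld^{q+\delta}(\Omega;\Ld^2(\R^d))}\,\lesssim\,\|[X^\sharp]_2^\flat\|_{\Ld^{q+\delta}(\Omega)},
\]
with no decay in $k$; your dyadic sum therefore diverges. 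Any genuine decay would have to come from stochastic cancellations in the stationary source (a CLT-type gain $(2^kR)^{-d/2}$), which is a different and strictly stronger input than large-scale elliptic regularity---and essentially as hard to justify as the corollary itself. Relatedly, the naive bound $\|[\nabla e_R]_2\|_{\Ld^p(B_R;\Ld^q(\Omega))}\le\|[\nabla v]_2\|_{\Ld^p(B_R;\Ld^q(\Omega))}+\|[\nabla w_R]_2\|_{\Ld^p(B_R;\Ld^q(\Omega))}$ feeds the target quantity back into the right-hand side, so the scheme is circular without the decay.

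Your closing ``variant'' (redo the extrapolation machinery directly on $\Omega$) is closer in spirit to what actually works, but it remains a plan rather than a proof; the paper's route via the quenched bound of~\cite{BFFO-17} and iteration in $q$ is the concrete implementation.
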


\begin{proof}
By a duality argument, it suffices to consider the case $2\le q<\infty$.
Appealing to quenched large-scale regularity in form of~\cite[Lemma~9.2]{BFFO-17}, and recalling that in the present Gaussian setting with integrable correlations the so-called minimal radius $r_*$ in~\cite{BFFO-17} satisfies $\expecm{(r_*)^{dq}}\le (Cq)^q$ for all $q<\infty$, see~\cite[Theorem~3]{GNO-reg}, we obtain for all $2\le q<d$ and~$\alpha>1$, setting $q_*:=\frac{dq}{d-q}$,
\[\|[\calH X^\sharp]_2^\flat\|_{\Ld^{\frac{q_*}\alpha}(\Omega)}\,\lesssim\,q_*^3\big(\tfrac{q_*}{\alpha-1}\big)^\frac12\Big(\|[X^\sharp]_2^\flat\|_{\Ld^{q_*}(\Omega)}+\|[\calH X^\sharp]_2^\flat\|_{\Ld^q(\Omega)}\Big).\]
Iterating this inequality on $\calH X^\sharp$ and starting with the energy estimate
\[\|[\calH X^\sharp]_2^\flat\|_{\Ld^2(\Omega)}\,=\,\|\calH^\st X\|_{\Ld^2(\Omega)}\,\lesssim\,\|X\|_{\Ld^2(\Omega)}\,=\,\|[X^\sharp]_2^\flat\|_{\Ld^2(\Omega)},\]
the conclusion follows.
\end{proof}

\subsection{Malliavin calculus}\label{chap:Mall}
We recall some classical notation and tools from Malliavin calculus; we refer to~\cite{Nualart,NP-book} for details.
The underlying white noise $\xi$ in~\eqref{eq:G-rep} is viewed as a random Schwartz distribution: by definition, the collection of linear random variables
\[\xi(h):=\int_{\R^d}h\cdot\xi,\qquad h\in C^\infty_c(\R^d)^\kappa,\]
are jointly Gaussian random variables with expectation $\expec{\xi(h)}=0$ and with covariance
\[\langle\xi(h'),\xi(h)\rangle_{\Ld^2(\Omega)}\,=\,\expec{\xi(h')\xi(h)}\,=\,\int_{\R^d}h'\cdot h\,=\,\langle h',h\rangle_{\Ld^2(\R^d)}.\]
By density, we may define $\xi(h)\in\Ld^2(\Omega)$ for all $h\in\Ld^2(\R^d)^\kappa$, which provides an embedding $\Ld^2(\R^d)^\kappa\hookrightarrow\Ld^2(\Omega)$.
Similarly as $C^\infty_c(\R^d)^\kappa$ is viewed as a model dense subspace of $\Ld^2(\R^d)^\kappa$, we recall the following model subspace of ``smooth and local'' random variables, cf.~\eqref{eq:defRom-0},
\begin{equation*}
\Sc(\Omega)\,:=\,\Big\{g\big(\xi(h_1),\ldots,\xi(h_n)\big)~:~n\in\N,~g\in C_b^\infty(\R^n),~
h_1,\ldots,h_n\in C^\infty_c(\R^d)^\kappa\Big\}.
\end{equation*}
For all $q<\infty$, as this subspace $\Sc(\Omega)$ is dense in $\Ld^q(\Omega)$, it allows to define operators and prove properties on this simpler subspace before extending them to $\Ld^q(\Omega)$.

For a random variable $X\in\Sc(\Omega)$, say $X=g(\xi(h_1),\ldots,\xi(h_n))$, we define its Malliavin derivative $DX\in\Ld^q(\Omega;\Ld^2(\R^d)^\kappa)$ as
\begin{equation}\label{eq:D-expl}
DX:=\,\sum_{j=1}^nh_j\,(\partial_jg)\big(\xi(h_1),\ldots,\xi(h_n)\big),
\end{equation}
and we use the short-hand notation $D_xX:=(DX)(x)\in\Ld^q(\Omega)^\kappa$.
Similarly, for all $k\ge1$, the $k$th Malliavin derivative $D^kX\in\Ld^q(\Omega;(\Ld^2(\R^d)^\kappa)^{\otimes k})$ is given by
\begin{equation*}
D^kX:=\,\sum_{1\le j_1,\ldots,j_k\le n}\big(h_{j_1}\otimes\ldots\otimes h_{j_k}\big)\,(\partial_{j_1\ldots j_k}^kg)\big(\xi(h_1),\ldots,\xi(h_n)\big).
\end{equation*}
This operator $D^k:\Sc(\Omega)\subset\Ld^q(\Omega)\to\Ld^q(\Omega;(\Ld^2(\R^d)^\kappa)^{\otimes k})$ is easily checked to be closable on $\Ld^q(\Omega)$ for any $q<\infty$, and we still denote by $D^k$ its closure. We also set
\begin{equation}\label{eq:def-Dk2}
\|X\|_{\Dm^{k,q}(\Omega)}^q\,:=\,\|X\|_{\Ld^q(\Omega)}^q+\sum_{j=1}^k\|D^jX\|_{\Ld^q(\Omega;(\Ld^2(\R^d)^\kappa)^{\otimes j})}^q,
\end{equation}
and we define the Malliavin--Sobolev space $\Dm^{k,q}(\Omega)$ as the closure of $\Sc(\Omega)\subset\Ld^q(\Omega)$ for this norm. This is also sometimes referred to as the Watanabe--Sobolev space.

Next, we define the divergence operator~$D^*$ as the adjoint of the Malliavin derivative $D$, and we construct the corresponding Ornstein--Uhlenbeck operator
\[\Lc:=D^*D.\]
This operator $\Lc:\Sc(\Omega)\subset\Ld^q(\Omega)\to\Ld^q(\Omega)$ is well-defined, symmetric, and closable. Its closure is still denoted by $\Lc$ and is a non-negative operator with domain $\Dm^{2,q}(\Omega)$. For~$q=2$, it is also self-adjoint.
In the sequel, we will often let the operator $D$ act on functions $w\in\Ld^1_\loc(\R^d;\Dm^{1,2}(\Omega))$ by freezing the space variable, that is, $(D_zw)(x):=D_z(w(x))$, and similarly for $\Lc$.
The following lemma collects some useful properties; although standard, a short proof is included for the reader's convenience.

\begin{lem}\label{lem:Mall}$ $
\begin{enumerate}[(i)]
\item The Ornstein--Uhlenbeck operator $\Lc$ commutes with stationarity, in the sense that $\Lc T_xX=T_x\Lc X$ for all~$x\in\R^d$ and $X\in\Sc(\Omega)$.
\smallskip\item For all $q$, the operator $(1+\Lc)^{-1}$ is a contraction on $\Ld^q(\Omega)$. Moreover, for all $w\in C^\infty_c(\R^d;\Rc(\Omega))$ and all $p,q$, we have $\|[(1+\Lc)^{-1}w]_p\|_{\Ld^q(\Omega)}\le\|[w]_p\|_{\Ld^q(\Omega)}$.
\smallskip\item For all $X,Y\in \Dm^{1,2}(\Omega)$, the following Helffer--Sjöstrand identity holds
\begin{equation*}
\quad\covm YX\,=\,\expec{\langle DY,(1+\Lc)^{-1}D X\rangle_{\Ld^2(\R^d)}},
\end{equation*}
which implies in particular Poincaré's inequality
\begin{equation*}
\quad\var{X}\,\le\,\|DX\|_{\Ld^2(\Omega;\Ld^2(\R^d))}^2.
\qedhere
\end{equation*}
\end{enumerate}
\end{lem}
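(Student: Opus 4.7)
The plan is to treat the three items in order, using the Ornstein--Uhlenbeck semigroup $P_t:=e^{-t\Lc}$ encoded by Mehler's formula
\[P_tX\,=\,\E\big[X\big(e^{-t}\xi+\sqrt{1-e^{-2t}}\,\xi'\big)\,\big|\,\xi\big]\qquad\text{for $X\in\Sc(\Omega)$,}\]
where $\xi'$ is an independent copy of the white noise $\xi$, so that $P_t$ is a genuine Markov operator ($P_t1=1$ and $P_t\ge0$), together with the resolvent identity $(1+\Lc)^{-1}=\int_0^\infty e^{-t}P_t\,dt$. Items~(i)--(ii) will then follow by working at the semigroup level, while~(iii) will be verified on the Wiener chaos.

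For item~(i), the law of $\xi$ is invariant under the shifts $T_x$, hence the Mehler kernel commutes with $T_x$ and $P_tT_x=T_xP_t$ on $\Sc(\Omega)$ for every $t\ge0$. Differentiating at $t=0$ yields $\Lc T_x=T_x\Lc$. Equivalently, for $X=g(\xi(h_1),\ldots,\xi(h_n))\in\Sc(\Omega)$, a direct calculation gives
\[\Lc X\,=\,-\sum_{j,k=1}^n\langle h_j,h_k\rangle_{L^2(\R^d)}(\partial^2_{jk}g)(\xi(h_1),\ldots,\xi(h_n))+\sum_{j=1}^n\xi(h_j)(\partial_jg)(\xi(h_1),\ldots,\xi(h_n)),\]
whose functional form is manifestly preserved under the joint substitution $h_i\mapsto h_i(\cdot-x)$ because $\langle h_j(\cdot-x),h_k(\cdot-x)\rangle=\langle h_j,h_k\rangle$.

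For item~(ii), the scalar $\Ld^q(\Omega)$-contractivity of $P_t$ is immediate from Jensen's inequality applied to the conditional expectation in Mehler's formula, and the very same argument yields the stronger vector-valued bound
\[\|P_tf(\omega)\|_B\,\le\,\E\big[\|f(e^{-t}\xi+\sqrt{1-e^{-2t}}\,\xi')\|_B\,\big|\,\xi\big](\omega)\,=\,P_t(\|f\|_B)(\omega)\]
for any Banach space $B$, whence $\|P_tf\|_{\Ld^q(\Omega;B)}\le\|f\|_{\Ld^q(\Omega;B)}$. Integrating against $e^{-t}dt$ transfers both the scalar and the vector-valued contraction to $(1+\Lc)^{-1}$, and specializing $B=L^p(B(x),dy/|B(x)|)$ at fixed~$x$ while using that $(1+\Lc)^{-1}$ acts pointwise in $x$ yields the desired moving-average inequality.

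For item~(iii), expand $X=\sum_{n\ge0}I_n(f_n)$ and $Y=\sum_{n\ge0}I_n(g_n)$ in Wiener chaos with symmetric square-integrable kernels, and recall the standard identities $\Lc I_n(f_n)=nI_n(f_n)$, $D_zI_n(f_n)=nI_{n-1}(f_n(\cdot,z))$, and $\E[I_n(g_n)I_n(f_n)]=n!\langle g_n,f_n\rangle$. Since $DX$ lives in the $(n-1)$-th chaos, $(1+\Lc)^{-1}$ multiplies it by $1/n$, and a direct computation gives
\[\expecm{\langle DY,(1+\Lc)^{-1}DX\rangle_{L^2(\R^d)}}\,=\,\sum_{n\ge1}n\cdot(n-1)!\,\langle g_n,f_n\rangle\,=\,\sum_{n\ge1}n!\,\langle g_n,f_n\rangle\,=\,\covm YX.\]
Poincar\'e's inequality is the special case $Y=X$, combined with the operator bound $(1+\Lc)^{-1}\le\Id$ on $\Ld^2(\Omega)$. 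No significant obstacle is anticipated; the only mildly delicate point is the vector-valued contractivity used in~(ii), which forces one to exploit the genuine Markov-kernel structure of $P_t$ (via Mehler) rather than the weaker scalar $\Ld^q(\Omega)$-contractivity alone.
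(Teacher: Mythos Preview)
Your proof is correct. Items~(i) and~(ii) follow essentially the same Mehler-formula route as the paper; your explicit mention of the vector-valued (Banach-valued) contractivity of the Markov kernel is exactly what the paper compresses into the sentence ``the same computation holds when inserting local averages.''

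The genuine difference is in item~(iii). The paper argues at the semigroup level: it first derives Poincar\'e's inequality from the representation $\var{X}=2\int_0^\infty e^{-2t}\|e^{-t\Lc}DX\|_{\Ld^2}^2\,dt$ (via the commutation $De^{-t\Lc}=e^{-t}e^{-t\Lc}D$), and then obtains Helffer--Sj\"ostrand by solving $\Lc Z=X$ via Lax--Milgram (which Poincar\'e makes coercive) and integrating by parts. You instead diagonalize everything on the Wiener chaos, where $\Lc$ is multiplication by~$n$ and the identity reduces to a one-line summation; Poincar\'e then follows as a corollary from $(1+\Lc)^{-1}\le\Id$. Your route is more direct and avoids the auxiliary Lax--Milgram step, at the cost of invoking the chaos decomposition machinery; the paper's semigroup argument is slightly less explicit here but has the advantage of transferring verbatim to other Markov-diffusion settings (Poisson, discrete iid) where no chaos expansion is available, which matters since the paper later alludes to such extensions.
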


\begin{proof}
We consider the Ornstein-Uhlenbeck semigroup $\{e^{-t\Lc}\}_{t\ge0}$ and we recall Mehler's formula: choosing an iid copy $\xi'$ of the white noise $\xi$, and denoting by $\E'[\cdot]$ the expectation with respect to $\xi'$, there holds for all $t\ge0$ and $X\in\Sc(\Omega)$,
\begin{equation}\label{eq:Mehler}
e^{-t\Lc}X=\E'[X_t],
\end{equation}
where $X_t$ is obtained by replacing $\xi$ with its interpolation $\xi_t:=e^{-t}\xi+\sqrt{1-e^{-2t}}\xi'$.
This formula is easily justified by explicitly computing the generator of both sides on $\Sc(\Omega)$; see e.g.~\cite[Theorem~2.8.2]{NP-book}.
Based on this formula, there obviously holds $e^{-t\Lc}T_xX=T_xe^{-t\Lc}X$, which proves~(i).

\medskip\noindent
We turn to the proof of~(ii).
Writing
$(1+\Lc)^{-1}X\,=\,\int_0^\infty e^{-t}e^{-t\Lc}X\,dt$,
and using that $\xi_t$ has the same law as $\xi$, which entails
\begin{equation}\label{eq:bnd-etL}
\|e^{-t\Lc}X\|_{\Ld^q(\Omega)}=\|\E'[X_t]\|_{\Ld^q(\Omega)}\le\|X\|_{\Ld^q(\Omega)},
\end{equation}
we find
\[\|(1+\Lc)^{-1}X\|_{\Ld^q(\Omega)}\,\le\,\int_0^\infty e^{-t}\,\|\E'[X_t]\|_{\Ld^q(\Omega)}\,dt\,\le\,\|X\|_{\Ld^q(\Omega)}.\]
The same computation holds when inserting local averages, and the conclusion~(ii) follows.

\medskip\noindent
It remains to prove~(iii), and we start with Poincaré's inequality.
As Mehler's formula~\eqref{eq:Mehler} yields $e^{-t\Lc}X\to\expec{X}$ in $\Ld^2(\Omega)$ as $t\uparrow\infty$, we can write
\[\var{X}\,=\,-\int_0^\infty\partial_t\,\expec{(e^{-t\Lc}X)^2}dt\,=\,2\int_0^\infty\expec{(e^{-t\Lc}X)\Lc(e^{-t\Lc}X)}dt,\]
and thus, integrating by parts with $\Lc=D^*D$,
\[\var{X}\,=\,2\int_0^\infty\|D(e^{-t\Lc}X)\|_{\Ld^2(\R^d;\Ld^2(\Omega))}^2\,dt.\]
Noting that Mehler's formula~\eqref{eq:Mehler} yields
\begin{equation}\label{eq:commut}
De^{-t\Lc}X=e^{-t}e^{-t\Lc}DX,
\end{equation}
and using~\eqref{eq:bnd-etL} again, Poincaré's inequality follows,
\[\var{X}\,=\,2\int_0^\infty e^{-2t}\,\|e^{-t\Lc}DX\|_{\Ld^2(\R^d;\Ld^2(\Omega))}^2\,dt\,\le\,\|DX\|_{\Ld^2(\R^d;\Ld^2(\Omega))}^2.\]
We now turn to the Helffer--Sjöstrand identity.
Let $X,Y\in\Sc(\Omega)$ with $\expec{X}=\expec{Y}=0$.
Poincaré's inequality together with the Lax--Milgram theorem ensures that there exists $Z\in\Dm^{2,2}(\Omega)$ with $X=\Lc Z$.
Integrating by parts with $\Lc=D^*D$, we then find
\[\cov{Y}{X}\,=\,\expec{YX}\,=\,\expec{Y(\Lc Z)}\,=\,\expecm{\langle DY,DZ\rangle_{\Ld^2(\R^d)}}.\]
Noting that identity~\eqref{eq:commut} entails $D\Lc Z=(1+\Lc)DZ$, we deduce $DZ=(1+\Lc)^{-1}DX$, and the conclusion follows.
\end{proof}

\subsection{``Refined'' Malliavin--Sobolev spaces}
We define the following ``refined'' version of Malliavin--Sobolev spaces, which play a key role in the sequel of this work and are a variant of spaces first introduced in~\cite{AKL-16}: for $1\le p,q<\infty$ and integer $k\ge0$, we set
\begin{equation}\label{eq:def-Mpqk}
\|X\|_{\Md^k_{p,q}(\Omega)}\,:=\,\|[X^\sharp]_2^\flat\|_{\Ld^q(\Omega)}+\sum_{j=1}^k\,\|[D^j_0X^\sharp]_2\|_{\Ld^p((\R^d)^j;\Ld^q(\Omega))},
\end{equation}
and we define the refined Malliavin--Sobolev space $\Md^k_{p,q}(\Omega)$ as the closure of $\Sc(\Omega)$ for this norm,
which is easily checked to define a separable Banach space.
By stationarity, note that the norm $\|[D_x^jX^\sharp]_2\|_{\Ld^p((\R^d)^j;\Ld^q(\Omega))}$ is independent of $x\in\R^d$ (see the proof below), and we have simply chosen~$x=0$ in the definition of the norm.
We refer to Remark~\ref{rem:refMS-choice} for the motivation of this precise definition and for the comparison with the slightly different choice in~\cite{AKL-16}.
We start with a few general properties.

\begin{samepage}
\begin{lem}\label{lem:Poinc-MS}$ $
\begin{enumerate}[(i)]
\item For $p=q=2$, the Banach spaces $\Md^k_{2,2}(\Omega)$ and $\Dm^{k,2}(\Omega)$ are isomorphic.
\smallskip\item The Banach space $\Md^k_{p,q}(\Omega)$ embeds into $\Md^l_{r,s}(\Omega)$ whenever $k\ge l$, $p\le r$, $q\ge s$.
\smallskip\item For all $X,Y\in\Sc(\Omega)$ and $1\le p,q\le\infty$, with $\frac1p+\frac1{p'}=1$ and $\frac1q+\frac1{q'}=1$,
\begin{equation*}
\qquad\cov YX
\,\le\,\|[D_0Y^\sharp]_2\|_{\Ld^{p'}(\R^d;\Ld^{q'}(\Omega))}\|[D_0X^\sharp]_2\|_{\Ld^p(\R^d;\Ld^q(\Omega))}.
\end{equation*}
\item The following Poincaré inequality holds for all $X\in\Sc(\Omega)$ and $2\le q<\infty$,
\[\qquad\|[(X-\expec X)^\sharp]_2^\flat\|_{\Ld^{q}(\Omega)}\,\lesssim\, q^\frac12\|[ D_0X^\sharp]_2\|_{\Ld^2(\R^d;\Ld^q(\Omega))}.\qedhere\]
\end{enumerate}
\end{lem}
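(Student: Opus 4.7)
I would proceed in the order (i)–(iv); the first two amount to unfolding the definitions, while (iii) and (iv) are the substantive points and exploit the Malliavin machinery of Section~\ref{chap:Mall} together with a local averaging identity on $\R^d$.

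For (i), I would simply identify the two norms on the dense subspace $\Sc(\Omega)$. Using that $T_x$ is an isometry on $\Ld^2(\Omega)$ together with Fubini gives $\|[X^\sharp]_2^\flat\|_{\Ld^2(\Omega)}^2=\|X\|_{\Ld^2(\Omega)}^2$, and the stationary commutation $D(T_xX)=T_x(D_{\cdot-x}X)$, iterated $j$ times, followed by a change of variables yields $\|[D^j_0X^\sharp]_2\|_{\Ld^2((\R^d)^j;\Ld^2(\Omega))}^2=\|D^jX\|_{\Ld^2(\Omega;\Hf^{\otimes j})}^2$; (i) then follows by closure. Item (ii) is a term-by-term comparison: $q\ge s$ gives $\|\cdot\|_{\Ld^s(\Omega)}\le\|\cdot\|_{\Ld^q(\Omega)}$ by Jensen on the probability space, $k\ge l$ drops a tail of the sum, and $p\le r$ follows by elementary Bochner-space manipulations on the refined norm.

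For (iii), my plan is to start from the Helffer--Sjöstrand identity of Lemma~\ref{lem:Mall}(iii),
\[\cov YX\,=\,\E\Big[\int_{\R^d}D_wY\cdot(1+\Lc)^{-1}D_wX\,dw\Big],\]
and then to insert the averaging identity $\int_{\R^d}f(w)\,dw=\int_{\R^d}\fint_{B(z)}f(w)\,dw\,dz$. After Cauchy--Schwarz in the $w$-integral inside $B(z)$, this produces the pointwise bound
\[\fint_{B(z)}D_wY\cdot(1+\Lc)^{-1}D_wX\,dw\,\le\,[DY]_2(z)\,\big[(1+\Lc)^{-1}DX\big]_2(z).\]
Taking expectation and applying Hölder in $\Omega$ with $(q',q)$, then Hölder in $z$ with $(p',p)$, yields the claimed estimate up to replacing $[DX]_2$ by $[(1+\Lc)^{-1}DX]_2$; I would then finish by invoking Lemma~\ref{lem:Mall}(ii), which says precisely that $(1+\Lc)^{-1}$ is a contraction on $\|[\cdot]_2(z)\|_{\Ld^q(\Omega)}$ at every $z$.

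For (iv), the delicate point is to secure the sharp $q^{1/2}$ scaling, which I expect to be the main obstacle: Helffer--Sjöstrand combined with Hölder would only yield a factor of order~$q$. My plan is in three steps. First, reduce the left-hand side to a pointwise $\Ld^q(\Omega)$ norm: Minkowski's integral inequality for $q\ge2$ combined with the isometry of $T_x$ on $\Ld^q(\Omega)$ gives
\[\|[(X-\E X)^\sharp]_2^\flat\|_{\Ld^q(\Omega)}\,=\,\Big\|\Big(\fint_B|T_x(X-\E X)|^2dx\Big)^{1/2}\Big\|_{\Ld^q(\Omega)}\,\le\,\|X-\E X\|_{\Ld^q(\Omega)}.\]
Second, invoke the sharp Gaussian $\Ld^q$-Poincaré inequality
\[\|X-\E X\|_{\Ld^q(\Omega)}\,\lesssim\,q^{1/2}\,\|DX\|_{\Ld^q(\Omega;\Ld^2(\R^d))},\]
which follows from the Clark--Ocone representation combined with the Burkholder--Davis--Gundy inequality (or equivalently from hypercontractivity of the Ornstein--Uhlenbeck semigroup, which in turn rests on Mehler's formula~\eqref{eq:Mehler}). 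Third, reverse the order of the norms by Minkowski's integral inequality on $\Ld^{q/2}(\Omega)$ together with the averaging identity $\int|D_wX|^2dw=\int\fint_{B(z)}|D_wX|^2dw\,dz$, which produces
\[\|DX\|_{\Ld^q(\Omega;\Ld^2(\R^d))}\,\le\,\|[DX]_2\|_{\Ld^2(\R^d;\Ld^q(\Omega))}\,=\,\|[D_0X^\sharp]_2\|_{\Ld^2(\R^d;\Ld^q(\Omega))}.\]
Composing these three bounds yields (iv). The crux is the sharp Gaussian moment inequality in step two, which cannot be replaced by Poincaré or Helffer--Sjöstrand without losing the correct scaling in $q$, and is what I anticipate to be the most subtle ingredient of the whole lemma.
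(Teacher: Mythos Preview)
Your arguments for (i) and (ii) are essentially the paper's; for (ii) the phrase ``elementary Bochner-space manipulations'' hides the one nontrivial point, namely that the presence of the local average $[\,\cdot\,]_2$ allows a discretization $\|[f]_2\|_{\Ld^p}\simeq(\sum_z\|[f]_2(z)\|^p)^{1/p}$ and hence the $\ell^p\!\hookrightarrow\!\ell^r$ embedding for $p\le r$ --- without the local average there is of course no inclusion $\Ld^p(\R^d)\subset\Ld^r(\R^d)$.

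There is however a genuine gap common to your treatment of (iii) and (iv): you silently identify $[DX]_2(z)=\big(\fint_{B(z)}|D_wX|^2dw\big)^{1/2}$ with $[D_0X^\sharp]_2(x)=\big(\fint_{B(x)}|D_0T_yX|^2dy\big)^{1/2}$. These are \emph{different} random variables for $q\ne2$: writing $D_0T_yX=T_yD_{-y}X$, the second one averages $|D_{-y}X|^2$ evaluated at \emph{different} shifted realizations $T_y\omega$ for each $y\in B(x)$, whereas the first averages $|D_wX|^2$ at a \emph{single} realization. They share the same expectation (hence coincide for $q=2$), but their $\Ld^q(\Omega)$ norms differ in general; a computation with $X=\xi(\mathds1_{[0,1]})^2$ already shows the $\Ld^4$ norms are unequal. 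Consequently, your (iii) proves the correct-looking but different bound with $[DX]_2,[DY]_2$, and in (iv) the asserted equality $\|[DX]_2\|_{\Ld^2(\R^d;\Ld^q(\Omega))}=\|[D_0X^\sharp]_2\|_{\Ld^2(\R^d;\Ld^q(\Omega))}$ is false, so the chain breaks at the last step.

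The missing idea, which the paper supplies, is a stationarity trick \emph{before} averaging: in the Helffer--Sj\"ostrand integrand one inserts the isometry $T_{-x}$, uses that $\Lc$ commutes with $T$ (Lemma~\ref{lem:Mall}(i)), and the identity $T_{-x}D_xX=D_0X^\sharp(-x)$ to rewrite
\[
\cov YX=\E\Big[\int_{\R^d}(D_0Y^\sharp)(x)\,(1+\Lc)^{-1}(D_0X^\sharp)(x)\,dx\Big],
\]
after which local averaging and H\"older give (iii) directly in the right variables. For (iv), the paper does not go through the sharp $\Ld^q$-Poincar\'e at all; it decomposes $\|[X^\sharp]_2^\flat\|_{\Ld^q}^q=\E[([X^\sharp]_2^\flat)^{q-1}]\E[[X^\sharp]_2^\flat]+\mathrm{Cov}\big(([X^\sharp]_2^\flat)^{q-1},[X^\sharp]_2^\flat\big)$, bounds the first term by $\Ld^2$-Poincar\'e, and bounds the covariance by Helffer--Sj\"ostrand together with the same stationarity trick and the chain rule $D_0\big(([X^\sharp]_2)^{q-1}\big)=(q-1)([X^\sharp]_2)^{q-2}D_0([X^\sharp]_2)$, which is what produces the factor $q-1\lesssim q$ and hence the $q^{1/2}$ after absorption. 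Your route via Clark--Ocone/BDG would also yield the right $q^{1/2}$ scaling, but only once the final identification is repaired --- and for that you seem to need the stationarity trick anyway.
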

\end{samepage}

\begin{proof}
We start with the following observation: the definition~\eqref{eq:D-expl} of the Malliavin derivative and the definition of stationary extensions lead to
\begin{equation}\label{eq:rel-D-stat}
T_xD_yX=D_{y+x}T_xX,
\end{equation}
and thus, as $T_x$ is an isometry on $\Ld^q(\Omega)$,
\begin{multline}
\|D_0X^\sharp\|_{\Ld^p(\R^d;\Ld^q(\Omega))}^p\,=\,\int_{\R^d}\expecm{|D_0T_xX|^q}^\frac{p}qdx
\,=\,\int_{\R^d}\expecm{|T_xD_{-x}X|^q}^\frac{p}qdx\\
\,=\,\int_{\R^d}\expecm{|D_{-x}X|^q}^\frac{p}qdx
\,=\,\|DX\|_{\Ld^p(\R^d;\Ld^q(\Omega))}^p.\label{eq:D0psi-Dpsi0}
\end{multline}
We turn to the proof of~(i). For $q=2$, local quadratic averages can be removed by stationarity and we find
\begin{equation*}
\|X\|_{\Md^k_{p,2}(\Omega)}\,=\,\|X\|_{\Ld^2(\Omega)}+\sum_{j=1}^k\| D^j_0X^\sharp\|_{\Ld^p((\R^d)^j;\Ld^2(\Omega))}.
\end{equation*}
Combined with~\eqref{eq:D0psi-Dpsi0}, this becomes
\begin{equation*}
\|X\|_{\Md^k_{p,2}(\Omega)}\,=\,\|X\|_{\Ld^2(\Omega)}+\sum_{j=1}^k\| D^jX\|_{\Ld^p((\R^d)^j;\Ld^2(\Omega))}.
\end{equation*}
In particular, for $p=2$, we deduce that the norms of $\Md^k_{2,2}(\Omega)$ and $\Dm^{k,2}(\Omega)$ are Lipschitz-equivalent, which proves~(i).

\medskip\noindent
Next, for $p\le r$ and $q\ge s$, item~(ii) follows from Jensen's inequality, $\Ld^q(\Omega)\subset\Ld^s(\Omega)$, together with the following discrete $\ell^p-\ell^r$ inequality, taking advantage of local averages,
\begin{multline}\label{eq:ellp-ellr}
\|[D^j_0X^\sharp]_2\|_{\Ld^p((\R^d)^j;\Ld^q(\Omega))}\,\simeq\,\bigg(\sum_{z\in\frac1C\Z^d}\|[D^j_0X^\sharp]_2(z)\|_{\Ld^q(\Omega)}^p\bigg)^\frac1p\\
\,\lesssim\,\bigg(\sum_{z\in\frac1C\Z^d}\|[D^j_0X^\sharp]_2(z)\|_{\Ld^q(\Omega)}^r\bigg)^\frac1r\,\simeq\,\|[D^j_0X^\sharp]_2\|_{\Ld^r((\R^d)^j;\Ld^q(\Omega))}.
\end{multline}

\medskip\noindent
We turn to the proof of~(iii). Starting point is the Helffer--Sjöstrand formula in Lemma~\ref{lem:Mall}.
Using~\eqref{eq:rel-D-stat}, recalling that $\Lc$ commutes with stationarity in the sense of Lemma~\ref{lem:Mall}(i), and that $T$ is a group of isometries, it becomes
\begin{eqnarray*}
\cov YX&=&\expecM{\int_{\R^d}(D_xY)\,(1+\Lc)^{-1}(D_xX)\,dx}\\
&=&\expecM{\int_{\R^d}(T_{-x}D_xY)\,(1+\Lc)^{-1}(T_{-x}D_xX)\,dx}\\
&=&\expecM{\int_{\R^d}(D_0Y^\sharp)(x)\,(1+\Lc)^{-1}(D_0X^\sharp)(x)\,dx},
\end{eqnarray*}
and thus, by Hölder's inequality,
\begin{equation*}
\cov YX
\,\le\,\|[D_0Y^\sharp]_2\|_{\Ld^{p'}(\R^d;\Ld^{q'}(\Omega))}\|[(1+\Lc)^{-1}D_0X^\sharp]_2\|_{\Ld^p(\R^d;\Ld^q(\Omega))}.
\end{equation*}
Appealing to Lemma~\ref{lem:Mall}(ii) then yields the conclusion~(iii).

\medskip\noindent
We turn to the proof of~(iv).
Note that some care is needed to deal with local quadratic averages.
Let $X\in\Sc(\Omega)$ with $\expec X=0$.
Starting point is the decomposition
\begin{equation}\label{eq:split-Lq-aver}
\|[X^\sharp]_2^\flat\|_{\Ld^q(\Omega)}^q\,=\,\expecm{([X^\sharp]_2^\flat)^{q-1}}\expecm{[X^\sharp]_2^\flat}+\covm{([X^\sharp]_2^\flat)^{q-1}}{[X^\sharp]_2^\flat},
\end{equation}
and we separately analyze the two right-hand side terms.
On the one hand, using Jensen's inequality and Poincaré's inequality of Lemma~\ref{lem:Mall}(iii), we find
\begin{eqnarray*}
\expecm{([X^\sharp]_2^\flat)^{q-1}}\expecm{[X^\sharp]_2^\flat}&\le&\|[X^\sharp]_2^\flat\|_{\Ld^q(\Omega)}^{q-1}\|X\|_{\Ld^2(\Omega)}\\
&\le&\|[X^\sharp]_2^\flat\|_{\Ld^q(\Omega)}^{q-1}\|DX\|_{\Ld^2(\Omega;\Ld^2(\R^d))},
\end{eqnarray*}
and thus, by (4.9),
\begin{equation}\label{eq:split-Lq-aver-1}
\expecm{([X^\sharp]_2^\flat)^{q-1}}\expecm{[X^\sharp]_2^\flat}
\,\le\,\|[X^\sharp]_2^\flat\|_{\Ld^q(\Omega)}^{q-1}\|[D_0X^\sharp]_2\|_{\Ld^2(\R^d;\Ld^2(\Omega))}.
\end{equation}
On the other hand, the Helffer--Sjöstrand identity of Lemma~\ref{lem:Mall}(iii) together with identity~\eqref{eq:rel-D-stat} yields
\begin{eqnarray*}
\covm{([X^\sharp]_2^\flat)^{q-1}}{[X^\sharp]_2^\flat}
&=&\expecM{\int_{\R^d} D_y\big(([X^\sharp]_2^\flat)^{q-1}\big)\,(1+\Lc)^{-1} D_y([X^\sharp]_2^\flat)\,dy}\\
&=&\expecM{\int_{\R^d} D_0\big(([X^\sharp]_2)^{q-1}\big)\,(1+\Lc)^{-1} D_0([X^\sharp]_2)},
\end{eqnarray*}
and thus, computing $ D_0\big(([X^\sharp]_2)^{q-1}\big)=(q-1)([X^\sharp]_2)^{q-2} D_0([X^\sharp]_2)$, using Hölder's inequality, appealing to Lemma~\ref{lem:Mall}(ii), and noting that $| D_0([X^\sharp]_2)|\le[ D_0X^\sharp]_2$, we find
\begin{eqnarray*}
\covm{([X^\sharp]_2^\flat)^{q-1}}{[X^\sharp]_2^\flat}\,\le\,(q-1)\|[X^\sharp]_2^\flat\|_{\Ld^q(\Omega)}^{q-2}\big\|[ D_0X^\sharp]_2\big\|_{\Ld^2(\R^d;\Ld^q(\Omega))}^2.
\end{eqnarray*}
Inserting this into~\eqref{eq:split-Lq-aver} together with~\eqref{eq:split-Lq-aver-1}, the claim~(iv) follows.
\end{proof}

Next, as a consequence of Theorem~\ref{th:CZ-ann} and Corollary~\ref{cor:CZ-stat}, we establish a corresponding regularity estimate for the stationary Helmholtz projection $\calH^\st$ in Malliavin--Sobolev spaces; recall the definition~\eqref{eq:def-Hstat} and $(\calH^\st X)^\sharp=\calH X^\sharp$.

\begin{cor}[Stationary $\Md^1_{p,q}$ regularity]\label{cor:CZ-M}
For all $X\in\Sc(\Omega)$, $1<p,q<\infty$, and $\delta>0$,
\begin{equation*}
\|\calH^\st X\|_{\Md^1_{p,q}(\Omega)}\,\lesssim_{p,q,\delta}\,\|X\|_{\Md^1_{p,q+\delta}(\Omega)}.
\qedhere
\end{equation*}
\end{cor}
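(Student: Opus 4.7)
The plan is to split the norm $\|\calH^\st X\|_{\Md^1_{p,q}(\Omega)}$ into its two defining pieces, cf.~\eqref{eq:def-Mpqk}, and to control each by the corresponding piece of $\|X\|_{\Md^1_{p,q+\delta}(\Omega)}$. The first piece, $\|[(\calH^\st X)^\sharp]_2^\flat\|_{\Ld^q(\Omega)} = \|[\calH X^\sharp]_2^\flat\|_{\Ld^q(\Omega)}$, is handled immediately by the stationary $\Ld^q$ regularity of Corollary~\ref{cor:CZ-stat}, with a loss $\delta/2$ in stochastic integrability to keep in reserve. The real work is the second piece $\|[D_0\calH X^\sharp]_2\|_{\Ld^p(\R^d;\Ld^q(\Omega))}$.

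To handle this, I would differentiate the defining elliptic equation $-\nabla\cdot\Aa\nabla w = \nabla\cdot X^\sharp$, with $\nabla w = \calH X^\sharp$, in the white noise direction. Since $\Aa(x) = a_0(G(x))$ with $G = \calC_0\ast\xi$, the Malliavin chain rule gives $D_z\Aa(x) = a_0'(G(x))\calC_0(x-z)$, and a routine computation then yields the commutator identity
\[D_z\calH X^\sharp \,=\, \calH(D_zX^\sharp) + \calH\big((D_z\Aa)\,\calH X^\sharp\big).\]
Setting $z=0$ and applying the annealed $\Ld^p$ regularity of Theorem~\ref{th:CZ-ann} to both right-hand side terms, with a further loss $\delta/4$ in stochastic integrability, reduces matters to controlling
\[\|[D_0X^\sharp]_2\|_{\Ld^p(\R^d;\Ld^{q+\delta/4}(\Omega))} \quad\text{and}\quad \|[(D_0\Aa)\calH X^\sharp]_2\|_{\Ld^p(\R^d;\Ld^{q+\delta/4}(\Omega))}.\]
The first is by construction bounded by $\|X\|_{\Md^1_{p,q+\delta}(\Omega)}$.

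For the second, the point is that the non-stationary factor $D_0\Aa$ is localized by the deterministic weight $\calC_0$: pointwise,
\[[(D_0\Aa)\,\calH X^\sharp]_2(x) \,\le\, \|a_0'\|_{\Ld^\infty}\,[\calC_0]_\infty(x)\,[\calH X^\sharp]_2(x).\]
Since $[\calH X^\sharp]_2$ is stationary, its $\Ld^{q+\delta/4}(\Omega)$ norm is $x$-independent, so this term is bounded by $\|[\calC_0]_\infty\|_{\Ld^p(\R^d)}\,\|[\calH X^\sharp]_2^\flat\|_{\Ld^{q+\delta/4}(\Omega)}$. One final application of Corollary~\ref{cor:CZ-stat} with a loss $\delta/4$ closes the estimate in terms of $\|[X^\sharp]_2^\flat\|_{\Ld^{q+\delta/2}(\Omega)} \le \|X\|_{\Md^1_{p,q+\delta}(\Omega)}$.

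The one technical point to keep track of is the budgeting of the $\delta$-losses in stochastic integrability across the two (and finally three) invocations of annealed or stationary $\Ld^p$ regularity; this is purely bookkeeping. A secondary minor point is the finiteness of $\|[\calC_0]_\infty\|_{\Ld^p(\R^d)}$ for $1<p<\infty$, which follows by interpolation from the assumed $\Ld^1$ bound~\eqref{eq:cov-L1} and the local boundedness of the kernel $\calC_0$. No fundamentally new ingredient is needed beyond Theorem~\ref{th:CZ-ann} and Corollary~\ref{cor:CZ-stat} together with the Malliavin chain rule on the coefficient field.
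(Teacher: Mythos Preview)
Your proposal is correct and follows essentially the same route as the paper's proof: split the $\Md^1_{p,q}$ norm into its two pieces, handle the zeroth-order piece via Corollary~\ref{cor:CZ-stat}, differentiate the equation for $\nabla w_X$ in the Malliavin direction to obtain an elliptic equation for $D_0\nabla w_X$ with right-hand side $D_0X^\sharp+(D_0\Aa)\nabla w_X$, and apply Theorem~\ref{th:CZ-ann}. The only cosmetic difference is that you phrase the differentiated equation as a commutator identity for $\calH$, whereas the paper writes the PDE directly; the content is identical. One small clarification on your closing remark: the ``local boundedness'' of $\calC_0$ is not a separate hypothesis but is itself a consequence of~\eqref{eq:cov-L1}, since for any $z_0$ one has $|\calC_0(z_0)|\le\fint_{B(z_0)}[\calC_0]_\infty\le|B|^{-1}\int_{\R^d}[\calC_0]_\infty$, whence $[\calC_0]_\infty\in\Ld^1\cap\Ld^\infty\subset\Ld^p$.
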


\begin{proof}
Let $X\in\Sc(\Omega)$, $1<p,q<\infty$, and $\delta>0$ be fixed.
Appealing to the stationary~$\Ld^q$ regularity estimate of Corollary~\ref{cor:CZ-stat}, we find
\begin{equation}\label{eq:nabpsi-Lp-1}
\|[(\calH^\st X)^\sharp]_2^\flat\|_{\Ld^q(\Omega)}\,\lesssim_{q,\delta}\,\|[X^\sharp]_2^\flat\|_{\Ld^{q+\delta}(\Omega)},
\end{equation}
and it remains to show
\begin{equation}\label{eq:nabpsi-Lp-1-todo}
\|[D_0(\calH^\st X)^\sharp]_2\|_{\Ld^p(\R^d;\Ld^q(\Omega))}\,\lesssim_{q,\delta}\,\|X\|_{\Md^1_{p,q+\delta}(\Omega)}.
\end{equation}
By definition, $(\calH^\st X)^\sharp=\nabla w_X$ is the unique stationary gradient solution in $\Ld^2_\loc(\R^d;\Ld^2(\Omega))$ of
\[-\nabla\cdot\Aa\nabla w_X=\nabla\cdot X^\sharp,\qquad\text{in $\R^d$},\]
with $\expec{\nabla w_X}=0$,
and we deduce the following relation by taking the Malliavin derivative in this equation,
\[-\nabla\cdot\Aa\nabla D_0w_X=\nabla\cdot D_0 X^\sharp+\nabla\cdot( D_0\Aa)\nabla w_X.\]
The annealed $\Ld^p$ regularity estimate of Theorem~\ref{th:CZ-ann} then gives
\begin{multline*}
\|[D_0(\calH^\st X)^\sharp]_2\|_{\Ld^p(\R^d;\Ld^q(\Omega))}\,=\,\|[\nabla D_0w_X]_2\|_{\Ld^p(\R^d;\Ld^q(\Omega))}\\
\,\lesssim_{p,q,\delta}\,\|[D_0X^\sharp]_2\|_{\Ld^p(\R^d;\Ld^{q+\delta}(\Omega))}+\|[(D_0\Aa)\nabla w_X]_2\|_{\Ld^p(\R^d;\Ld^{q+\delta}(\Omega))}.
\end{multline*}
In view of~\eqref{eq:def-A}--\eqref{eq:G-rep}, we find $|D_0\Aa(x)|\lesssim|\calC_0(x)|$, and the integrability condition~\eqref{eq:cov-L1} then allows to bound the above by
\begin{equation}\label{eq:bnd-DT}
\|[D_0(\calH^\st X)^\sharp]_2\|_{\Ld^p(\R^d;\Ld^q(\Omega))}\,\lesssim_{p,q,\delta}\,\|[ D_0X^\sharp]_2\|_{\Ld^p(\R^d;\Ld^{q+\delta}(\Omega))}+
\|[(\calH^\st X)^\sharp]_2^\flat\|_{\Ld^{q+\delta}(\Omega)}.
\end{equation}
Combining this with~\eqref{eq:nabpsi-Lp-1}, and recalling the definition~\eqref{eq:def-Mpqk} of the norm of $\Md^1_{p,q}(\Omega)$, the claim~\eqref{eq:nabpsi-Lp-1-todo} follows.
\end{proof}

\begin{rem}\label{rem:refMS-choice}
We comment on our definition of refined Malliavin--Sobolev spaces and we compare it with the slightly different choice in~\cite{AKL-16}.
Removing local quadratic averages in the definition~\eqref{eq:def-Mpqk}, and using identity~\eqref{eq:D0psi-Dpsi0}, the norm of $\Md^k_{p,q}(\Omega)$ would be reduced to the following,
\begin{equation*}
\|X\|_{\Ld^q(\Omega)}+\sum_{j=1}^k\|D^jX\|_{\Ld^p((\R^d)^j;\Ld^q(\Omega))},
\end{equation*}
which would then coincide with the choice in~\cite{AKL-16} provided that $\Ld^p((\R^d)^j;\Ld^q(\Omega))$ is further replaced by $\Ld^q(\Omega;\Ld^p((\R^d)^j))$. This makes two differences with~\cite{AKL-16}: we add local quadratic averages and we reverse spatial and probabilistic norms. This choice is designed to ensure the validity of the above regularity result for $\calH^\st$, cf.~Corollary~\ref{cor:CZ-M}.
\end{rem}

\subsection{Schwartz distributions on the probability space}\label{sec:R'om}
We denote by $\Sc'(\Omega)$  the space of continuous linear functionals on $\Sc(\Omega)$, which is viewed as a space of Schwartz-like distributions on the probability space.\footnote{The topology on $\Sc(\Omega)$ is chosen as follows: a sequence $(X_\e)_\e\subset\Sc(\Omega)$ is said to converge to $X$ in~$\Sc(\Omega)$ as $\e\downarrow0$ if there exists $n\ge1$ such that for all $\e$ small enough the element $X_\e$ can be represented as $X_\e=g_\e(\xi(h^1_\e),\ldots,\xi(h^n_\e))$ with $h^j_\e\to h^j$ in $C^\infty_c(\R^d)^\kappa$, $g_\e\to g$ in $C^\infty_b(\R^n)$, and $X=g(\xi(h^1),\ldots,\xi(h^n))$.}
It is easily checked that $\Sc'(\Omega)$, endowed with the weak-* topology, is separable and that $\Sc(\Omega)$ embeds as a dense linear subspace.
For $X\in\Sc'(\Omega)$, the stationary extension $X^\sharp\in C^\infty(\R^d;\Sc'(\Omega))$ and the stationary gradient~$\nabla^\st X\in\Sc'(\Omega)$ are defined by duality via the following relations for all~$Y\in\Sc(\Omega)$,
\begin{eqnarray*}
\langle X^\sharp(x),Y\rangle_{\Sc'(\Omega),\Sc(\Omega)}&:=&\langle X,Y^\sharp(-x)\rangle_{\Sc'(\Omega),\Sc(\Omega)},\\
\langle\nabla^\st X,Y\rangle_{\Sc'(\Omega),\Sc(\Omega)}&:=&-\langle X,\nabla^\st Y\rangle_{\Sc'(\Omega),\Sc(\Omega)}.
\end{eqnarray*}
Conversely, an element $\psi\in C^\infty(\R^d;\Sc'(\Omega))$ is said to be stationary if it coincides with the stationary extension of an element $\psi^\flat\in\Sc'(\Omega)$.
For $X\in\Sc'(\Omega)$ and $Y\in\Sc(\Omega)$, we use for simplicity the abusive notation $\expec{XY}=\langle X,Y\rangle_{\Sc'(\Omega),\Sc(\Omega)}$ for the duality product.

As for usual Schwartz distributions, we may refine the space $\Sc'(\Omega)$ by considering dual Malliavin--Sobolev spaces:
we denote by $(\Md^k_{p,q})'(\Omega)\subset\Sc'(\Omega)$ the dual space of $\Md^k_{p,q}(\Omega)$, that is, the space of continuous linear functionals on $\Md^k_{p,q}(\Omega)$.
For all $1\le p,q<\infty$ and integers $k\ge0$, the space $(\Md^k_{p,q})'(\Omega)$ is weakly-* separable and $\Sc(\Omega)$ embeds as a dense linear subspace.
Such dual spaces were already used in~\cite{AKL-16}, and might also be compared to some extent to the Kondratiev and Hida spaces, see e.g.~\cite{HOUZ-96}.

\section{Weak corrector theory}\label{sec:weak-cor}

This section is devoted to the proof of Theorem~\ref{th:weak-cor0}: while only the first $\ell=\lceil\frac d2\rceil$ correctors can be constructed as stationary fields with bounded moments, see~Theorem~\ref{th:cor}, we show that twice as many stationary correctors can be constructed in a distributional sense.
We do not know whether this is optimal in general.
More precisely, we prove the following result in dual Malliavin--Sobolev spaces, which is the cornerstone of our approach to the Bourgain--Spencer conjecture.

\begin{theor}[Weak correctors]\label{th:weak-cor}$ $
Let $d>1$. Higher-order weak correctors $\{\varphi^n\}_{0\le n\le d}$, flux correctors $\{\sigma^n\}_{0\le n\le d}$, fluxes $\{q^n\}_{1\le n\le d}$, and effective tensors $\{\bar\Aa^n\}_{1\le n\le d}$ are uniquely well-defined iteratively as follows:
\begin{enumerate}[$\bullet$]
\item $\varphi^0:=1$ and for all $1\le n\le d$ we define $\varphi^n:=(\varphi^n_{i_1\ldots i_n})_{1\le i_1,\ldots,i_n\le d}$ where $\varphi^n_{i_1\ldots i_n}$ is the unique distributional solution in $W^{1,\infty}_\loc(\R^d;(\Md^1_{p,q})'(\Omega))$ for some $1<p,q<\infty$ of
\begin{equation}\label{eq:phidef2}
-\nabla\cdot\Aa\nabla\varphi^n_{i_1\ldots i_n}=\nabla\cdot\big((\Aa\varphi^{n-1}_{i_1\ldots i_{n-1}}-\sigma^{n-1}_{i_1\ldots i_{n-1}})\,\ee_{i_n}\big),\qquad\text{in $\R^d$},
\end{equation}
such that $\varphi^n$ is stationary with $\expec{\varphi^n}=0$ for all $n<d$, and such that $\nabla\varphi^d$ is stationary with $\expec{\nabla\varphi^d}=0$ and with anchoring $\int_B\varphi^d=0$.
\smallskip\item $\sigma^0:=0$ and for all $1\le n\le d$ we define $\sigma^n:=(\sigma^n_{i_1\ldots i_n})_{1\le i_1,\ldots,i_n\le d}$ where $\sigma^n_{i_1\ldots i_n}$ is the unique distributional solution in $W^{1,\infty}_\loc(\R^d;(\Md^1_{p,q})'(\Omega)^{d\times d})$ for some $1<p,q<\infty$, with skew-symmetric matrix values, of
\begin{equation*}
-\triangle\sigma^n_{i_1\ldots i_n}=\nabla\times q^n_{i_1\ldots i_n},\qquad\text{in $\R^d$},
\end{equation*}
such that $\sigma^n$ is stationary with $\expec{\sigma^n}=0$ for all $n<d$, and such that $\nabla\sigma^d$ is stationary with $\expec{\nabla\sigma^d}=0$ and with anchoring $\int_B\sigma^d=0$. In particular, it satisfies
\begin{equation*}
\nabla\cdot\sigma^n_{i_1\ldots i_n}=q^n_{i_1\ldots i_n},\qquad\text{in $\R^d$}.
\end{equation*}
\item For all $1\le n\le d$ we define $q^n:=(q^n_{i_1\ldots i_n})_{1\le i_1,\ldots,i_n\le d}$ with $q^n_{i_1\ldots i_n}$
given by
\begin{equation}\label{eq:qdef2}
q^n_{i_1\ldots i_n}:=\Aa\nabla\varphi^{n}_{i_1\ldots i_n}+(\Aa\varphi^{n-1}_{i_1\ldots i_{n-1}}-\sigma^{n-1}_{i_1\ldots i_{n-1}})\,\ee_{i_n}-\bar\Aa_{i_1\ldots i_{n-1}}^{n}\ee_{i_n}.
\end{equation}
\item For all $1\le n\le d$ we define $\bar\Aa^n:=(\bar\Aa^n_{i_1\ldots i_{n-1}})_{1\le i_1,\ldots,i_{n-1}\le d}$ with $\bar\Aa^n_{i_1\ldots i_{n-1}}$ the matrix given by
\begin{equation}\label{eq:baradef2}
\bar\Aa^n_{i_1\ldots i_{n-1}}\ee_{i_n}:=\expecm{\Aa\big(\nabla\varphi^{n}_{i_1\ldots i_n}+\varphi^{n-1}_{i_1\ldots i_{n-1}}\ee_{i_n}\big)}.
\end{equation}
\end{enumerate}
For $ n<\ell=\lceil\frac d2\rceil$, these weak correctors $\varphi^n,\sigma^n$ coincide with the strong correctors in Theorem~\ref{th:cor}.
In addition, the following estimates hold:
\begin{enumerate}[(i)]
\item \emph{Weak corrector estimates:}
for all $1\le n<d$, $X\in\Sc(\Omega)$, $r<\frac dn$, and $\delta>0$,
\begin{equation*}
\qquad\big|\expecm{X(\varphi^n,\sigma^n)(x)}\big|+\big|\expecm{X(\nabla\varphi^{n+1},\nabla\sigma^{n+1})(x)}\big|
\,\lesssim_{r,\delta}\,\|[ D_{0}X^\sharp]_2\|_{\Ld^r(\R^d;\Ld^{2+\delta}(\Omega))},
\end{equation*}
and at critical order $n=d$, for all $X\in\Sc(\Omega)$, $r<\frac{d}{d-\eta}$, and $\eta,\delta>0$,
\begin{equation*}
\qquad\big|\expecm{X(\varphi^d,\sigma^d)(x)}\big|
\,\lesssim_{r,\eta,\delta}\,\langle x\rangle^\eta\| [D_0X^\sharp]_2\|_{\Ld^r(\R^d;\Ld^{2+\delta}(\Omega))}.
\end{equation*}
\item \emph{Weak fluctuation scaling:}
for all $1\le n\le d$, $X\in\Sc(\Omega)$, $g\in C^\infty_c(\R^d)$, $p,r$ with $\frac1p+\frac1r>1+\frac{n-1}d$, and $\delta>0$,
\begin{equation*}
\qquad\bigg|\expecM{X\int_{\R^d}g\,(\nabla\varphi^n,\nabla\sigma^n)}\bigg|
\,\lesssim_{p,r,\delta}\,\|g\|_{\Ld^p(\R^d)} \| [D_0X^\sharp]_2\|_{\Ld^r(\R^d;\Ld^{2+\delta}(\Omega))}.
\end{equation*}
\end{enumerate}
For $n\le\ell=\lceil\frac d2\rceil$, the space $\Ld^{2+\delta}(\Omega)$ can be replaced by $\Ld^{1+\delta}(\Omega)$ in these estimates.
\qedhere
\end{theor}

\begin{rem}[Explicit 1D case]
In dimension $d=1$, the first corrector is explicit, see~Remark~\ref{rems1}(b). It is easily checked to be uniquely defined as the stationary solution~$\varphi^1$ of~\eqref{eq:phidef2} in $W^{1,\infty}_\loc(\R;\Sc'(\Omega))$ with $\E[\varphi^1]=0$, and its explicit formula entails for all $X\in\Sc(\Omega)$, $g\in C^\infty_c(\R^d)$, and $\frac1p+\frac1r\ge1$,
\begin{align*}
\big|\expecmm{X\varphi^1(x)}\big|&~~\lesssim~~\|DX\|_{\Ld^1(\R^d;\Ld^{1}(\Omega))},\\
\bigg|\expecM{X\int_{\R^d}g\,\nabla\varphi^1}\bigg|&~~\lesssim~~\|g\|_{\Ld^p(\R^d)}\|DX\|_{\Ld^r(\R^d;\Ld^1(\Omega))}.
\qedhere
\end{align*}
\end{rem}

\subsection{Periodic approximation}
In order to prove Theorem~\ref{th:weak-cor}, we proceed by periodic approximation.
For $L\ge3$, we replace the covariance function $\calC$ by its $L$-periodization,
\[\calC_L(x)\,:=\,\sum_{z\in L\Z^d}\calC(x+z),\]
we consider the $\R^\kappa$-valued centered stationary Gaussian random field $G_L$ on $\R^d$ with covariance function~$\calC_L$, and we define the associated coefficient field
\begin{equation}\label{eq:def-AL}
\Aa_L(x)\,:=a_0(G_L(x)).
\end{equation}
As the covariance function $\calC_L$ is $L$-periodic, we note that $G_L$ and $\Aa_L$ are $L$-periodic almost surely.
In this periodic setting, using Poincaré's inequality on the periodic cell \mbox{$Q_L=[-\frac L2,\frac L2)^d$}, the associated periodic correctors are obviously well-defined to all orders.

\begin{lem}[Periodized correctors]\label{def:cor}
Let $d\ge1$. Higher-order periodized correctors $\{\varphi^n_L\}_{n\ge0}$, flux correctors $\{\sigma^n_L\}_{n\ge0}$, fluxes $\{q^n_L\}_{n\ge1}$, and effective tensors $\{\bar\Aa^n_L\}_{n\ge1}$ are uniquely well-defined iteratively as follows:
\begin{enumerate}[$\bullet$]
\item $\varphi^0_L:=1$ and for all $n\ge1$ we define $\varphi^n_L:=(\varphi^n_{L;i_1\ldots i_n})_{1\le i_1,\ldots,i_n\le d}$ where $\varphi^n_{L;i_1\ldots i_n}$ is the unique weak solution in $H^1_\per(Q_L;\Ld^2(\Omega))$, with $\fint_{\T_L^d}\varphi^n_{L;i_1\ldots i_n}=0$, of
\begin{equation}\label{eq:phin-Lper}
\textstyle
-\nabla\cdot\Aa_L\nabla\varphi^n_{L;i_1\ldots i_n}=\nabla\cdot\big((\Aa_L\varphi^{n-1}_{L;i_1\ldots i_{n-1}}-\sigma^{n-1}_{L;i_1\ldots i_{n-1}})\,\ee_{i_n}\big),\qquad\text{in $Q_L$}.
\end{equation}
\item $\sigma^0_L:=0$ and for all $n\ge1$ we define $\sigma^n_L:=(\sigma^n_{L;i_1\ldots i_n})_{1\le i_1,\ldots,i_n\le d}$ where $\sigma^n_{L;i_1\ldots i_n}$ is the unique weak solution in $H^1_\per(Q_L;\Ld^2(\Omega)^{d\times d})$, with $\fint_{\T_L^d}{\sigma_{L;i_1\ldots i_n}^n}=0$ and with skew-symmetric matrix values, of
\begin{equation*}
\textstyle
-\triangle\sigma^n_{L;i_1\ldots i_n}=\nabla\times q^n_{L;i_1\ldots i_n},\qquad\text{in $Q_L$}.
\end{equation*}
In particular, it satisfies
\begin{equation}\label{eq:sigman-Lper}
\textstyle
\nabla\cdot\sigma^n_{L;i_1\ldots i_n}=q^n_{L;i_1\ldots i_n},\qquad\text{in $Q_L$}.
\end{equation}
\item For all $n\ge1$ we define $q^n_L:=(q^n_{L;i_1\ldots i_n})_{1\le i_1,\ldots,i_n\le d}$ with $q^n_{L;i_1\ldots i_n}$
given by
\begin{equation}
q^n_{L;i_1\ldots i_n}:=\Aa_L\nabla\varphi^{n}_{L;i_1\ldots i_n}+(\Aa_L\varphi^{n-1}_{L;i_1\ldots i_{n-1}}-\sigma^{n-1}_{L;i_1\ldots i_{n-1}})\,\ee_{i_n}-\bar\Aa_{L;i_1\ldots i_{n-1}}^{n}\ee_{i_n}.
\end{equation}
\item For all $n\ge1$ we define $\bar\Aa^n_L:=(\bar\Aa^n_{L;i_1\ldots i_{n-1}})_{1\le i_1,\ldots,i_{n-1}\le d}$ with $\bar\Aa^n_{L;i_1\ldots i_{n-1}}$ the matrix given by
\begin{equation}\label{eq:bara-Lper}
\bar\Aa^n_{L;i_1\ldots i_{n-1}}\ee_{i_n}:=\fint_{Q_L}{\Aa_L\big(\nabla\varphi^{n}_{L;i_1\ldots i_n}+\varphi^{n-1}_{L;i_1\ldots i_{n-1}}\ee_{i_n}\big)}.
\qedhere
\end{equation}
\end{enumerate}
\end{lem}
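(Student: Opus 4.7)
The plan is a direct induction on $n\ge 0$, exploiting the fact that periodic boundary conditions on $Q_L$ restore the Poincar\'e--Wirtinger inequality that is absent in the stationary setting on $\R^d$ and thereby make every elliptic cell problem uniquely solvable by Lax--Milgram. The base case $n=0$ is built in. So assume inductively that $\varphi^{n-1}_L$ and $\sigma^{n-1}_L$ have been constructed in $H^1_\per(Q_L;\Ld^2(\Omega))$ with vanishing cell mean and satisfy the equations stated at the previous order.

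The first task is to solve~\eqref{eq:phin-Lper} for $\varphi^n_{L;i_1\ldots i_n}$, one multi-index at a time. Since $|\Aa_L|\le1$ almost surely and $\varphi^{n-1}_L,\sigma^{n-1}_L$ are square-integrable on $Q_L\times\Omega$ by induction, the field $(\Aa_L\varphi^{n-1}_{L;i_1\ldots i_{n-1}}-\sigma^{n-1}_{L;i_1\ldots i_{n-1}})\ee_{i_n}$ belongs to $\Ld^2_\per(Q_L;\Ld^2(\Omega)^d)$ and its divergence is an element of $H^{-1}_\per(Q_L;\Ld^2(\Omega))$. Ellipticity of $\Aa_L$ combined with Poincar\'e--Wirtinger on $Q_L$ makes the bilinear form $(u,v)\mapsto \expec{\int_{Q_L}\nabla u\cdot\Aa_L\nabla v}$ coercive on $H^1_\per(Q_L;\Ld^2(\Omega))/\R$, and Lax--Milgram provides the unique zero-mean solution $\varphi^n_{L;i_1\ldots i_n}$.

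Next, define $\bar\Aa^n_L$ via the averaging formula (it is an unambiguous random tensor in the $L$-periodic setting), form $q^n_{L;i_1\ldots i_n}$ by~\eqref{eq:qdef2}, and observe two crucial properties. First, the cell average vanishes almost surely: by construction of $\bar\Aa^n_L$ together with the inductive identity $\fint_{Q_L}\sigma^{n-1}_L=0$, one computes
\[
\fint_{Q_L} q^n_{L;i_1\ldots i_n}\,=\,\fint_{Q_L}\Aa_L\big(\nabla\varphi^n_L+\varphi^{n-1}_L\ee_{i_n}\big)-\fint_{Q_L}\sigma^{n-1}_L\ee_{i_n}-\bar\Aa^n_L\ee_{i_n}\,=\,0.
\]
Second, $\nabla\cdot q^n_{L;i_1\ldots i_n}=0$ distributionally, as this is a direct rewriting of~\eqref{eq:phin-Lper} once one notes that the constant $\bar\Aa^n_L\ee_{i_n}$ contributes nothing to the divergence. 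With these two facts in hand, I would construct $\sigma^n_{L;i_1\ldots i_n}$ componentwise: for each pair $(j,k)$, solve the scalar periodic Poisson problem $-\triangle\sigma^n_{L;i_1\ldots i_n;jk}=\nabla_j q^n_{L;i_1\ldots i_n;k}-\nabla_k q^n_{L;i_1\ldots i_n;j}$ with zero cell mean; the right-hand side, being a curl, has vanishing mean and the problem is solvable. Skew-symmetry of the resulting matrix field is manifest.

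The last step is to verify the second identity in~\eqref{eq:sigman-Lper}. Applying $\nabla_j$ to the Poisson equation and invoking $\nabla\cdot q^n_L=0$ yields $\triangle\big(\nabla\cdot\sigma^n_{L;i_1\ldots i_n}-q^n_{L;i_1\ldots i_n}\big)=0$; both fields have zero cell mean, so this harmonic periodic function must vanish, closing the induction. I do not expect any genuine obstacle here: the whole point of periodization is to make Poincar\'e--Wirtinger available, after which each step becomes a standard exercise in elliptic PDEs on the torus. The only delicate point is the bookkeeping of multi-indices and a careful handling of the sign convention $(\nabla\cdot H)_i=\nabla_j H_{ij}$ for the divergence of a skew-symmetric matrix field, so that the two equations for $\sigma^n_L$ end up with matching signs.
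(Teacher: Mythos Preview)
Your proposal is correct and follows essentially the same route as the paper: induction on $n$, Lax--Milgram with the periodic Poincar\'e inequality for $\varphi^n_L$, solving the scalar periodic Poisson problem $-\triangle\sigma^n_L=\nabla\times q^n_L$, and then deducing $\nabla\cdot\sigma^n_L=q^n_L$ from $\nabla\cdot q^n_L=0$ by the harmonic-with-zero-mean argument. You spell out the verification that $\fint_{Q_L}q^n_L=0$ and the componentwise skew-symmetry more explicitly than the paper does, but the structure of the argument is identical.
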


\begin{proof}
Let indices $i_1,\ldots,i_n$ be fixed and omitted in the notation. If $\varphi_L^{n-1},\sigma_L^{n-1}$ are defined in $H^1_\per(Q_L;\Ld^2(\Omega))$, then the Lax--Milgram theorem together with Poincaré's inequality ensures that the corrector equation~\eqref{eq:phin-Lper} admits a unique solution $\varphi^n_L$ in $H^1_\per(Q_L;\Ld^2(\Omega))$ with $\fint_{Q_L}\varphi_L^n=0$.
Next, the same argument ensures that the flux corrector equation $-\triangle\sigma^n_L=\nabla\times q_L^n$ admits a unique solution $\sigma^n_L$ in $H^1_\per(Q_L;\Ld^2(\Omega))$ with $\fint_{Q_L}\sigma_L^n=0$. Taking the divergence in this flux corrector equation and noting that the corrector equation~\eqref{eq:phin-Lper} entails $\nabla\cdot q_L^n=0$, we find $-\triangle(\nabla\cdot\sigma_L^n)=-\triangle q_L^n$, which proves the relation $\nabla\cdot\sigma_L^n=q_L^n$ since both quantities have vanishing average.
Energy estimates for $\varphi_L^n,\sigma_L^n$ and Poincaré's inequality give
\[L^{-1}\|(\phi_L^n,\sigma_L^n)\|_{\Ld^2(Q_L)}\,\lesssim\,\|(\nabla\phi_L^n,\nabla\sigma_L^n)\|_{\Ld^2(Q_L)}\,\lesssim\,\|(\phi_L^{n-1},\sigma_L^{n-1})\|_{\Ld^2(Q_L)}.\]
The conclusion follows by iteration.
\end{proof}

We may naturally couple the periodized field $G_L$ with $G$ via the underlying white noise~$\xi$ in the representation~\eqref{eq:G-rep}.
Indeed, denoting by $\calC_{0,L}$ the $L$-periodization of~$\calC_0$,
\[\calC_{0,L}(x)\,:=\,\sum_{z\in L\Z^d}\calC_0(x+z),\]
and noting that $\calC_L(x)=\int_{Q_L}\calC_{0,L}(x-y)\,\calC_{0,L}(y)\,dy$,
the following representation formula holds,
\begin{equation}\label{eq:rep-GL}
G_L(x)\,=\,
\int_{Q_L}\calC_{0,L}(x-\cdot)\,\xi.
\end{equation}
In particular, the periodized field $G_L$ is now constructed on the same probability space~$(\Omega,\Pm)$ as $G$, and is $\sigma(\xi|_{Q_L})$-measurable. 
We denote by $\Ld^q(\Omega)$ the subspace of $\sigma(\xi|_{Q_L})$-measurable elements of $\Ld^q(\Omega)$, and we define the dense subspace $\Sc(\Omega)$ as in~\eqref{eq:defRom-0} with test functions~$h_j$ supported in $Q_L$.
Periodized correctors $\varphi^n_L,\sigma^n_L$ are elements of $H^1_\per(Q_L;\Ld^2(\Omega))$.

The periodized Gaussian field $G_L$ is stationary in the sense that its law is invariant under spatial translations, but its stationarity is clearly no longer described via the same action~$T$ as in Section~\ref{sec:stat}.
Denote by $\xi_L:=\sum_{z\in L\Z^d}(\xi\mathds1_{Q_L})(\cdot+z)$ the periodization of the restriction~$\xi|_{Q_L}$.
Since $\xi|_{Q_L}=\xi_L|_{Q_L}$, the translation $x\mapsto \xi_L(\cdot+x)|_{Q_L}$ induces an action $T_L=\{T_{L;x}\}_{x\in\R^d}$ of the additive group $(\R^d,+)$ on the space of $\sigma(\xi|_{Q_L})$-measurable random variables.
As the law of $\xi_L(\cdot+x)|_{Q_L}$ does not depend on the shift $x\in\R^d$, the map $T_{L;x}$ is an isometry on $\Ld^q(\Omega)$ for all $1\le q\le\infty$.
Also note that, for any $\sigma(\xi|_{Q_L})$-measurable random variable $X_L$, the map $(x,\omega)\mapsto (T_{L;x}X_L)(\omega)$ is $L$-periodic in~$x$, and is stochastically continuous and jointly measurable on $\R^d\times\Omega$.

In this periodized setting, stationarity is defined as follows: an $L$-periodic random field $\psi_L\in\Ld^1(Q_L;\Ld^1(\Omega))$ is stationary if there exists a random variable $X_L\in\Ld^1(\Omega)$ such that $\psi_L(x,\omega)=(T_{L;x}X_L)(\omega)$.
As before, for all $q$, this provides a canonical isomorphism between random variables in $\Ld^q(\Omega)$ and $L$-periodic stationary random fields in $\Ld^q(Q_L;\Ld^q(\Omega))$: for $X_L\in\Ld^q(\Omega)$ we define its $L$-periodic stationary extension $X_L^\sharp(x,\omega)=(T_{L;x}X_L)(\omega)$, and for an $L$-periodic stationary field $\psi_L\in\Ld^q(Q_L;\Ld^q(\Omega))$ we define $\psi_L^\flat\in\Ld^q(\Omega)$ such that $\psi_L(x,\omega)=(T_{L;x}\psi_L^\flat)(\omega)$.

\subsection{Weak bounds on periodized correctors}
The following result states that the weak corrector estimates of Theorem~\ref{th:weak-cor} hold for periodized correctors uniformly with respect to the period.
The argument is as follows: to estimate weak expressions like $\expec{X_L\varphi_L^n}$, since the corrector $\varphi^n_L$ roughly takes the form $[(-\nabla\cdot\Aa_L\nabla)^{-1}\nabla]^{n}\Aa_L$, we may migrate half of the iterated heterogeneous Riesz operators $(-\nabla\cdot\Aa_L\nabla)^{-1}\nabla$ to the test function $X_L$, so that in the end we only need strong $\Ld^2$ estimates on $\varphi^{m}_L$ for $m\le\frac n2$, which then allows to define twice as many weak correctors.
We already mentioned this symmetrization trick in a related work with Otto~\cite[Remark~2.5]{DO1} (see also~\cite[Theorem~3.5]{Pouch-19}), but this is the first time that it really plays a key role.

\begin{prop}[Weak bounds on periodized correctors]$ $\label{prop:weak-cor}
\begin{enumerate}[(i)]
\item \emph{Weak corrector estimates:}
for all $1\le n<d$, $X_L\in\Sc(\Omega)$, $r<\frac dn$, and $\delta>0$,
\begin{multline}\label{eq:bnd-phin-1}
\qquad\big|\expecm{X_L(\varphi_L^n,\sigma_L^n)(x)}\big|+\big|\expecm{X_L(\nabla\varphi_L^{n+1},\nabla\sigma_L^{n+1})(x)}\big|\\
\qquad\,\lesssim_{r,\delta}\,\|[ D_{0}X_L^\sharp]_2\|_{\Ld^r(Q_L;\Ld^{2+\delta}(\Omega))},
\end{multline}
and at critical order~$n=d$, for all $X_L\in\Sc(\Omega)$, $\eta,\delta>0$, and $r<\frac{d}{d-\eta}$,
\begin{equation}\label{eq:bnd-phin-2}
\qquad\bigg|\expecM{X_L\Big((\varphi_L^d,\sigma_L^d)(x)-\fint_{B}(\varphi_L^d,\sigma_L^d)\Big)}\bigg|
\,\lesssim_{r,\eta,\delta}\,\langle x\rangle^\eta\|[ D_{0}X_L^\sharp]_2\|_{\Ld^r(Q_L;\Ld^{2+\delta}(\Omega))}.
\end{equation}
\item \emph{Weak fluctuation scaling:}
for all $1\le n\le d$, $X_L\in\Sc(\Omega)$, $g\in C^\infty_\per(Q_L)$, $p,r$ with \mbox{$\frac1p+\frac1r>1+\frac{n-1}d$}, and~$\delta>0$,
\begin{equation}\label{eq:average-phin}
\qquad\bigg|\expecM{X_L\int_{Q_L}g_L\,(\nabla\varphi_L^n,\nabla\sigma_L^n)}\bigg|
\,\lesssim_{p,\delta}\,\|g_L\|_{\Ld^p(Q_L)} \|[ D_{0}X_L^\sharp]_2\|_{\Ld^r(Q_L;\Ld^{2+\delta}(\Omega))}.
\end{equation}
\end{enumerate}
For~$n\le\ell=\lceil\frac d2\rceil$, the space $\Ld^{2+\delta}(\Omega)$ can be replaced by $\Ld^{1+\delta}(\Omega)$ in these estimates.
\end{prop}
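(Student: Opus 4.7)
The strategy is to combine the Helffer--Sj\"ostrand covariance identity of Lemma~\ref{lem:Mall}(iii) with the symmetrization trick hinted at just before the statement: the periodized corrector $\nabla\varphi^n_L$ factors, modulo lower-order terms, as $n$ iterations of the stationary heterogeneous Helmholtz projection $\calH^\st_L$ acting on $\Aa_L$-related data, and by duality we can migrate $\lfloor n/2\rfloor$ of those projections onto the test function $X_L$. This reduces the problem to strong $\Ld^{2+\delta}$ estimates on correctors of order at most $\lceil n/2\rceil$, which hold uniformly in the period $L$ by the periodic version of Theorem~\ref{th:cor} (obtained by the same sensitivity-and-regularity scheme as in~\cite{Gu-17,BFFO-17,DO1} adapted to the torus). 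Since strong correctors are available up to order $\ell=\lceil d/2\rceil$, this allows exactly twice as many weak correctors, up to $n\le d$.

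\textbf{Proof of (i) for $n<d$.} Since $\expecm{\varphi^n_L}=0$, replacing $X_L$ by $X_L-\expec{X_L}$ and applying Lemma~\ref{lem:Mall}(iii) gives
\[
\expecm{X_L\varphi^n_L(x)}\,=\,\expecM{\int_{Q_L}D_yX_L\,(1+\Lc)^{-1}D_y\varphi^n_L(x)\,dy}.
\]
Differentiating the corrector equation~\eqref{eq:phin-Lper} in $D_y$ shows that $D_y\nabla\varphi^n_L$ satisfies an elliptic equation whose source is localized near $y$ through $|D_y\Aa_L(\cdot)|\lesssim|\calC_{0,L}(\cdot-y)|$ (integrable by~\eqref{eq:cov-L1}) and recursive in $D_y\varphi^{n-1}_L,D_y\sigma^{n-1}_L$. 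Rather than iterating this relation directly, I would repeatedly integrate by parts in the bilinear expression above, at each step shifting one factor $\calH^\st_L$ from the corrector side onto the test-function side. After $\lfloor n/2\rfloor$ such shifts, one is left with the $\Ld^2(\Omega_L)$-pairing of a product of $\varphi^{\lceil n/2\rceil}_L$ (or its gradient) with iteratively Riesz-transformed versions of $D_yX_L$; the first factor has uniform-in-$L$ moments via the periodic analog of Theorem~\ref{th:cor}(i), while the second is controlled in $\Md^1_{p,q+\delta}$ by iterating Corollary~\ref{cor:CZ-M}, each application costing one small increment of stochastic integrability. H\"older's inequality together with the contractivity of $(1+\Lc)^{-1}$ on $\Ld^q(\Omega_L)$ from Lemma~\ref{lem:Mall}(ii) then yields~\eqref{eq:bnd-phin-1}, the restriction $r<d/n$ emerging from bounding a convolution of $n$ localized kernels (produced by the successive $D\Aa_L$ inputs) against $D_0X_L^\sharp$.

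\textbf{Critical case $n=d$ and item (ii).} At the threshold, the intermediate corrector $\varphi^\ell_L$ is no longer bounded but grows at most like $\langle x\rangle^{1/2}$ by Theorem~\ref{th:cor}(i); combined with the anchoring $\fint_B\varphi^d_L$ that cancels the constant-in-$\omega$ ambiguity, writing $\varphi^d_L(x)-\fint_B\varphi^d_L$ as a line integral of $\nabla\varphi^d_L$ produces the growth factor $\langle x\rangle^\eta$ in~\eqref{eq:bnd-phin-2} at the cost of shrinking $r$ below $d/(d-\eta)$. For (ii), periodic integration by parts gives $\int g_L\nabla\varphi^n_L=-\int(\nabla g_L)\varphi^n_L$, so averaged gradients gain one order of scaling over unaveraged ones; alternatively, one migrates Riesz transforms jointly onto the pair $(X_L,g_L)$, turning the resulting $\Ld^p$-$\Ld^r$ bookkeeping into a Young convolution inequality with the Sobolev-type scaling $1/p+1/r>1+(n-1)/d$, where the extra $(n-1)/d$ records the Riesz transforms that cannot be moved.

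\textbf{Main obstacle.} The principal difficulty is the tight bookkeeping of stochastic integrability throughout the symmetrization: each invocation of Corollary~\ref{cor:CZ-M} costs a small $\delta$, and closing the estimate requires the $\Md^1_{p,q+\delta}$ regularity to mesh exactly with the strong moment bounds of Theorem~\ref{th:cor}. The specific form~\eqref{eq:def-Mpqk} of the refined Malliavin--Sobolev norm, with local quadratic averages inside $\Ld^p(\R^d;\Ld^q(\Omega))$ rather than the reverse order of~\cite{AKL-16}, is tailored precisely to make this loop close. A secondary point is the parallel treatment of the flux corrector $\sigma^n_L$, whose equation~\eqref{eq:sigman-Lper} is a constant-coefficient Poisson problem driven by $q^n_L$, so it is handled by a direct variant of the same scheme; the critical case $n=d$ for $\sigma^d_L$ is dealt with by the same anchoring argument as for $\varphi^d_L$.
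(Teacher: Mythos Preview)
Your plan has the right ingredients---symmetrization, Helffer--Sj\"ostrand, annealed regularity on iterated Helmholtz projections---but the order of execution diverges from the paper in a way that creates unnecessary difficulty.

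The paper proves~(ii) first and then deduces~(i) from it, not the reverse. The reason is that the averaged quantity $\expecmm{X_L\int_{Q_L}g_L\,\nabla\varphi^n_L}$ rewrites cleanly by stationarity as a pairing $\expecmm{(X_L^\sharp\ast_L g_L)^\flat\,(\nabla\varphi^n_L)^\flat}$ on $\Ld^2(\Omega_L)$, and the corrector gradient admits the operator identity $(\nabla\varphi^n_L,\nabla\sigma^n_L)=(\calK_L\calR_L)^{n-1}\calK_L 1$ for a composite operator $\calK_L$ built from the Helmholtz projections $\calH_L,\calH_L^\circ$ and multiplication by $\Aa_L$. Adjoints of $\calK_L^\st,\calR_L^\st$ then move $n-\ell$ factors onto the test side \emph{before} any Malliavin derivative is taken; only afterward is Lemma~\ref{lem:Poinc-MS}(iii) applied, giving $D_0$-norms on \emph{both} sides. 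The corrector-side factor $\|[D_0(\calK_L\calR_L)^{\ell-1}\calK_L 1]_2\|_{\Ld^{p'}(Q_L;\Ld^{q'}(\Omega_L))}$ is controlled not by invoking strong moment bounds on $\varphi^{\lceil n/2\rceil}_L$ as you propose, but by iterating annealed estimates for $\calK_L$ and a Hardy--Littlewood--Sobolev bound for $\calR_L$ directly on the Malliavin derivative; this is where the exponent bookkeeping $\tfrac1p+\tfrac1r>1+\tfrac{n-1}d$ comes from, via $n-1$ applications of HLS each costing $\tfrac1d$. Young's inequality on $X_L^\sharp\ast_L g_L$ then finishes~(ii).

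With~(ii) in hand, (i) follows by writing $\varphi^n_L(x)=\varphi^n_L(x)-\fint_{Q_L}\varphi^n_L$ as an integral over scales $t\in[0,L]$ of $\fint_{Q_t(x)}\tfrac{x-y}{t}\cdot\nabla\varphi^n_L(y)\,dy$ and applying~(ii) with suitable test functions $g_L$; the integral over $t\ge 1$ converges precisely when $r<\tfrac dn$. The critical case $n=d$ uses the same idea with anchoring at $\fint_B$ and an auxiliary potential $\chi_L$ solving $-\triangle\chi_L=|B|^{-1}\mathds1_B-L^{-d}$, whose $\Ld^p$ norm provides the $\langle x\rangle^\eta$ growth. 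Your proposal to apply Helffer--Sj\"ostrand first to the pointwise quantity $\expecmm{X_L\varphi^n_L(x)}$ and then shift operators inside $D_y\varphi^n_L(x)$ is workable in principle but loses the clean adjoint structure on $\Ld^2(\Omega_L)$ and makes the spatial integrability bookkeeping substantially harder.
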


\begin{proof}
We split the proof into six steps.

\medskip
\step1 Useful short-hand notation.\\
Given an operator $T=(T_\alpha)_{1\le\alpha\le n}:\Ld^2(Q_L)\to\Ld^2(Q_L)^n$, understood as acting via $Tg=(T_\alpha g)_{1\le \alpha\le n}$,
and given another operator $T'=(T'_\alpha)_{1\le\alpha\le n'}:\Ld^2(Q_L)\to\Ld^2(Q_L)^{n'}$, we define their cartesian product as $(T,T')=(T_1,\ldots,T_n,T_1',\ldots,T_n'):\Ld^2(Q_L)\to\Ld^2(Q_L)^{n+n'}$, and their composition as $TT'=(T_\alpha T_\beta')_{1\le \alpha\le n,1\le \beta\le n'}:\Ld^2(Q_L)\to\Ld^2(Q_L)^{nn'}$.
In particular, this means that we do not perform matrix contractions when composing matrix-valued operators: for instance, letting $\calH_{L}:=\nabla(-\nabla\cdot\Aa_L\nabla)^{-1}\nabla$, and viewing $\Aa_L$ as a multiplication operator, the product operator $\calH_L\Aa_L$ stands for $\calH_L\Aa_L=(\calH_{L;ij}\Aa_{L;kl})_{1\le i,j,k,l\le d}$.
When computing norms, we use sup-norms on product spaces: more precisely, for $T=(T_\alpha)_{1\le\alpha\le n}$ and $g\in\Ld^2(Q_L)$, we define
\[|Tg|:=\max_{1\le \alpha\le n}|T_\alpha g|,\]
hence for instance
\[|\calH_L\Aa_Lg|=\max_{1\le i,j,k,l\le d}|\calH_{L;ij}\Aa_{L;kl}g|.\]
Similarly, for a vector field $h\in\Ld^2(Q_L)^m$, we define $Th=(T_\alpha h_\beta)_{1\le \alpha\le n,1\le \beta \le m}\in\Ld^2(Q_L)^{nm}$ and
\[|Th|:=\max_{1\le\alpha\le n,1\le \beta\le m}|T_\alpha h_\beta|.\]
Finally, given $h\in\Ld^2(Q_L)^m$, $f\in\Ld^2(Q_L)^p$, and an operator $T:\Ld^2(Q_L)\to\Ld^2(Q_L)^n$, an identity of the form $f=Th$ will be understood as the existence of a matrix $M\in\R^{p\times nm}$ such that $f=M(Th)$. For instance, inverting the periodic Laplacian in the equation for the flux corrector, cf.~\eqref{eq:sigman-Lper},
\[-\triangle\sigma_{L;i_1,\ldots,i_n}^n=\nabla\times q_{L;i_1,\ldots,i_n}^n,\]
we can write $\sigma_{L}^n=\calR_Lq_L^n$ in terms of the Riesz operator $\calR_L:=\triangle^{-1}\nabla$, as the curl simply amounts to applying a suitable projection.
This short-hand notation will prove particularly convenient when analyzing the hierarchy of corrector problems.

\medskip
\step2 Symmetrization: for all $n\ge\ell$, $X_L\in\Sc(\Omega)$, $g_L\in C^\infty_\per(Q_L)$, and~$1\le p,q\le \infty$, we have, using the short-hand notation of Step~1,
\begin{multline}\label{eq:share}
\bigg|\expecM{X_L\int_{Q_L}g_L\,(\nabla\varphi_L^n,\nabla\sigma_L^n)}\bigg|
\,\lesssim\,\big\|\big[ D_0(\calR_L\calK_L^*)^{n-\ell}({X_L^\sharp\ast_L g_L})\big]_2\big\|_{\Ld^p(Q_L;\Ld^q(\Omega))}\\
\times\big\|\big[ D_{0}(\calK_L \calR_L)^{\ell-1}\calK_L1\big]_2\big\|_{\Ld^{p'}(Q_L;\Ld^{q'}(\Omega))},
\end{multline}
where $\ast_L$ stands for the convolution of $L$-periodic functions,
and where we have defined the linear operators
\begin{eqnarray}
\calK_L&:=&\big(\calH_L^\circ,\calH_L,\calH_L^\circ\Aa_L,\calH_L\Aa_L,\calH_L^\circ\Aa_L\calH_L,\calH_L^\circ\Aa_L\calH_L\Aa_L),\label{eq:def-barTL}\\
\calK_L^*&:=&\big(\calH_L^\circ,\calH_L^*,\Aa_L^*\calH^\circ_L,\Aa_L^*\calH_L^*,\calH_L^*\Aa_L^*\calH_L^\circ,\Aa_L^*\calH_L^*\Aa_L^*\calH_L^\circ),\nonumber
\end{eqnarray}
in terms of the following Helmholtz and Riesz operators, which act on $L$-periodic functions and take values in functions with vanishing average,
\begin{gather*}
\calH_L:=\nabla(-\nabla\cdot\Aa_L\nabla)^{-1}\nabla,\qquad
\calH_L^*:=\nabla(-\nabla\cdot\Aa_L^*\nabla)^{-1}\nabla,\\
\calH^\circ_L:=\nabla(-\triangle)^{-1}\nabla,\qquad \calR_L:=\triangle^{-1}\nabla,
\end{gather*}
where $\Aa_L^*$ stands for the pointwise transpose of $\Aa_L$.
We emphasize that we do not share evenly the number of Riesz operators on $X_L^\sharp\ast_Lg_L$ and on $1$ in~\eqref{eq:share}: this is critical as we need to have as little as possible such operators on the test function $X_L^\sharp\ast_L g_L$ in order to obtain the best norm.

\medskip\noindent
We turn to the proof of~\eqref{eq:share} and start by using stationarity in form of
\begin{eqnarray}
\expecM{X_L\int_{Q_L}g_L\,(\nabla\varphi_L^n,\nabla\sigma_L^n)}&=&\expecM{\Big(\int_{Q_L}X_L^\sharp(-x)\, g_L(x)\,dx\Big)(\nabla\varphi_L^{n},\nabla\sigma_L^{n})^\flat}\nonumber\\
&=&\expecm{({X_L^\sharp\ast_L g_L})^\flat(\nabla\varphi_L^n,\nabla\sigma_L^n)^\flat}.\label{eq:rewr-stat-gphin}
\end{eqnarray}
In terms of the operators $\calH_L^\circ,\calH_L,\calR_L$,
the equations for higher-order periodized correctors, flux correctors, and fluxes in Definition~\ref{def:cor} take on the following form,
\begin{eqnarray*}
\nabla\varphi_L^1&=&\calH_L\Aa_L1,\\
\nabla\sigma_L^1&=&\big(\calH_L^\circ\Aa_L\nabla\varphi_L^1\,,\,\calH_L^\circ\Aa_L1\big),
\end{eqnarray*}
and for $n\ge2$,
\begin{eqnarray*}
\nabla\varphi_L^n&=&\big(\calH_L\Aa_L\calR_L\nabla\varphi_L^{n-1}\,,\,\calH_L\calR_L\nabla\sigma_L^{n-1}\big),\\
\nabla\sigma_L^n&=&\big(\calH_L^\circ\Aa_L\nabla\phi_L^n\,,\,\calH_L^\circ\Aa_L\calR_L\nabla\phi_L^{n-1}\,,\,\calH_L^\circ\calR_L \nabla\sigma_L^{n-1}\big).
\end{eqnarray*}
Iterating these identities yields for all $n\ge1$,
\[(\nabla\varphi_L^n,\nabla\sigma_L^n)=(\calK_L\calR_L)^{n-1}\calK_L1.\]
Recall that we use here the notation of Step~1: in particular, we do not perform matrix contractions when composing operators, and these identities for correctors are understood up to applying some constant matrix to the right-hand side.
Inserting this result into~\eqref{eq:rewr-stat-gphin}, and denoting by $\calK_L^\st, \calR^\st_L$ the corresponding lifted operators on $\Ld^2(\Omega)$, we find
\[\bigg|\expecM{X_L\int_{Q_L}g_L\,(\nabla\varphi_L^n,\nabla\sigma_L^n)}\bigg|\,\lesssim\,\big|\expecm{({X_L^\sharp\ast_L g_L})^\flat(\calK_L^\st \calR^\st_L)^{n-1}\calK_L^\st1}\big|,\]
or equivalently, using adjoints,
\[\bigg|\expecM{X_L\int_{Q_L}g_L\,(\nabla\varphi_L^n,\nabla\sigma_L^n)}\bigg|\,\lesssim\,\big|\E\big[{\big(( \calR^{\st}_L\calK_L^{\st,*})^{n-\ell}({X_L^\sharp\ast_L g_L})^\flat\big)\big((\calK_L^\st \calR^\st_L)^{\ell-1}\calK_L^\st1\big)}\big]\big|.\]
Now appealing to Lemma~\ref{lem:Poinc-MS}(iii) (restricted to $\Sc(\Omega)$), the claim~\eqref{eq:share} follows.

\medskip
\step3 Annealed estimates: for all $X_L\in\Sc(\Omega)$, $1<p,q<\infty$, and $\delta>0$,
\begin{eqnarray}
\|[D_{0}\calK_LX_L^\sharp]_2\|_{\Ld^p(Q_L;\Ld^{q}(\Omega))}&\lesssim_{p,q,\delta}&\|[D_{0}X_L^\sharp]_2\|_{\Ld^{p\wedge2}(Q_L;\Ld^{(q\vee2)+\delta}(\Omega))}+|\expec{X_L}|,\qquad\label{eq:TL-bnd}\\
\|[D_{0}\calK_L^*X_L^\sharp]_2\|_{\Ld^p(Q_L;\Ld^{q}(\Omega))}&\lesssim_{p,q,\delta}&\|[D_{0}X_L^\sharp]_2\|_{\Ld^{p\wedge2}(Q_L;\Ld^{(q\vee2)+\delta}(\Omega))},\nonumber
\end{eqnarray}
and for all $\frac{d}{d-1}<p<\infty$ and $1\le q\le\infty$,
\begin{equation}\label{eq:UL-bnd}
\|[D_{0}\calR_LX_L^\sharp]_2\|_{\Ld^p(Q_L;\Ld^{q}(\Omega))}\,\lesssim_p\,\|[ D_{0}X_L^\sharp]_2\|_{\Ld^\frac{dp}{d+p}(Q_L;\Ld^{q}(\Omega))}.
\end{equation}
By the definition~\eqref{eq:def-barTL} of $\calK_L,\calK_L^*$, the result~\eqref{eq:TL-bnd} follows from the following three estimates, for all $1<p,q<\infty$ and $\delta>0$,
\begin{eqnarray}
\|[ D_{0}\calH_L^\circ X_L^\sharp]_2\|_{\Ld^p(Q_L;\Ld^{q}(\Omega))}&\lesssim_{p,q}&\|[ D_{0}X_L^\sharp]_2\|_{\Ld^p(Q_L;\Ld^{q}(\Omega))},\label{eq:TL0-bnd}\\
\|[ D_{0}(\Aa_LX_L^\sharp)]_2\|_{\Ld^p(Q_L;\Ld^{q}(\Omega))}&\lesssim_{p,q}&\|[ D_{0}X_L^\sharp]_2\|_{\Ld^{p\wedge2}(Q_L;\Ld^{q\vee2}(\Omega))}+|\expec{X_L}\!|,\label{eq:aL-bnd}\\
\|[ D_{0}\calH_LX_L^\sharp]_2\|_{\Ld^p(Q_L;\Ld^{q}(\Omega))}&\lesssim_{p,q,\delta}&\|[ D_{0}X_L^\sharp]_2\|_{\Ld^{p\wedge2}(Q_L;\Ld^{(q+\delta)\vee2}(\Omega))}.\label{eq:TL+-bnd}
\end{eqnarray}
We start with the proof of~\eqref{eq:UL-bnd}. As the convolution kernel for the Riesz transform $\calR_L$ is bounded by $x\mapsto C|(x)_L|^{1-d}$, where we have set $(x)_L:=x\mod Q_L$,
this estimate follows from the Hardy--Littlewood--Sobolev inequality in the form
\begin{eqnarray*}
\lefteqn{\|[ D_{0}\calR_LX_L^\sharp]_2\|_{\Ld^p(Q_L;\Ld^q(\Omega))}\,=\,\|[\calR_L D_{0}X_L^\sharp]_2\|_{\Ld^p(Q_L;\Ld^{q}(\Omega))}}\\
&\qquad\lesssim&\bigg(\int_{Q_L}\Big(\int_{Q_L}|(x-y)_L|^{1-d}\,\|[D_{0}X_L^\sharp]_2(y)\|_{\Ld^{q}(\Omega)}\,dy\Big)^pdx\bigg)^\frac1p\\
&\qquad\lesssim_p&\|[ D_{0}X_L^\sharp]_2\|_{\Ld^\frac{dp}{d+p}(Q_L;\Ld^{q}(\Omega))}.
\end{eqnarray*}
We turn to the proof of~\eqref{eq:TL0-bnd}. In view of Banach-valued Fourier multiplier theorems, e.g.~in form of the extrapolation result in~\cite[Theorem~3.15]{Kunstmann-Weis-04}, this follows from the maximal $\Ld^p$ regularity for the Helmholtz projection~$\calH_L^\circ$ with values in $\Ld^q(\Omega)$.

\medskip\noindent
We turn to the proof of~\eqref{eq:aL-bnd}.
In view of~\eqref{eq:def-AL} and~\eqref{eq:rep-GL}, we find $|D_{0}\Aa_L(x)|\lesssim |\calC_{0,L}(x)|$,
and the integrability condition~\eqref{eq:cov-L1} yields $\int_{Q_L}[\calC_{0,L}]_\infty\le\int_{\R^d}[\calC_{0}]_\infty\le1$. We deduce
\begin{eqnarray*}
\lefteqn{\|[D_{0}(\Aa_LX_L^\sharp)]_2\|_{\Ld^p(Q_L;\Ld^{q}(\Omega))}}\\
&\lesssim&\|[ D_{0}X_L^\sharp]_2\|_{\Ld^p(Q_L;\Ld^{q}(\Omega))}+\|[( D_{0}\Aa_L)X_L^\sharp]_2\|_{\Ld^p(Q_L;\Ld^{q}(\Omega))}\\
&\lesssim&\|[ D_{0}X_L^\sharp]_2\|_{\Ld^p(Q_L;\Ld^{q}(\Omega))}+\|[X_L^\sharp]_2^\flat\|_{\Ld^{q}(\Omega)}.
\end{eqnarray*}
Decomposing $\|[X_L^\sharp]_2^\flat\|_{\Ld^{q}(\Omega)}\le \|[(X_L-\expec{X_L})^\sharp]_2^\flat\|_{\Ld^{q}(\Omega)}+|\expec{X_L}\!|$,
appealing to Poincaré's inequality in form of~Lemma~\ref{lem:Poinc-MS}(iv) (restricted to $\Sc(\Omega)$), and using the discrete $\ell^p-\ell^r$ inequality as in~\eqref{eq:ellp-ellr},
the claim~\eqref{eq:aL-bnd} follows.

\medskip\noindent
It remains to establish~\eqref{eq:TL+-bnd}. We start from~\eqref{eq:bnd-DT} in the proof of Corollary~\ref{cor:CZ-M} (restricted to $\Sc(\Omega)$), in form of
\begin{equation*}
\|[D_{0}\calH_LX_L^\sharp]_2\|_{\Ld^p(Q_L;\Ld^q(\Omega))}
\,\lesssim_{p,q,\delta}\,\|[ D_{0}X_L^\sharp]_2\|_{\Ld^p(Q_L;\Ld^{q+\delta}(\Omega))}+\|[\calH_LX_L^\sharp]_2^\flat\|_{\Ld^{q+\delta}(\Omega)}.
\end{equation*}
Noting that $\calH_LX_L^\sharp=\calH_L(X_L-\expec{X_L})^\sharp$, appealing to the stationary $\Ld^q$ regularity estimate of Corollary~\ref{cor:CZ-stat}, and using Poincaré's inequality in form of Lemma~\ref{lem:Poinc-MS}(iv), the last right-hand side term is bounded by
\begin{eqnarray*}
\|[\calH_LX_L^\sharp]_2^\flat\|_{\Ld^{q+\delta}(\Omega)}&\lesssim_{q,\delta}&\|[(X_L-\expec{X_L})^\sharp]_2^\flat\|_{\Ld^{q+2\delta}(\Omega)}\\
&\lesssim_{q,\delta}&\|[ D_{0}X_L^\sharp]_2\|_{\Ld^2(Q_L;\Ld^{(q+2\delta)\vee2}(\Omega))},
\end{eqnarray*}
and the claim~\eqref{eq:TL+-bnd} follows.

\medskip
\step4 Weak fluctuation scaling~\eqref{eq:average-phin}.\\
We start with the case $\ell<n\le d$.
Starting point is~\eqref{eq:share} with $q=2$, that is, for any~$1\le p\le\infty$,
\begin{multline}\label{eq:share-bis}
\bigg|\expecM{X_L\int_{Q_L}g_L\,(\nabla\varphi_L^n,\nabla\sigma_L^n)}\bigg|
\,\lesssim\,\big\|\big[ D_0(\calR_L\calK_L^*)^{n-\ell}({X_L^\sharp\ast_L g_L})\big]_2\big\|_{\Ld^p(Q_L;\Ld^2(\Omega))}\\
\times\big\|\big[ D_{0}(\calK_L \calR_L)^{\ell-1}\calK_L1\big]_2\big\|_{\Ld^{p'}(Q_L;\Ld^{2}(\Omega))},
\end{multline}
and we turn to the estimation of the two right-hand side factors.
On the one hand, for all~$1<p<\frac{d}{\ell-1}$ (which ensures $\frac{d(p'\wedge2)}{d+(\ell-1)(p'\wedge2)}>1$),
an iterative use of~\eqref{eq:TL-bnd} and~\eqref{eq:UL-bnd} yields
\begin{equation}\label{eq:estim-1fact}
\big\|\big[ D_{0}(\calK_L \calR_L)^{\ell-1}\calK_L1\big]_2\big\|_{\Ld^{p'}(Q_L;\Ld^{2}(\Omega))}\,\lesssim_{p}\,1.
\end{equation}
On the other hand, for all $1<p<\infty$ with $\frac{dp}{d+p}>1$, the estimate~\eqref{eq:UL-bnd} gives
\begin{multline*}
\big\|\big[D_0(\calR_L\calK_L^*)^{n-\ell}({X_L^\sharp\ast_L g_L})\big]_2\big\|_{\Ld^p(Q_L;\Ld^2(\Omega))}\\
\,\lesssim_{p}\,\big\|\big[ D_0\calK_L^*(\calR_L\calK_L^*)^{n-\ell-1}({X_L^\sharp\ast_L g_L})\big]_2\big\|_{\Ld^\frac{dp}{d+p}(Q_L;\Ld^2(\Omega))},
\end{multline*}
and thus, noting that the condition $p<\frac{d}{\ell-1}$ implies $\frac{dp}{d+p}<2$, an iterative use of~\eqref{eq:TL-bnd} and~\eqref{eq:UL-bnd} leads us to the following: for all $1<p<\frac{d}{\ell-1}$ with $\frac{dp}{d+(n-\ell)p}>1$, and all $\delta>0$,
\begin{equation*}
\big\|\big[D_0(\calR_L\calK_L^*)^{n-\ell}({X_L^\sharp\ast_L g_L})\big]_2\big\|_{\Ld^p(Q_L;\Ld^2(\Omega))}
\,\lesssim_{p,\delta}\,\|[ D_0({X_L^\sharp\ast_L g_L})]_2\|_{\Ld^\frac{dp}{d+(n-\ell)p}(Q_L;\Ld^{2+\delta}(\Omega))}.
\end{equation*}
Inserting these estimates back into~\eqref{eq:share-bis}, we deduce for all $1<p<\frac{d}{\ell-1}$ with $\frac{dp}{d+(n-\ell)p}>1$, and all $\delta>0$,
\begin{equation*}
\bigg|\expecM{X_L\int_{Q_L}g_L\,(\nabla\varphi_L^n,\nabla\sigma_L^n)}\bigg|
\,\lesssim_{p,\delta}\,\|[ D_0({X_L^\sharp\ast_L g_L})]_2\|_{\Ld^\frac{dp}{d+(n-\ell)p}(Q_L;\Ld^{2+\delta}(\Omega))}.
\end{equation*}
Letting $r=\frac{dp}{d+(n-\ell)p}$, this can be reformulated as follows: for all $1<r<\frac{d}{n-1}$ and $\delta>0$,
\begin{equation}\label{eq:bnd-aver-nabla-Lper}
\bigg|\expecM{X_L\int_{Q_L}g_L\,(\nabla\varphi_L^n,\nabla\sigma_L^n)}\bigg|
\,\lesssim_{r,\delta}\,\|[ D_0({X_L^\sharp\ast_L g_L})]_2\|_{\Ld^r(Q_L;\Ld^{2+\delta}(\Omega))}.
\end{equation}
Next, writing $D_0({X_L^\sharp\ast_L g_L})=g_L\ast_L(D_{0}X_L^\sharp)$ and appealing to Young's convolution inequality, the conclusion~\eqref{eq:average-phin} follows for $\ell<n\le d$.

\medskip\noindent
We turn to the case $n\le\ell$, which is simpler and is essentially already contained in~\cite[Lemma~7.1]{DO1}.
More precisely, it suffices to avoid the symmetrization trick of Step~1 in that case: we replace~\eqref{eq:share-bis} by
\begin{multline*}
\bigg|\expecM{X_L\int_{Q_L}g_L\,(\nabla\varphi_L^n,\nabla\sigma_L^n)}\bigg|
\,\lesssim\,\|[D_{0}({X_L^\sharp\ast_L g_L})]_2\|_{\Ld^p(Q_L;\Ld^q(\Omega))}\\
\times\|[D_{0}(\calK_L \calR_L)^{n-1}\calK_L1]_2\|_{\Ld^{p'}(Q_L;\Ld^{q'}(\Omega))}.
\end{multline*}
By an iterative use of~\eqref{eq:TL-bnd} and~\eqref{eq:UL-bnd}, arguing as for~\eqref{eq:estim-1fact}, we find for all $1<p<\frac{d}{n-1}$ and $q>1$,
\begin{equation}\label{eq:bnd-aver-nabla-Lper-bis}
\bigg|\expecM{X_L\int_{Q_L}g_L\,(\nabla\varphi_L^n,\nabla\sigma_L^n)}\bigg|
\,\lesssim_p\,\|[D_{0}({X_L^\sharp\ast_L g_L})]_2\|_{\Ld^p(Q_L;\Ld^q(\Omega))}.
\end{equation}
Writing again $D_0({X_L^\sharp\ast_L g_L})=g_L\ast_L(D_{0}X_L^\sharp)$, appealing to Young's convolution inequality, and setting $q=1+\delta>1$, the conclusion~\eqref{eq:average-phin} follows for $1\le n\le \ell$ with $\Ld^{2+\delta}(\Omega)$ replaced by $\Ld^{1+\delta}(\Omega)$.

\medskip
\step5 Weak corrector estimates~\eqref{eq:bnd-phin-1}.\\
Let $1\le n<d$. The bound on $\nabla(\varphi_L^{n+1},\sigma_L^{n+1})$ already follows from~\eqref{eq:bnd-aver-nabla-Lper} and~\eqref{eq:bnd-aver-nabla-Lper-bis} up to replacing~$g_L$ by a Dirac measure, and it remains to estimate $(\varphi_L^{n},\sigma_L^{n})$ itself.
In view of the anchoring $\fint_{Q_L}(\varphi^n_L,\sigma_L^n)=0$, we may decompose
\begin{eqnarray}
(\varphi_L^n,\sigma_L^n)(x)&=&(\varphi_L^n,\sigma_L^n)(x)-\fint_{Q_L}(\varphi_L^n,\sigma_L^n)\nonumber\\
&=&\int_0^L\Big(\fint_{Q_t(x)}\tfrac{x-y}t\cdot(\nabla\varphi_L^n,\nabla\sigma_L^n)(y)\,dy\Big)\,dt,\label{eq:decomp-phix0L}
\end{eqnarray}
and we separately analyze the contribution of the integral for $t\in[0,1]$ and for $t\in[1,L]$.
On the one hand, appealing to~\eqref{eq:average-phin} with $p=1$, we find for all $r<\frac{d}{n-1}$ and $\delta>0$,
\begin{eqnarray}
\lefteqn{\expecM{X_L\int_0^1\Big(\fint_{Q_t(x)}\tfrac{x-y}t\cdot(\nabla\varphi_L^n,\nabla\sigma_L^n)(y)\,dy\Big)dt}}\nonumber\\
&\qquad\lesssim_{r,\delta}&\|[D_{0}X_L^\sharp]_2\|_{\Ld^r(Q_L;\Ld^{2+\delta}(\Omega))}\int_0^1\fint_{Q_t(x)}\big|\tfrac{x-y}t\big|\,dy\,dt\nonumber\\
&\qquad\le&\|[D_{0}X_L^\sharp]_2\|_{\Ld^r(Q_L;\Ld^{2+\delta}(\Omega))}.\label{eq:bnd-phix-01}
\end{eqnarray}
On the other hand, appealing again to~\eqref{eq:average-phin}, we obtain for all $p,r$ with $\frac1p+\frac1r>1+\frac{n-1}d$, and $\delta>0$,
\begin{multline*}
\expecM{X_L\int_1^L\Big(\fint_{Q_t(x)}\tfrac{x-y}t\cdot(\nabla\varphi_L^n,\nabla\sigma_L^n)(y)\,dy\Big)dt}\\
\lesssim_{p,r,\delta}\,\|[D_{0}X_L^\sharp]_2\|_{\Ld^r(Q_L;\Ld^{2+\delta}(\Omega))}\int_1^L t^{-d(1-\frac1p)}\,dt,
\end{multline*}
and thus, for all $r<\frac dn$ and $\delta>0$, choosing $p>\frac{d}{d-1}$ such that $\frac1p+\frac1r>1+\frac{n-1}d$,
\begin{equation*}
\expecM{X_L\int_1^L\Big(\fint_{B_t(x)}\tfrac{x-y}t\cdot(\nabla\varphi_L^n,\nabla\sigma_L^n)(y)\,dy\Big)dt}
\,\lesssim_{r,\delta}\,\|[D_{0}X_L^\sharp]_2\|_{\Ld^r(Q_L;\Ld^{2+\delta}(\Omega))}.
\end{equation*}
Combining this with~\eqref{eq:decomp-phix0L} and~\eqref{eq:bnd-phix-01}, the conclusion~\eqref{eq:bnd-phin-1} follows.
In the case~$n\le\ell$, as in~\eqref{eq:average-phin}, the space $\Ld^{2+\delta}(\Omega)$ can be replaced by $\Ld^{1+\delta}(\Omega)$.

\medskip
\step6 Weak sublinearity estimate~\eqref{eq:bnd-phin-2} for $(\varphi^d_L,\sigma_L^d)$.\\
In view of the anchoring $\fint_B(\varphi_L^d,\sigma_L^d)=0$, we can decompose
\begin{eqnarray}
\lefteqn{(\varphi_L^d,\sigma_L^d)(x)-\fint_{B}(\varphi_L^d,\sigma_L^d)}\nonumber\\
&=&\Big((\varphi_L^d,\sigma_L^d)(x)-\fint_{B(x)}(\varphi_L^d,\sigma_L^d)\Big)+\Big(\fint_{B(x)}(\varphi_L^d,\sigma_L^d)-\fint_{B}(\varphi_L^d,\sigma_L^d)\Big)\nonumber\\
&=&\int_0^1\Big(\fint_{B_t(x)}\tfrac{x-y}t\cdot(\nabla\varphi_L^d,\nabla\sigma_L^d)(y)\,dy\Big)\,dt+\Big(\fint_{B(x)}(\varphi_L^d,\sigma_L^d)-\fint_{B}(\varphi_L^d,\sigma_L^d)\Big),\qquad\label{eq:decomp-phix+d}
\end{eqnarray}
and we analyze the two right-hand side terms separately.
The first one is estimated as in~\eqref{eq:bnd-phix-01}, for all $r<\frac{d}{d-1}$ and $\delta>0$,
\begin{equation}\label{eq:bnd-phix-01+d}
\expecM{X_L\int_0^1\Big(\fint_{B_t(x)}\tfrac{x-y}t\cdot(\nabla\varphi_L^d,\nabla\sigma_L^d)(y)\,dy\Big)dt}
\,\lesssim_{r,\delta}\,\|[D_{0}X_L^\sharp]_2\|_{\Ld^r(Q_L;\Ld^{2+\delta}(\Omega))}.
\end{equation}
We turn to the second right-hand side term in~\eqref{eq:decomp-phix+d}.
Denoting by $\chi_L\in H^1_\per(Q_L)$ the unique periodic mean-zero solution of
\[-\triangle\chi_{L}=\frac{\mathds1_{B}}{|B|}-L^{-d},\qquad\text{in $Q_L$},\]
and setting $\nabla\chi_{x,L}:=\nabla\chi_L(\cdot-x)-\nabla\chi_L$,
we can write
\[\expecM{X_L\Big(\fint_{B(x)}(\varphi_L^d,\sigma_L^d)-\fint_{B}(\varphi_L^d,\sigma_L^d)\Big)}\,=\,\expecM{X_L\int_{Q_L}\nabla\chi_{x,L}\cdot(\nabla\varphi_L^d,\nabla\sigma_L^d)}.\]
Appealing to~\eqref{eq:average-phin}, we deduce for all $p,r$ with $\frac1p+\frac1r>1+\frac{d-1}d$, and $\delta>0$,
\begin{multline}\label{eq:bnd-phix-02+d}
\expecM{X_L\Big(\fint_{B(x)}(\varphi_L^d,\sigma_L^d)-\fint_{B}(\varphi_L^d,\sigma_L^d)\Big)}\\
\,\lesssim_{p,r,\delta}\,\|\nabla\chi_{x,L}\|_{\Ld^p(Q_L)}\|[D_{0}X_L^\sharp]_2\|_{\Ld^r(Q_L;\Ld^{2+\delta}(\Omega))}.
\end{multline}
Noting that a direct computation yields
\[\|\nabla\chi_{x,L}\|_{\Ld^p(\R^d)}\,\lesssim\,\left\{\begin{array}{lll}
1&:&p>\frac{d}{d-1},\\
\log(2+|x|)^{1-\frac1d}&:&p=\frac{d}{d-1},\\
\langle x\rangle^{d(\frac1p+\frac1d-1)}&:&p<\frac{d}{d-1},
\end{array}\right.\]
and combining~\eqref{eq:decomp-phix+d}, \eqref{eq:bnd-phix-01+d}, and~\eqref{eq:bnd-phix-02+d}, the conclusion~\eqref{eq:bnd-phin-2} easily follows.
In the case~$n\le\ell$, as in~\eqref{eq:average-phin}, the space $\Ld^{2+\delta}(\Omega)$ can be replaced by $\Ld^{1+\delta}(\Omega)$.
\end{proof}

\subsection{Proof of Theorem~\ref{th:weak-cor}}\label{sec:concl-approx}
It remains to pass to the infinite-period limit in the periodized corrector estimates of Proposition~\ref{prop:weak-cor}, and to show that the limiting correctors are indeed uniquely defined in a weak sense.
We start with the following uniqueness statement in dual Malliavin--Sobolev spaces.

\begin{lem}[Uniqueness]\label{lem:unique}
If $\psi$ belongs to $W^{1,\infty}_\loc(\R^d;(\Md^1_{p,q})'(\Omega))$ for some $1< p,q<\infty$, if $\nabla\psi$ is stationary with $\expec{\nabla\psi}=0$, and if the equation $-\nabla\cdot\Aa\nabla\psi=0$ is satisfied in $\R^d$ in the distributional sense, then there holds $\nabla\psi=0$.
\end{lem}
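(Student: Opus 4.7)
The plan is to translate the equation into the stationary frame on $(\Omega,\Pm)$ and then to exploit a Helmholtz-type orthogonality in dual Malliavin--Sobolev spaces. First, I set $X := (\nabla\psi)^\flat \in (\Md^1_{p,q})'(\Omega)^d$. The hypotheses translate to $\E[X] = 0$, to the distributional curl-free identity $\nabla^\st_i X_j = \nabla^\st_j X_i$ (inherited from $X^\sharp=\nabla\psi$ being a spatial gradient), and to the stationary PDE $\nabla^\st\cdot(\Aa^\flat X) = 0$ in a dual sense. It then suffices to prove $\langle X, Y\rangle = 0$ for every test $Y \in \Sc(\Omega)^d$.

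For such a $Y$, I apply the Papanicolaou--Varadhan Helmholtz decomposition of $\Ld^2(\Omega)^d$: there exists a unique stationary gradient $\xi \in V := \overline{\{\nabla^\st W : W \in \Sc(\Omega)\}}^{\Ld^2(\Omega)^d}$ solving $\E[(\Aa^{\flat,T}\xi - Y)\cdot\xi'] = 0$ for all $\xi' \in V$. Setting $\tilde Y := Y - \Aa^{\flat,T}\xi$ gives $\nabla^\st\cdot\tilde Y = 0$ in the distributional sense. The stationary $\Md^1_{p,q}$-regularity of Corollary~\ref{cor:CZ-M} ensures that $\xi$, hence $\tilde Y$, lies in $\Md^1_{p,q}(\Omega)^d$ up to a trivial loss of integrability exponent, so all pairings against $X$ are well defined. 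Splitting
\begin{equation*}
\langle X, Y\rangle \,=\, \langle \Aa^\flat X,\, \xi\rangle + \langle X,\, \tilde Y\rangle,
\end{equation*}
the first term vanishes by the equation $\nabla^\st\cdot(\Aa^\flat X) = 0$ together with the density of stationary gradients in $V$.

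For the second term, I construct the analogue of the flux corrector $S$ from Theorem~\ref{th:cor}(ii) associated with $\tilde Y$: solve $-\triangle S^\sharp_{ij} = \nabla_i\tilde Y^\sharp_j - \nabla_j\tilde Y^\sharp_i$ in $\R^d$ with skew-symmetric matrix values and stationary gradient, so that $\nabla\cdot S^\sharp = \tilde Y^\sharp$ follows from the solenoidality of $\tilde Y$. Integrating by parts in the stationary frame and combining the curl-freeness of $X$ with the skew-symmetry of $S$ then yields, under Einstein's convention,
\begin{equation*}
\langle X_i, \nabla^\st_j S^\flat_{ij}\rangle \,=\, -\langle \nabla^\st_j X_i, S^\flat_{ij}\rangle \,=\, -\langle \nabla^\st_i X_j, S^\flat_{ij}\rangle \,=\, \langle X_j, \nabla^\st_i S^\flat_{ij}\rangle \,=\, -\langle X_i, \nabla^\st_j S^\flat_{ij}\rangle,
\end{equation*}
so that $\langle X, \tilde Y\rangle = 0$, concluding the argument.

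The main obstacle will be the construction of the vector potential $S$ with enough Malliavin--Sobolev regularity to be paired against the distribution $X \in (\Md^1_{p,q})'(\Omega)^d$: this requires a stationary Calder\'on--Zygmund-type estimate for the constant-coefficient Riesz transforms on the probability space, analogous to Corollary~\ref{cor:CZ-M} but for $-\triangle$ in place of $-\nabla\cdot\Aa\nabla$, and the exponent bookkeeping between the various Malliavin--Sobolev spaces will be delicate.
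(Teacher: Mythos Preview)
Your strategy via a stationary Helmholtz decomposition is natural, but the second term runs into a genuine obstruction: the vector potential $S$ cannot in general be constructed as a stationary element of $\Md^1_{p,q}(\Omega)$, nor even of $\Ld^2(\Omega)$. The equation $-\triangle S^\sharp=\nabla\times\tilde Y^\sharp$ determines $\nabla S$ as a stationary field via a Calder\'on--Zygmund estimate, but recovering $S$ itself requires inverting $-\triangle^\st$ on $\Ld^2(\Omega)$, and this is exactly where the absence of a Poincar\'e inequality in the stationary setting bites---the same mechanism that limits the number of stationary flux correctors in Theorem~\ref{th:cor}. What you need is not a Riesz \emph{transform} but a Riesz \emph{potential} $(-\triangle^\st)^{-1}\nabla^\st$, and this operator is unbounded. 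Without $S^\flat\in\Md^1_{p,q}(\Omega)$ your integration-by-parts chain $\langle X_i,\nabla^\st_j S^\flat_{ij}\rangle=-\langle\nabla^\st_j X_i,S^\flat_{ij}\rangle$ cannot be justified, so this is not an ``exponent bookkeeping'' issue but a structural one.

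A related gap appears in the first term: you invoke density of $\{\nabla^\st W:W\in\Sc(\Omega)\}$ in the $\Ld^2$-closure $V$, but the pairing $\langle\Aa^\flat X,\xi\rangle$ lives in the $(\Md^1_{p,q})'$--$\Md^1_{p,q}$ duality, so what is needed is approximation of $\xi$ by such gradients in the $\Md^1_{p,q}$ topology, and this does not follow from Corollary~\ref{cor:CZ-M} alone. The paper's proof handles both difficulties at once by a different route. It first treats the constant-coefficient case $\Aa=\Id$ via a Liouville argument (for fixed $Y\in\Sc(\Omega)$ the scalar function $x\mapsto\E[Y\nabla\psi(x)]$ is bounded and harmonic, hence constant, hence zero by ergodicity), and then bootstraps this into the key density statement: every curl-free mean-zero element of $(\Md^1_{p,q})'(\Omega)^d$ is a weak-$*$ limit of $\nabla^\st X_n$ with $X_n\in\Sc(\Omega)$. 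Finally it tests the equation against the adjoint solution $(\nabla w_Y)^\flat$ of $-\nabla\cdot\Aa^*\nabla w_Y=\nabla\cdot Y^\sharp$, which by Corollary~\ref{cor:CZ-M} lies in $\Md^1_{p,q}(\Omega)$ for \emph{all} $1<p,q<\infty$. This converts an arbitrary test vector $Y$ directly into a stationary gradient with no solenoidal remainder, so no vector potential is ever needed.
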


\begin{proof}
Let $1<p,q<\infty$ be fixed. We split the proof into three steps.

\nopagebreak\medskip
\step1
Special case $\Aa=\Id$: If $\psi$ belongs to $W^{1,\infty}_\loc(\R^d;(\Md^1_{p,q})'(\Omega))$, if $\nabla\psi$ is stationary with $\expec{\nabla\psi}=0$, and if the equation $-\triangle\psi=0$ is satisfied in $\R^d$ in the distributional sense, then there holds $\nabla\psi=0$.

\medskip\noindent
Given $X\in\Sc(\Omega)$, we consider the function $u_X=\expec{X\psi}\in W^{1,\infty}_{\loc}(\R^d)$.
As $\nabla\psi$ is stationary, we can write
\begin{equation}\label{eq:rep-nabuX}
\nabla u_X(x)=\expec{X\nabla\psi(x)}=\expecm{(T_{-x}X)(\nabla\psi)^\flat}.
\end{equation}
As $X\in\Sc(\Omega)$, note that this identity actually ensures $u_X\in C^{\infty}(\R^d)$.
For $(\nabla\psi)^\flat\in(\Md^1_{p,q})'(\Omega)$, as the norm of $T_{-x}X$ in $\Md^1_{p,q}(\Omega)$ is bounded independently of~$x$, we also deduce from~\eqref{eq:rep-nabuX} that $\nabla u_X$ is uniformly bounded. As the relation $-\triangle\psi=0$ implies that $u_X$ is harmonic, we deduce that $\nabla u_X$ is constant.
In particular, identity~\eqref{eq:rep-nabuX} then yields for all $R>0$,
\[\textstyle\nabla u_X(x)=\fint_{B_R}\nabla u_X=\expecm{(\fint_{B_R}X^\sharp)(\nabla\psi)^\flat}.\]
Noting that $\fint_{B_R}X^\sharp$ converges to $\expec{X}$ in $\Md^1_{p,q}(\Omega)$ as $R\uparrow\infty$ by ergodicity, and recalling that $\expec{\nabla\psi}=0$, we deduce
\[\nabla u_X(x)=\expec{X}\E[(\nabla\psi)^\flat]=0.\]
This means $\expec{X\nabla\psi}=\nabla u_X=0$, and the arbitrariness of $X$ yields the claim $\nabla\psi=0$.

\medskip
\step2
Density result:
If $\psi$ belongs to $W_\loc^{1,\infty}(\R^d;(\Md^1_{p,q})'(\Omega))$ with $\nabla\psi$ stationary and $\expec{\nabla\psi}=0$, then there exists a sequence $(X_n)_n\subset\Sc(\Omega)$ such that $\nabla^\st X_n$ converges to $(\nabla\psi)^\flat$ weakly-* in $(\Md^1_{p,q})'(\Omega)$.

\medskip\noindent
Let $\psi\in W^{1,\infty}_\loc(\R^d;(\Md^1_{p,q})'(\Omega))$ with $\nabla\psi$ stationary and $\expec{\nabla\psi}=0$.
As $(\nabla\psi)^\flat\in(\Md^1_{p,q})'(\Omega)^d$, by density, there is a sequence $(Y_m)_m\subset\Sc(\Omega)^d$ that converges to $(\nabla\psi)^\flat$ weakly-* in $(\Md^1_{p,q})'(\Omega)$.
Next, for all $m,r\ge1$, we define $X_{m,r}\in H^1(\Omega)$ as the unique solution of
\begin{equation}\label{eq:approxY}
(\tfrac1r-\triangle^\st)X_{m,r}=-\nabla^\st\cdot Y_m.
\end{equation}
The energy estimate for this equation takes the form
\[r^{-\frac12}\|X_{m,r}\|_{\Ld^2(\Omega)}+\|\nabla^\st X_{m,r}\|_{\Ld^2(\Omega)}\lesssim\|Y_m\|_{\Ld^2(\Omega)},\]
and the proof of Corollary~\ref{cor:CZ-M} (in the simpler case $\Aa=\Id$) yields, up to a duality argument,
\[\|\nabla^\st X_{m,r}\|_{(\Md^1_{p,q})'(\Omega)}\lesssim_{p,q}\|Y_m\|_{(\Md^1_{p,q})'(\Omega)}.\]
A weak compactness argument then allows to find a diagonal subsequence $(\nabla^\st X_n)_n\subset(\nabla^\st X_{m,r})_{m,r}$ that converges weakly-* in $(\Md^1_{p,q})'(\Omega)$ to some limit $L$ with $\expec{L}=0$,
in such a way that we may pass to the limit in~\eqref{eq:approxY} and obtain
\begin{equation}\label{eq:relation-L-pre}
-\nabla^\st\cdot L=-\nabla^\st\cdot (\nabla\psi)^\flat.
\end{equation}
In addition, the gradient structure of $(\nabla^\st X_n)_n$ allows us to write the limit as $L=(\nabla\phi)^\flat$ for some $\phi\in W^{1,\infty}_\loc(\R^d;(\Md^1_{p,q})'(\Omega))$.
The relation~\eqref{eq:relation-L-pre} then becomes
\begin{equation*}
-\nabla^\st\cdot(\nabla\phi)^\flat=-\nabla^\st\cdot(\nabla\psi)^\flat.
\end{equation*}
In view of Step~1, we can conclude $L^\sharp=\nabla\phi=\nabla\psi$.
Further approximating the $X_{n}$'s by elements in $\Sc(\Omega)$, the claimed density result follows.

\medskip
\step3 Conclusion.\\
Let $\psi$ belong to $W^{1,\infty}_\loc(\R^d;(\Md^1_{p,q})'(\Omega))$ with $\nabla\psi$ stationary and $\expec{\nabla\psi}=0$, and assume that $-\nabla\cdot\Aa\nabla\psi=0$ is satisfied in $\R^d$ in the distributional sense.
In particular, $(\nabla\psi)^\flat$ belongs to $(\Md^1_{p,q})'(\Omega)$ and satisfies for all $X\in\Sc(\Omega)$,
\begin{equation}\label{eq:nabphi-test}
\expecm{\nabla^\st X\cdot\Aa^\flat(\nabla\psi)^\flat}\,=\,0.
\end{equation}
Given $Y\in\Sc(\Omega)^d$, consider the unique solution $w_Y\in W^{1,\infty}_\loc(\R^d;\Ld^2(\Omega))$ of
\[-\nabla\cdot\Aa^*{\nabla w_Y}=\nabla\cdot Y^\sharp,\]
such that $\nabla w_Y$ is stationary with $\expec{\nabla w_Y}=0$ and with anchoring $\int_Bw_Y=0$.
For all $X\in\Sc(\Omega)$, this equation yields
\begin{equation}\label{eq:nabpsi-test}
\expecm{(\nabla w_Y)^\flat\cdot\Aa^\flat\nabla^\st X}=-\expecm{Y\cdot\nabla^\st X},
\end{equation}
and we note that Corollary~\ref{cor:CZ-M} with $Y\in\Sc(\Omega)^d$ ensures that $w_Y\in W^{1,\infty}_\loc(\R^d;\Md^1_{p,q}(\Omega))$ for all $1<p,q<\infty$. Appealing to the density result of Step~2, we deduce from~\eqref{eq:nabphi-test}--\eqref{eq:nabpsi-test},
\[0\,=\,\expecm{(\nabla w_Y)^\flat\cdot\Aa^\flat(\nabla\psi)^\flat}\,=\,-\expecm{Y\cdot(\nabla\psi)^\flat},\]
and the arbitrariness of $Y$ yields the conclusion $\nabla\psi=0$.
\end{proof}

With this uniqueness result at hand, we may now pass to the infinite-period limit in the estimates of Proposition~\ref{prop:weak-cor} and conclude with the proof of Theorem~\ref{th:weak-cor}.

\begin{proof}[Proof of Theorem~\ref{th:weak-cor}]
For all $X\in\Sc(\Omega)$ and $g\in C^\infty_c(\R^d)$, there is some large enough $L_0>0$ such that $X\in\Sc(\Omega)$ and $g\in C^\infty_\per(Q_L)$ for all $L\ge L_0$. Therefore, the bounds of Proposition~\ref{prop:weak-cor} ensure that for all $n\le d$ the sequence $(\varphi_L^n,\sigma_L^n)_{L\ge1}$ is uniformly bounded in $C^\infty(\R^d;\Sc'(\Omega))$. By a weak-* compactness argument, we deduce that up to extraction of a subsequence $(\varphi_L^n,\sigma_L^n)$ converges weakly-* to some $(\varphi^n,\sigma^n)$ in $C^\infty(\R^d;\Sc'(\Omega))$, such that $(\varphi^n,\sigma^n)$ is stationary with $\expec{(\varphi^n,\sigma^n)}=0$ for all $n<d$, and such that $(\nabla\varphi^d,\nabla\sigma^d)$ is stationary with $\expecmm{(\nabla\varphi^d,\nabla\sigma^d)}=0$ and anchoring $\int_B(\varphi^d,\sigma^d)=0$.
Passing to the limit in the periodized corrector equations~\eqref{eq:phin-Lper}--\eqref{eq:bara-Lper} and in the estimates~\eqref{eq:bnd-phin-1}--\eqref{eq:average-phin}, we easily deduce that the limiting collection $\{\varphi^n,\sigma^n\}_{1\le n\le d}$ satisfies equations~\eqref{eq:phidef2}--\eqref{eq:baradef2} in the distributional sense as well as the corresponding limiting estimates.
Finally, Lemma~\ref{lem:unique} ensures that those limiting objects are uniquely defined.
\end{proof}

\section{Effective description of ensemble averages}\label{sec:BSconj}
This section is devoted to the proof of Theorem~\ref{th:main}.
More precisely, we establish the following error estimates. We do not know whether this is optimal in general.

\begin{theor}[Effective description of ensemble averages]\label{th:main-2}
Let $d>1$,
let the higher-order effective tensors $\{\bar\Aa^n\}_{1\le n\le d}$ be defined in Theorem~\ref{th:weak-cor},
and let the associated higher-order homogenized solution operators be given in Definition~\ref{def:high-eq}.
Then, for all $f\in C^\infty_c(\R^d)^d$,
we have for all $n<d$, $\frac{d}{d-n}\vee2<p<\infty$, and $\eta>0$,
\begin{align}
&\hspace{-0.4cm}\big\|\big[\,\expec{\nabla u_{\e,f}}-\nabla\bar\Oc_\e^n[f]\,\big]_{2;\e}\big\|_{\Ld^p(\R^d)}+\big\|\big[\,\E[\Aa(\tfrac\cdot\e)\nabla u_{\e,f}]-\bar\Ac_\e^n\nabla\bar\Oc_\e^n[f]\,\big]_{2;\e}\big\|_{\Ld^p(\R^d)}\label{eq:bnd-nabu-n-fin}\\
&\hspace{8.3cm}\,\lesssim_{p,\eta}\,\e^{n}\|\langle\cdot\rangle^\eta\langle\nabla\rangle^{2n}f\|_{\Ld^p(\R^d)},\nonumber
\end{align}
while at critical order $n=d$ we have for all $0<\eta\ll1$ and $\frac{d}{\eta}<p<\infty$,
\begin{align}
&\hspace{-0.3cm}\big\|\big[\,\expec{\nabla u_{\e,f}}-\nabla\bar\Oc_\e^d[f]\,\big]_{2;\e}\big\|_{\Ld^p(\R^d)}+\big\|\big[\,\E[\Aa(\tfrac\cdot\e)\nabla u_{\e,f}]-\bar\Ac_\e^d\nabla\bar\Oc_\e^d[f]\,\big]_{2;\e}\big\|_{\Ld^p(\R^d)}\label{eq:bnd-nabu-d-fin}\\
&\hspace{8cm}\,\lesssim_{p,\eta}\,\e^{d-\eta}\,\|\langle\cdot\rangle^\eta\langle\nabla\rangle^{2d}f\|_{\Ld^p(\R^d)}.\nonumber\qedhere
\end{align}
\end{theor}

\begin{proof}
We consider the ensemble-averaged field and the ensemble-averaged flux separately, and we split the proof into four steps.

\medskip
\step1 Preliminary: Given a test function $g\in C^\infty_c(\R^d)^d$, we consider the unique almost sure gradient solution $\nabla v_{\e,g}$ in $\Ld^2(\R^d)^d$ of the auxiliary equation
\begin{equation}\label{eq:aux-veps}
-\nabla\cdot\Aa^*(\tfrac\cdot\e)\nabla v_{\e,g}\,=\,\nabla\cdot g,\qquad\text{in $\R^d$},
\end{equation}
and we show that for all $h\in C^\infty_c(\R^d)$, $X\in\Sc(\Omega)$, $0<\eta,\delta\ll1$, and $1<r<\frac{d}{d-\eta}$,
\begin{multline}\label{eq:bnd-Aeps}
A_\e(X,h,g):=\expec{X\int_{\R^d}\e^d(\varphi^d,\sigma^d)(\tfrac\cdot\e)\,h\nabla v_{\e,g}}\\
\,\lesssim_{r,\eta,\delta}\,
\e^{d-\eta}\big(\|X\|_{\Ld^{\infty}(\Omega)}+\|[D_0X^\sharp]_\infty\|_{\Ld^r(\R^d;\Ld^{\infty}(\Omega))}\big)\|\langle \cdot\rangle^\eta h\|_{\Ld^{r'}(\R^d)}\|[g]_{2+\delta;\e}\|_{\Ld^r(\R^d)}.
\end{multline}
Starting point is the weak corrector estimate of Theorem~\ref{th:weak-cor}(i), which yields for all $\eta,\delta>0$ and $r<\frac{d}{d-\eta}$,
\begin{align*}
A_\e(X,h,g)&~\le~\e^d\int_{\R^d}|h(x)|\big|\expecm{X\nabla v_{\e,g}(x)\,(\varphi^d,\sigma^d)(\tfrac x\e)}\!\big|\,dx\\
&~\lesssim_{r,\eta,\delta}~\e^{d-\eta}\int_{\R^d}\langle x\rangle^\eta |h(x)|\\
&\qquad\qquad\times\bigg(\int_{\R^d}\Big\|\Big(\fint_{B(y)}\big|D_0\big(X^\sharp(y')\,T_{y'}\nabla v_{\e,g}(x)\big)\big|^2dy'\Big)^\frac12\Big\|_{\Ld^{2+\delta}(\Omega)}^rdy\bigg)^\frac1rdx.
\end{align*}
Using Leibniz' rule for $D_0$, we split the right-hand side into two terms: $A_\e^1(X,h,g)$ and $A_\e^2(X,h,g)$, where the first one corresponds to $(D_0X^\sharp(y'))T_{y'}\nabla v_{\e,g}(x)$, while the second one corresponds to $X^\sharp(y')(D_0T_{y'}\nabla v_{\e,g}(x))$.
Straightforward estimates lead to
\begin{eqnarray*}
A^1_\e(X,h,g)&\lesssim_{r,\eta,\delta}&\e^{d-\eta}\|[D_0X^\sharp]_{\infty}\|_{\Ld^r(\R^d;\Ld^{\infty}(\Omega))}\int_{\R^d}\langle x\rangle^\eta |h(x)|\|\nabla v_{\e,g}(x)\|_{\Ld^{2+\delta}(\Omega)}\,dx,\\
A^2_\e(X,h,g)&\lesssim_{r,\eta,\delta}&\e^{d-\eta}\|X\|_{\Ld^{\infty}(\Omega)}\\
&&\hspace{-0.3cm}\times\int_{\R^d}\langle x\rangle^\eta |h(x)|\bigg(\int_{\R^d}\Big\|\Big(\fint_{B(y)}|D_0T_{y'}\nabla v_{\e,g}(x)|^{2}dy'\Big)^\frac1{2}\Big\|_{\Ld^{2+\delta}(\Omega)}^rdy\bigg)^\frac1r\,dx,
\end{eqnarray*}
and thus, by Hölder's inequality,
\begin{eqnarray}
\hspace{-0.6cm}A^1_\e(X,h,g)&\lesssim_{r,\eta,\delta}&\e^{d-\eta}\|[D_0X^\sharp]_{\infty}\|_{\Ld^r(\R^d;\Ld^{\infty}(\Omega))}\|\langle\cdot\rangle^\eta h\|_{\Ld^{r'}(\R^d)}\|\nabla v_{\e,g}\|_{\Ld^r(\R^d;\Ld^{2+\delta}(\Omega))},\label{eq:bnd-pre-A1eps}\\
\hspace{-0.6cm}A^2_\e(X,h,g)&\lesssim_{r,\eta,\delta}&\e^{d-\eta}\|X\|_{\Ld^{\infty}(\Omega)}\|\langle\cdot\rangle^\eta h\|_{\Ld^{r'}(\R^d)}\nonumber\\
\hspace{-0.6cm}&&\quad\times\bigg(\int_{\R^d\times\R^d}\Big\|\Big(\fint_{B(y)}|D_0T_{y'}\nabla v_{\e,g}(x)|^{2}dy'\Big)^\frac1{2}\Big\|_{\Ld^{2+\delta}(\Omega)}^rdxdy\bigg)^\frac1r.\label{eq:bnd-pre-A2eps}
\end{eqnarray}
We now estimate the two terms separately, and we start with $A_\e^1$.
In order to apply annealed maximal regularity theory, we smuggle in local averages at the scale $\e$, for $r\le2$,
\[\|\nabla v_{\e,g}\|_{\Ld^r(\R^d;\Ld^{2+\delta}(\Omega))}\,\le\,\|[\nabla v_{\e,g}]_{2+\delta;\e}\|_{\Ld^r(\R^d;\Ld^{2+\delta}(\Omega))},\]
and we need to replace $[\cdot]_{2+\delta;\e}$ by local quadratic averages $[\cdot]_{2;\e}$.
For that purpose, since $\nabla v_{\e,g}$ satisfies the elliptic equation~\eqref{eq:aux-veps},
we appeal to Meyers' perturbative argument in the following form, for $0<\delta\ll1$,
\[\Big(\fint_{B_\e(x)}|\nabla v_{\e,g}|^{2+\delta}\Big)^\frac1{2+\delta}\,\lesssim\,\Big(\fint_{B_{2\e}(x)}|\nabla v_{\e,g}|^{2}\Big)^\frac1{2}+\Big(\fint_{B_{2\e}(x)}|g|^{2+\delta}\Big)^\frac1{2+\delta},\]
so that the above becomes
\[\|\nabla v_{\e,g}\|_{\Ld^r(\R^d;\Ld^{2+\delta}(\Omega))}\,\lesssim\,\|[\nabla v_{\e,g}]_{2;\e}\|_{\Ld^r(\R^d;\Ld^{2+\delta}(\Omega))}+\|[g]_{2+\delta;\e}\|_{\Ld^r(\R^d)}.\]
We are now in position to apply the ($\e$-rescaled) annealed maximal regularity estimate of Theorem~\ref{th:CZ-ann}, which yields for all $1<r\le2$,
\begin{equation}\label{eq:apriori-ann-Meyers}
\|\nabla v_{\e,g}\|_{\Ld^r(\R^d;\Ld^{2+\delta}(\Omega))}\,\lesssim_{r}\,\|[g]_{2+\delta;\e}\|_{\Ld^r(\R^d)}.
\end{equation}
Inserting this into~\eqref{eq:bnd-pre-A1eps}, we deduce for all $1<r<\frac{d}{d-\eta}$ and $0<\eta,\delta\ll1$,
\begin{equation}\label{eq:bnd-Aeps1}
A_\e^1(X,h,g)\,\lesssim_{r,\eta,\delta}\,\e^{d-\eta}\|[D_0X^\sharp]_\infty\|_{\Ld^r(\R^d;\Ld^{\infty}(\Omega))}\|\langle \cdot\rangle^\eta h\|_{\Ld^{r'}(\R^d)}\|[g]_{2+\delta;\e}\|_{\Ld^r(\R^d)}.
\end{equation}
We turn to the estimate on $A_\e^2$.
Acting with $T$ and taking the Malliavin derivative in the equation~\eqref{eq:aux-veps} for~$\nabla v_{\e,g}$, we find
\begin{equation}\label{eq:DTnabv}
-\nabla\cdot\big(\Aa^*(\tfrac\cdot\e+y)(D_0T_y\nabla v_{\e,g})\big)=\nabla\cdot \big((D_0\Aa^*(\tfrac\cdot\e+y))T_y\nabla v_{\e,g}\big).
\end{equation}
Smuggling in local averages at the scale $\e$, using Meyers' argument and the annealed maximal regularity as above for equation~\eqref{eq:DTnabv}, we find for all $1<r\le2$,
\begin{eqnarray*}
\lefteqn{\bigg(\int_{\R^d\times\R^d}\Big\|\Big(\fint_{B(y)}|D_0T_{y'}\nabla v_{\e,g}(x)|^{2}dy'\Big)^\frac1{2}\Big\|_{\Ld^{2+\delta}(\Omega)}^rdxdy\bigg)^\frac1r}\\
&\le&\bigg(\int_{\R^d\times\R^d}\Big\|\Big(\fint_{B_\e(x)\times B(y)}|D_0T_{y'}\nabla v_{\e,g}(x')|^{2+\delta}dx'dy'\Big)^\frac1{2+\delta}\Big\|_{\Ld^{2+\delta}(\Omega)}^rdxdy\bigg)^\frac1r\\
&\lesssim_{r,\delta}&\bigg(\int_{\R^d\times\R^d}\Big\|\Big(\fint_{B_\e(x)\times B(y)}|(D_0\Aa^*(\tfrac{x'}\e+y))T_y\nabla v_{\e,g}(x')|^{2+\delta}dx'dy'\Big)^\frac1{2+\delta}\Big\|_{\Ld^{2+2\delta}(\Omega)}^rdxdy\bigg)^\frac1r.
\end{eqnarray*}
Recalling that $|D_0\Aa^*(x)|\lesssim|\calC_0(x)|$ and using the integrability condition~\eqref{eq:cov-L1}, straightforward computations lead to
\begin{multline*}
\lefteqn{\bigg(\int_{\R^d\times\R^d}\Big\|\Big(\fint_{B(y)}|D_0T_{y'}\nabla v_{\e,g}(x)|^{2}dy'\Big)^\frac1{2}\Big\|_{\Ld^{2+\delta}(\Omega)}^rdxdy\bigg)^\frac1r}\\
\,\lesssim_{r,\delta}\,\|[\nabla v_{\e,g}]_{2+2\delta;\e}\|_{\Ld^r(\R^d;\Ld^{2+2\delta}(\Omega))}.
\end{multline*}
Using again Meyers' argument and the annealed maximal regularity similarly as above, we conclude for all $1<r\le2$,
\begin{equation*}
\bigg(\int_{\R^d\times\R^d}\Big\|\Big(\fint_{B(y)}|D_0T_{y'}\nabla v_{\e,g}(x)|^{2}dy'\Big)^\frac1{2}\Big\|_{\Ld^{2+\delta}(\Omega)}^rdxdy\bigg)^\frac1r
\,\lesssim_{r,\delta}\,\|[g]_{2+2\delta;\e}\|_{\Ld^r(\R^d)}.
\end{equation*}
Inserting this into~\eqref{eq:bnd-pre-A2eps}, we deduce for all $1<r<\frac{d}{d-\eta}$ and $0<\eta,\delta\ll1$,
\begin{equation*}
A_\e^2(X,h,g)\,\lesssim_{r,\eta,\delta}\,\e^{d-\eta}\|X\|_{\Ld^{\infty}(\Omega)}\|\langle\cdot\rangle^\eta h\|_{\Ld^{r'}(\R^d)}\|[g]_{2+2\delta;\e}\|_{\Ld^r(\R^d)}.
\end{equation*}
Combined with~\eqref{eq:bnd-Aeps1}, this yields the claim~\eqref{eq:bnd-Aeps}.

\medskip
\step2 Approximation of ensemble-averaged field:
for all $n<d$, $\frac{d}{d-n}\vee2<p<\infty$, and~$\delta>0$,
\begin{equation}\label{eq:bnd-Enabu-n}
\big\|\big[\,\expec{\nabla u_{\e,f}}-\nabla\bar\Oc_\e^n[f]\,\big]_{2-\delta;\e}\big\|_{\Ld^p(\R^d)}
\,\lesssim_{p,\delta}\,\e^{n}\|\langle\nabla\rangle^{2n-1}f\|_{\Ld^p(\R^d)},
\end{equation}
while at critical order $n=d$ we have for all $0<\eta,\delta\ll1$ and $\frac{d}{\eta}<p<\infty$,
\begin{equation}\label{eq:bnd-Enabu-d}
\big\|\big[\,\expec{\nabla u_{\e,f}}-\nabla\bar\Oc_\e^d[f]\,\big]_{2-\delta;\e}\big\|_{\Ld^p(\R^d)}\,\lesssim_{p,\eta,\delta}\,\e^{d-\eta}\,\|\langle\cdot\rangle^\eta\langle\nabla\rangle^{2d-1}f\|_{\Ld^p(\R^d)}.
\end{equation}
Starting point is Proposition~\ref{cor:2scale}, where identity~\eqref{eq:2sc-identity-err} can be continued to higher orders in terms of the weak correctors defined in Theorem~\ref{th:weak-cor}: the following equation holds in the distributional sense on $\R^d\times\Omega$ for all $1\le n\le d$, 
\begin{multline}\label{eq:2sc-identity-err-2}
-\nabla\cdot\Aa(\tfrac\cdot\e)\nabla(u_{\e,f}-\Fc^n_\e[\bar\Oc^n_\e [f]])=\nabla\cdot\Big(\sum_{k=2}^n\sum_{l=n+2-k}^n\e^{k+l-2}\,\bar\Aa^k_{i_1\ldots i_{k-1}}\nabla\nabla^{k-1}_{i_1\ldots i_{k-1}}\tilde u^l_f\Big)\\
+\nabla\cdot\big(\e^n(\Aa\varphi^n_{i_1\ldots i_n}-\sigma^n_{i_1\ldots i_n})(\tfrac\cdot\e)\nabla\nabla^n_{i_1\ldots i_n}\bar \Oc^n_\e [f]\big).
\end{multline}
Given a test function $g\in C^\infty_c(\R^d)^d$, consider the solution $\nabla v_{\e,g}$ of the auxiliary problem~\eqref{eq:aux-veps}.
Testing the latter with $u_{\e,f}-\Fc^n_\e[\bar\Oc^n_\e [f]]$, we find in $\Sc'(\Omega)$,
\[\int_{\R^d} g\cdot\nabla(u_{\e,f}-\Fc^n_\e[\bar \Oc^n_\e [f]])\,=\,-\int_{\R^d} \nabla v_{\e,g}\cdot\Aa(\tfrac\cdot\e)\nabla(u_{\e,f}-\Fc^n_\e[\bar \Oc^n_\e [f]]),\]
where both sides are indeed well-defined in $\Sc'(\Omega)$ in view of the result~\eqref{eq:bnd-Aeps} of Step~1 (together with corresponding estimates for correctors $\varphi^n$ with $n\le d$).
Next, testing equation~\eqref{eq:2sc-identity-err-2} with $v_{\e,g}$, we deduce in $\Sc'(\Omega)$,
\begin{multline*}
\int_{\R^d} g\cdot\nabla(u_{\e,f}-\Fc^n_\e[\bar \Oc^n_\e [f]])
\,=\,\int_{\R^d}\nabla v_{\e,g}\cdot\sum_{k=2}^n\sum_{l=n+2-k}^n\e^{k+l-2}\,\bar\Aa^k_{i_1\ldots i_{k-1}}\nabla\nabla^{k-1}_{i_1\ldots i_{k-1}}\tilde u^l_f\\
+\e^n\int_{\R^d}\nabla v_{\e,g}\cdot(\Aa\varphi^n_{i_1\ldots i_n}-\sigma^n_{i_1\ldots i_n})(\tfrac\cdot\e)\nabla\nabla^n_{i_1\ldots i_n}\bar \Oc^n_\e [f].
\end{multline*}
Taking the expectation of both sides of this identity (or, more precisely, testing this identity with $1\in\Sc(\Omega)$), and noting that the definition of the higher-order two-scale expansion~\eqref{eq:def-2sc-exp} and the centering of weak correctors yield
\[\expecm{\nabla \Fc_\e^n[\bar \Oc^n_\e [f]]}=\nabla\bar \Oc^n_\e[f]+\mathds1_{n=d}\,\e^d\expecm{\varphi^d_{i_1\ldots i_d}(\tfrac\cdot\e)\nabla\nabla^d_{i_1\ldots i_d}\bar \Oc^d_\e [f]},\]
we find
\begin{multline}\label{eq:decomp-Enabu-hom}
\int_{\R^d} g\cdot\big(\expec{\nabla u_{\e,f}}-\nabla\bar \Oc^n_\e[f]\big)
\,=\,\expecM{\int_{\R^d}\nabla v_{\e,g}\cdot\sum_{k=2}^n\sum_{l=n+2-k}^n\e^{k+l-2}\,\bar\Aa^k_{i_1\ldots i_{k-1}}\nabla\nabla^{k-1}_{i_1\ldots i_{k-1}}\tilde u^l_f}\\
+\expecM{\e^n\int_{\R^d}\nabla v_{\e,g}\cdot (\Aa\varphi^n_{i_1\ldots i_n}-\sigma^n_{i_1\ldots i_n})(\tfrac\cdot\e)\nabla\nabla^n_{i_1\ldots i_n}\bar \Oc^d_\e [f]}\\
+\mathds1_{n=d}\,\expecM{\e^d\int_{\R^d}g\cdot\varphi^d_{i_1\ldots i_d}(\tfrac\cdot\e)\nabla\nabla^d_{i_1\ldots i_d}\bar \Oc^d_\e [f]}.
\end{multline}
We focus on the proof of~\eqref{eq:bnd-Enabu-d} for $n=d$, while the proof of~\eqref{eq:bnd-Enabu-n} is similar.
We analyze the three right-hand side terms in the above identity~\eqref{eq:decomp-Enabu-hom} separately.
For the first term, we use Hölder's inequality and we appeal to the annealed maximal regularity theory of Theorem~\ref{th:CZ-ann} in form of $\|[\nabla v_{\e,g}]_{2;\e}\|_{\Ld^r(\R^d;\Ld^2(\Omega))}\lesssim\|[g]_{2;\e}\|_{\Ld^r(\R^d)}$.
For the second term, we apply the result~\eqref{eq:bnd-Aeps} of Step~1, while for the third term the weak corrector estimate of Theorem~\ref{th:weak-cor}(i) suffices.
For all $0<\eta,\delta\ll1$ and $1<r<\frac{d}{d-\eta}$, we deduce
\begin{multline}\label{eq:pre-estim-EDu-bar}
\bigg|\int_{\R^d} g\cdot\big(\expec{\nabla u_{\e,f}}-\nabla\bar \Oc^d_\e[f]\big)\bigg|
\,\lesssim_{r,\eta,\delta}\,\sum_{k=2}^d\sum_{l=d+2-k}^d\e^{k+l-2}\,\|[g]_{2;\e}\|_{\Ld^r(\R^d)}\|\nabla^k\tilde u_f^l\|_{\Ld^{r'}(\R^d)}\\
+\e^{d-\eta}\|[g]_{2+\delta;\e}\|_{\Ld^r(\R^d)}\|\langle\cdot\rangle^\eta\nabla^{d+1}\bar\Oc_\e^d[f]\|_{\Ld^{r'}(\R^d)}.
\end{multline}
Recalling the hierarchy of higher-order homogenized equations in Definition~\ref{def:high-eq}, the standard weighted $\Ld^p$ regularity for the constant-coefficient Poisson equation allows to estimate by iteration, for all $1<p<\infty$, $0\le\eta<d\frac{p-1}p$, $m\ge0$, and $1\le n\le d$,
\[\|\langle\cdot\rangle^\eta\nabla^{m+1}\tilde u_f^n\|_{\Ld^p(\R^d)}\,\lesssim_{n,p,\eta}\,\|\langle\cdot\rangle^\eta\nabla^{m+n-1}f\|_{\Ld^p(\R^d)},\]
and therefore, as $\nabla\bar \Oc_\e^n[f]\,:=\,\sum_{k=1}^n\e^{k-1}\nabla\tilde u^k_f$,
\begin{equation}\label{eq:reg-high-hom}
\|\langle\cdot\rangle^\eta\nabla^{m+1}\bar\Oc_\e^n[f]\|_{\Ld^p(\R^d)}\,\lesssim_{n,p,\eta}\,\|\langle\cdot\rangle^\eta\nabla^{m}\langle\nabla\rangle^{n-1}f\|_{\Ld^p(\R^d)}.
\end{equation}
Using this, the bound~\eqref{eq:pre-estim-EDu-bar} becomes for all $0<\eta,\delta\ll1$ and $1<r<\frac{d}{d-\eta}$,
\begin{equation*}
\bigg|\int_{\R^d} g\cdot\big(\expec{\nabla u_{\e,f}}-\nabla\bar \Oc^d_\e[f]\big)\bigg|
\,\lesssim_{r,\eta,\delta}\,\e^{d-\eta}\|[g]_{2+\delta;\e}\|_{\Ld^r(\R^d)}\|\langle\cdot\rangle^\eta\langle\nabla\rangle^{2d-1}f\|_{\Ld^{r'}(\R^d)}.
\end{equation*}
Taking the supremum over~$g$, this precisely yields the claim~\eqref{eq:bnd-Enabu-d} for the ensemble-averaged field.

\medskip
\step3 Approximation of ensemble-averaged flux:
for all $n<d$, $\frac{d}{d-n}\vee2<p<\infty$, and~$\delta>0$,
\begin{equation}\label{eq:bnd-Eanabu-n}
\big\|\big[\,\E[\Aa(\tfrac\cdot\e)\nabla u_{\e,f}]-\bar\Ac_\e^n\nabla\bar\Oc_\e^n[f]\,\big]_{2-\delta;\e}\big\|_{\Ld^p(\R^d)}
\,\lesssim_{p,\delta}\,\e^{n}\|\langle\nabla\rangle^{2n-1}f\|_{\Ld^p(\R^d)},
\end{equation}
while at critical order $n=d$ we have for all $0<\eta,\delta\ll1$ and $\frac{d}{\eta}<p<\infty$,
\begin{equation}\label{eq:bnd-Eanabu-d}
\big\|\big[\,\E[\Aa(\tfrac\cdot\e)\nabla u_{\e,f}]-\bar\Ac_\e^d\nabla\bar\Oc_\e^d[f]\,\big]_{2-\delta;\e}\big\|_{\Ld^p(\R^d)}
\,\lesssim_{p,\eta,\delta}\,\e^{d-\eta}\,\|\langle\cdot\rangle^\eta\langle\nabla\rangle^{2d-1}f\|_{\Ld^p(\R^d)}.
\end{equation}
Starting point is the same identity~\eqref{eq:2sc-identity-err-2} as in Step~1, which we now test with the unique almost sure gradient solution $\nabla w_{\e,g}$ in $\Ld^2(\R^d)^d$ of the auxiliary problem
\[-\nabla\cdot\Aa^*(\tfrac\cdot\e)\nabla w_{\e,g}=\nabla\cdot \Aa^*(\tfrac\cdot\e)g,\qquad\text{in $\R^d$}.\]
This yields the following identity in $\Sc'(\Omega)$,
\begin{multline*}
\int_{\R^d}g\cdot\Aa(\tfrac\cdot\e)(\nabla u_{\e,f}-\nabla \Fc_\e^n[\bar\Oc_\e^n[f]])\,=\,\int_{\R^d}\nabla w_{\e,g}\cdot\sum_{k=2}^n\sum_{l=n+2-k}^n\e^{k+l-2}\bar\Aa^k_{i_1\ldots i_{k-1}}\nabla\nabla^{k-1}_{i_1\ldots i_{k-1}}\tilde u^l_f\\
+\e^n\int_{\R^d}\nabla w_{\e,g}\cdot(\Aa\varphi^n_{i_1\ldots i_n}-\sigma^n_{i_1\ldots i_n})(\tfrac\cdot\e)\nabla\nabla^n_{i_1\ldots i_n}\bar\Oc_\e^n[f].
\end{multline*}
Taking the expectation of both sides of this identity (or, more precisely, testing this identity with $1\in\Sc(\Omega)$), and noting that the definition of the higher-order two-scale expansion~\eqref{eq:def-2sc-exp} and of higher-order effective tensors~\eqref{eq:baradef2} yield
\[\expec{\Aa(\tfrac\cdot\e)\nabla \Fc_\e^n[\bar\Oc_\e^n[f]]}\,=\,\bar\Ac_\e^n\nabla\bar\Oc_\e^n[f]+\expec{\e^n(\Aa\varphi^n_{i_1\ldots i_n})(\tfrac\cdot\e)\nabla\nabla^n_{i_1\ldots i_n}\bar\Oc_\e^n[f]},\]
we find
\begin{multline*}
\int_{\R^d}g\cdot\big(\E[\Aa(\tfrac\cdot\e)\nabla u_{\e,f}]-\bar\Ac_\e^n\nabla\bar\Oc_\e^n[f]\big)\\
\,=\,\expecM{\int_{\R^d}\nabla w_{\e,g}\cdot\sum_{k=2}^n\sum_{l=n+2-k}^n\e^{k+l-2}\bar\Aa^k_{i_1\ldots i_{k-1}}\nabla\nabla^{k-1}_{i_1\ldots i_{k-1}}\tilde u^l_f}\\
+\expecM{\e^n\int_{\R^d}\nabla w_{\e,g}\cdot(\Aa\varphi^n_{i_1\ldots i_n}-\sigma^n_{i_1\ldots i_n})(\tfrac\cdot\e)\nabla\nabla^n_{i_1\ldots i_n}\bar \Oc^n_\e[f]}\\
+\expecM{\e^n\int_{\R^d}g\cdot(\Aa\varphi^n_{i_1\ldots i_n})(\tfrac\cdot\e)\nabla\nabla^n_{i_1\ldots i_n}\bar \Oc^n_\e[f]}.
\end{multline*}
Noting that the result~\eqref{eq:bnd-Aeps} of Step~1 also holds for $\nabla v_{\e,g}$ replaced by $\nabla w_{\e,g}$,
and arguing as in Step~2, the claim~\eqref{eq:bnd-Eanabu-n}--\eqref{eq:bnd-Eanabu-d} easily follows.

\medskip
\step4 Conclusion.\\
It remains to replace sub-quadratic averages $[\cdot]_{2-\delta}$ in~\eqref{eq:bnd-Enabu-n}--\eqref{eq:bnd-Enabu-d} and in~\eqref{eq:bnd-Eanabu-n}--\eqref{eq:bnd-Eanabu-d} by quadratic averages.
For that purpose, we may for instance appeal to the Sobolev embedding, which yields for all $0<\delta<1$ and $\alpha>\frac{d}{2}\delta$,
\[\|[h]_{2;\e}\|_{\Ld^p(\R^d)}\,\lesssim_{\alpha,\delta}\,\|[\langle\nabla\rangle^{\alpha}h]_{2-\delta;\e}\|_{\Ld^p(\R^d)}.\]
In addition, as by stationarity the operator $f\mapsto\expec{\nabla u_{\e,f}}$ commutes with translations (see also~\cite[Lemma~1.1]{DGL}), and as the same obviously holds for the higher-order homogenized solution operator \mbox{$f\mapsto\nabla\bar\Oc_\e^n[f]$}, we have
\[\langle\nabla\rangle^\alpha\big(\expec{\nabla u_{\e,f}}-\nabla\bar\Oc_\e^n[f]\big)\,=\,\expec{\nabla u_{\e,\langle\nabla\rangle^\alpha f}}-\nabla\bar\Oc_\e^n[\langle\nabla\rangle^\alpha f].\]
Therefore, the result~\eqref{eq:bnd-Enabu-n} of Step~2 yields for all $n<d$, $\frac{d}{d-n}\vee2<p<\infty$, and $\alpha>0$,
\begin{equation*}
\big\|\big[\,\expec{\nabla u_{\e,f}}-\nabla\bar\Oc_\e^n[f]\,\big]_{2;\e}\big\|_{\Ld^p(\R^d)}
\,\lesssim_{p,\delta}\,\e^{n}\|\langle\nabla\rangle^{2n+\alpha-1}f\|_{\Ld^p(\R^d)}.
\end{equation*}
Arguing similarly to upgrade~\eqref{eq:bnd-Enabu-d}, \eqref{eq:bnd-Eanabu-n}, and~\eqref{eq:bnd-Eanabu-d}, the conclusion follows.
\end{proof}

\appendix
\section{Intrinsic description of weak fluctuations}\label{sec:fluct}
Aside from the Bourgain--Spencer conjecture on ensemble averages, another natural question concerns the intrinsic description of fluctuations \mbox{$\nabla u_{\e,f}-\expec{\nabla u_{\e,f}}$} in the weak sense of $\Sc'(\Omega)$.
The proof of Theorem~\ref{th:main} ensures that such ``weak'' fluctuations are of order~$O(\e^{d-})$: for all $f\in C^\infty_c(\R^d)^d$, $X\in\Rc(\Omega)$, and~$\eta>0$,
\begin{equation*}
\big\|\big[\,\E[{X(\nabla u_{\e,f}-\expec{\nabla u_{\e,f}})}]\,\big]_{2;\e}\big\|_{\Ld^\infty(\R^d)}\,\lesssim_{X,f,\eta}\,\e^{d-\eta}.
\end{equation*}
This contrasts with the usual CLT scaling result, e.g.~\cite{DO1}, which states that fluctuations are of order $O(\e^{d/2})$ in the usual strong sense: for all $f,g\in C^\infty_c(\R^d)^d$ and $q<\infty$,
\[\expec{\Big|\int_{\R^d}g\cdot(\nabla u_{\e,f}-\expec{\nabla u_{\e,f}})\Big|^q}^\frac1q\,\lesssim_{f,g,q}\,\e^{\frac d2}.\]
Taking inspiration from our recent work with Gloria and Otto~\cite{DGO1,DO1} on ``strong'' fluctuations, we show that ``weak'' fluctuations can similarly be described intrinsically to relative order~$O(\e^{d/2-})$. This result only involves the standard strong correctors $\{\varphi^n\}_{1\le n\le\ell}$, and we do not know whether this could be improved.

\begin{theor}[Two-scale expansion for ``weak'' fluctuations]\label{th:fluct0}
For all $f\in C^\infty_c(\R^d)^d$,
there exists an $\ell$th-order differential operator $\Xi_\e^{\circ,\ell}[\nabla\cdot]$ with $\e$-rescaled stationary random coefficients expressed in terms of correctors $\{\varphi^n\}_{1\le n\le\ell}$, see~Definition~\ref{def:Xi0n} below, such that there holds
for all $X\in\Rc(\Omega)$, $g\in C^\infty_c(\R^d)^d$,
and $\eta>0$,
\begin{align*}\label{eq:decomp-nabu-Xin00}
&\bigg|\int_{\R^d}g\cdot\e^{-d}\,\expecm{X\big(\nabla u_{\e,f}-\expec{\nabla u_{\e,f}}\!\big)}-\int_{\R^d}(\nabla\bar\Oc_\e^{\ell})^*[g]\cdot\e^{-d}\,\expecm{X\Xi_\e^{\circ,\ell}[\nabla\bar\Oc_\e^\ell f]}\bigg|\\
+~&\bigg|\int_{\R^d}g\cdot\e^{-d}\,\expecm{X\big(\Aa(\tfrac\cdot\e)\nabla u_{\e,f}-\expec{\Aa(\tfrac\cdot\e)\nabla u_{\e,f}}\!\big)}\\
&\hspace{5.5cm}-\int_{\R^d}(\Id+\bar\Ac_\e^\ell\nabla\bar\Oc_\e^{\ell})^*[g]\cdot\e^{-d}\,\expecm{X\Xi_\e^{\circ,\ell}[\nabla\bar\Oc_\e^\ell f]}\bigg|\\
&\hspace{-0.7cm}\,\lesssim_{\eta}~\e^{\frac d2-\eta}\,\|[DX]_2\|_{\Ld^{1}(\R^d;\Ld^2(\Omega))}\|\mu_d\langle\nabla\rangle^{3\ell-2}g\|_{(\Ld^4\cap\Ld^{\infty})(\R^d)}\|\mu_d\langle\nabla\rangle^{3\ell-2}f\|_{(\Ld^4\cap\Ld^{\infty})(\R^d)},
\end{align*}
where the weight $\mu_d$ is given by
\[\mu_d(x):=\left\{\begin{array}{lll}
\langle x\rangle^\frac12&:&\text{$d$ odd},\\
\log(2+|x|)^\frac12&:&\text{$d$ even}.\end{array}\right.\qedhere\]
\end{theor}

\subsection{Intrinsic description of strong fluctuations}\label{sec:commut}
As first noticed by Gu and Mourrat~\cite{GuM}, random fluctuations of $\nabla u_{\e,f}$ are not described by the usual two-scale expansion:
the Gaussian limit of the centered rescaled observable $\e^{-d/2}\int_{\R^d}g\cdot(\nabla u_{\e,f}-\expec{\nabla u_{\e,f}})$ differs from the limit of $\e^{-d/2}\int_{\R^d}g\cdot\nabla\varphi_i^1(\tfrac\cdot\e)\nabla_i\bar u_f^1$.
In~\cite{DGO1}, with Gloria and Otto,
we establish the first \emph{pathwise} theory of fluctuations, providing an intrinsic description of fluctuations based on a suitably modified notion of two-scale expansion (see also the related heuristics in~\cite{GuM}). More precisely, fluctuations of~$\nabla u_{\e,f}$ coincide to first order with fluctuations of some deterministic Helmholtz projection of the corresponding {\it homogenization commutator}
\[\Xi_\e^1[\nabla u_{\e,f}]\,:=\,(\Aa(\tfrac\cdot\e)-\bar\Aa^1)\nabla u_{\e,f},\]
for which
the usual two-scale expansion is shown to be accurate: leading-order fluctuations are then governed by the so-called {\it standard} homogenization commutator,
\begin{equation*}
\Xi^{\circ,1}_\e[\nabla\bar u^1_f]\,:=\,(\Aa(\tfrac\cdot\e)-\bar\Aa^1)(\nabla\varphi_i^1(\tfrac\cdot\e)+\ee_i)\nabla_i\bar u_f^1.
\end{equation*}
We emphasize that the two-scale expansion of $\nabla u_{\e,f}$ via its commutator differs from the naive two-scale expansion~\eqref{eq:2sc}.

\medskip
As we showed in~\cite{DO1}, this theory is naturally extended to higher orders. For $0\le n\le\ell$ we define the following $n$th-order homogenization commutator, as inspired by the higher-order constitutive law~\eqref{eq:const-law-Aepsn},
\begin{equation}\label{eq:Xihomog}
\Xi^n_\e[\nabla u_{\e,f}]\,:=\,(\Aa(\tfrac\cdot\e)-\bar\Ac_\e^n)\nabla u_{\e,f}.
\end{equation}
We first recall that higher-order fluctuations of $\nabla u_{\e,f}$ coincide with fluctuations of some deterministic Helmholtz projection of this commutator; see~\cite[proof of Proposition~3.2]{DO1}.

\begin{lem}[Reduction to commutators; \cite{DO1}]\label{lem:homog-redE}
For all $1\le n\le\ell$ and $f,g\in C^\infty_c(\R^d)^d$, we have
\begin{multline}\label{eq:ident-nabu-XiE}
\int_{\R^d}g\cdot\big(\nabla u_{\e,f}-\expec{\nabla u_{\e,f}}\big)-\int_{\R^d}(\nabla\bar\Oc_\e^{n})^*[g]\cdot\big(\Xi_\e^n[\nabla u_{\e,f}]-\expec{\Xi_\e^n[\nabla u_{\e,f}]}\!\big)\\
=\,\int_{\R^d}\bigg(\sum_{k=2}^n\sum_{l=n+2-k}^n\e^{k+l-2}\bar\Aa_{i_1\ldots i_{k-1}}^{*,k}\nabla\nabla^{k-1}_{i_1\ldots i_{k-1}}\tilde v_g^l\bigg)\cdot\big(\nabla u_{\e,f}-\expec{\nabla u_{\e,f}}\big),
\end{multline}
which entails for all $q<\infty$,
\begin{align*}
&\E\bigg[\Big|\e^{-\frac d2}\int_{\R^d} g\cdot(\nabla u_{\e}-\expec{\nabla u_{\e}})\\
&\hspace{3cm}-\e^{-\frac d2}\int_{\R^d}(\nabla\bar\Oc_\e^{n})^*[g]\cdot\big(\Xi_\e^n[\nabla u_{\e,f}]-\expec{\Xi_\e^n[\nabla u_{\e,f}]}\!\big)\Big|^q\bigg]^\frac1q
\,\lesssim_{f,g,p}\,\e^n.
\qedhere
\end{align*}
\end{lem}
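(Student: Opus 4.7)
The plan is to prove the algebraic identity~\eqref{eq:ident-nabu-XiE} by duality against an adjoint higher-order homogenized problem, and then read off the stochastic bound from the pathwise fluctuation theory of~\cite{DGO1,DO1}. Let $\bar\Oc_\e^{*,n}[g]$ be the adjoint higher-order homogenized solution operator, defined exactly as in Definition~\ref{def:high-eq} but with every effective tensor $\bar\Aa^k$ replaced by its adjoint $\bar\Aa^{*,k}$, so that $(\nabla\bar\Oc_\e^n)^*[g]$ agrees with $\nabla\bar\Oc_\e^{*,n}[g]$ up to the sign conventions inherited from the formal adjoint of the differential operator $\bar\Ac_\e^n$. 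Applying the identity behind Definition~\ref{def:high-eq} to this adjoint hierarchy yields
\[
-\nabla\cdot(\bar\Ac_\e^n)^*\nabla\bar\Oc_\e^{*,n}[g]=\nabla\cdot g-\nabla\cdot E_\e,\qquad E_\e:=\sum_{k=2}^n\sum_{l=n+2-k}^n\e^{k+l-2}\bar\Aa^{*,k}_{i_1\ldots i_{k-1}}\nabla\nabla^{k-1}_{i_1\ldots i_{k-1}}\tilde v_g^l,
\]
so that the vector field $g+(\bar\Ac_\e^n)^*\nabla\bar\Oc_\e^{*,n}[g]-E_\e$ is divergence-free on $\R^d$.

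The core step is to test this divergence-free vector field against the curl-free field $\nabla u_{\e,f}\in\Ld^2(\R^d)^d$: by $\Ld^2$-orthogonality of gradients and divergence-free fields, the pairing vanishes. Transferring $(\bar\Ac_\e^n)^*$ back onto $\nabla u_{\e,f}$ by integration by parts, and substituting the decomposition $\bar\Ac_\e^n\nabla u_{\e,f}=\Aa(\tfrac\cdot\e)\nabla u_{\e,f}-\Xi^n_\e[\nabla u_{\e,f}]$ from~\eqref{eq:Xihomog}, produces an identity expressing $\int g\cdot\nabla u_{\e,f}$ as a combination of $\int(\nabla\bar\Oc_\e^n)^*[g]\cdot\Xi^n_\e[\nabla u_{\e,f}]$, $\int(\nabla\bar\Oc_\e^n)^*[g]\cdot\Aa(\tfrac\cdot\e)\nabla u_{\e,f}$, and $\int E_\e\cdot\nabla u_{\e,f}$. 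The original PDE~\eqref{eq:first-def-ups} for $u_{\e,f}$ makes $\Aa(\tfrac\cdot\e)\nabla u_{\e,f}+f$ divergence-free, so the middle term collapses to the deterministic pairing $-\int f\cdot(\nabla\bar\Oc_\e^n)^*[g]$; subtracting the same identity applied to $\E[\nabla u_{\e,f}]$ cancels this deterministic contribution and leaves exactly~\eqref{eq:ident-nabu-XiE}.

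The quantitative part then reduces to controlling the remainder $\int E_\e\cdot(\nabla u_{\e,f}-\E[\nabla u_{\e,f}])$ in $\Ld^q(\Omega)$. Each monomial in $E_\e$ carries an explicit factor $\e^{k+l-2}\ge\e^n$, and $\tilde v_g^l$ is bounded by a Sobolev norm of~$g$ through the standard $\Ld^p$ regularity of the higher-order homogenized equations in Definition~\ref{def:high-eq}, so $E_\e$ is a deterministic smooth vector field of size $O(\e^n)$ with constants depending only on $g$. On the other hand, random fluctuations of $\nabla u_{\e,f}$ tested against a fixed deterministic smooth vector field obey the CLT scaling $O(\e^{d/2})$ in $\Ld^q(\Omega)$ for every $q<\infty$: this is precisely the strong pathwise fluctuation result of~\cite{DGO1,DO1} that is recalled just below this lemma. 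Combining both estimates gives $\|\int E_\e\cdot(\nabla u_{\e,f}-\E[\nabla u_{\e,f}])\|_{\Ld^q(\Omega)}\lesssim_{f,g,q}\e^{d/2+n}$, and the $\e^{-d/2}$ rescaling of the statement converts this into the announced $O(\e^n)$ bound.

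The main obstacle is not analytic but algebraic: one must keep careful track of the alternating signs and index permutations generated when transposing the higher-order differential operator $\bar\Ac_\e^n$, so that its formal adjoint is genuinely expressible through the adjoint effective tensors $\bar\Aa^{*,k}$ and matches the notation $(\nabla\bar\Oc_\e^n)^*[g]$ of the statement. Once this bookkeeping is carried out, every remaining ingredient---the divergence-free structure of $\Aa(\tfrac\cdot\e)\nabla u_{\e,f}+f$, the curl-free/divergence-free orthogonality in $\Ld^2(\R^d)^d$, the regularity theory for the adjoint homogenized hierarchy, and the CLT-scaling estimates from~\cite{DGO1,DO1}---is off-the-shelf.
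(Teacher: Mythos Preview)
Your proposal is correct and follows the natural duality argument that underlies the cited result~\cite[proof of Proposition~3.2]{DO1}; note that the present paper does not spell out its own proof of this lemma but simply refers to that reference. Your identification of the key steps---the adjoint higher-order homogenized hierarchy, the divergence-free/curl-free orthogonality, the algebraic substitution $\bar\Ac_\e^n\nabla u_{\e,f}=\Aa(\tfrac\cdot\e)\nabla u_{\e,f}-\Xi_\e^n[\nabla u_{\e,f}]$, and the cancellation of the deterministic term $-\int f\cdot(\nabla\bar\Oc_\e^n)^*[g]$ upon subtracting expectations---is exactly the standard route, and your flag about the adjoint bookkeeping for $(\bar\Ac_\e^n)^*$ versus $\bar\Aa^{*,k}$ is the one genuine point requiring care.

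One small remark on the quantitative part: you should be explicit that the CLT-scaling input is applied with the \emph{deterministic} test vector $E_\e$, whose $\Ld^p$ norms you first bound by $\e^n$ times Sobolev norms of $g$; the fluctuation estimate from~\cite{DGO1,DO1} then gives the extra $\e^{d/2}$ uniformly in such test functions. Also, the fluctuation bound you invoke is recalled in the paper just \emph{above} the lemma (in the opening paragraph of Appendix~\ref{sec:fluct}), not below it.
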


Next, we recall the definition of the $n$th-order standard commutator as the suitable two-scale expansion of the commutator $\Xi_\e^n[\nabla u_{\e,f}]$.
It is obtained by inserting the $n$th-order two-scale expansion $\Fc_\e^n[\cdot]$ into $\Xi_\e^n[\nabla\cdot]$ and by truncating the obtained differential operator to order $n$; see~\cite[Section~3]{DO1}.

\begin{defin}[Standard higher-order homogenization commutators]\label{def:Xi0n}
For \mbox{$0\le n\le\ell$}, the $n$th-order standard commutator $\Xi_\e^{\circ,n}[\nabla\cdot]$ is an $n$th-order differential operator with $\e$-rescaled stationary random coefficients in $H^{1-n}_\loc(\R^d;\Ld^2(\Omega))$, expressed in terms of correctors $\{\varphi^k\}_{1\le k\le n}$. Applied to $\bar w\in C^\infty_c(\R^d)$, it is defined by
\[\Xi_\e^{\circ,n}[\nabla\bar w](x)\,:=\,\Xi_\e^n\big[\nabla\Fc_\e^n[T_x^n\bar w]\big](x),\]
where $T_x^n\bar w$ is the $n$th-order Taylor polynomial of $\bar w$ at $x$.
Note that $\E[{\Xi_\e^{\circ,n}[\nabla\cdot]}]=0$.
\end{defin}

Our main result in~\cite[Corollary~1]{DO1} then shows that the two-scale expansion for commutators $\Xi_\e^{n}[\nabla u_{\e,f}]-\E[{\Xi_\e^{n}[\nabla u_{\e,f}]}]\approx \Xi_\e^{\circ,n}[\nabla\bar\Oc_\e^n[f]]$ is accurate to order $O(\e^n)$ in the fluctuation scaling. Due to the maximal number $\ell=\lceil\frac d2\rceil$ of stationary ``strong'' correctors, this saturates at order $O(\e^{d/2})$, which is expected to be optimal as fluctuations become non-Gaussian at that order~\cite{DO1}.

\begin{prop}[Two-scale expansion of commutators; \cite{DO1}]
For all $f,g\in C^\infty_c(\R^d)^d$ and~$q<\infty$, we have
\begin{align*}
&\expecM{\Big|\e^{-\frac d2}\int_{\R^d}g\cdot\big(\Xi_\e^\ell[\nabla u_{\e,f}]-\expecm{\Xi_\e^\ell[\nabla u_{\e,f}]}\big)-\e^{-\frac d2}\int_{\R^d}g\cdot\Xi_\e^{\circ,\ell}[\nabla\bar\Oc_\e^\ell[f]]\Big|^q}^\frac1q\\
&\hspace{7cm}\,\lesssim_{f,g,q}\,\e^\frac d2\times
\left\{\begin{array}{lll}
1&:&\text{$d$ odd},\\
\Log^\frac12&:&\text{$d$ even}.
\end{array}\right.\qedhere
\end{align*}
\end{prop}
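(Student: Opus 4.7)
The plan is to reduce the claim to CLT-type fluctuation estimates for the strong stationary correctors $\{\varphi^n,\sigma^n\}_{1\le n\le\ell}$, combining the PDE identity~\eqref{eq:2sc-identity-err} of Proposition~\ref{cor:2scale} with the algebra of standard commutators from Definition~\ref{def:Xi0n}. Concretely, I would decompose
\[
\Xi_\e^\ell[\nabla u_{\e,f}]-\Xi_\e^{\circ,\ell}[\nabla\bar\Oc_\e^\ell[f]]\,=\,\mathrm{(I)}+\mathrm{(II)},
\]
with
\[
\mathrm{(I)}\,:=\,\Xi_\e^\ell\big[\nabla u_{\e,f}-\nabla\Fc_\e^\ell[\bar\Oc_\e^\ell[f]]\big],\quad\mathrm{(II)}\,:=\,\Xi_\e^\ell\big[\nabla\Fc_\e^\ell[\bar\Oc_\e^\ell[f]]\big]-\Xi_\e^{\circ,\ell}[\nabla\bar\Oc_\e^\ell[f]],
\]
and estimate the $\Ld^q(\Omega)$ fluctuations of $\int_{\R^d}g\cdot\mathrm{(I)}$ and $\int_{\R^d}g\cdot\mathrm{(II)}$ separately. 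The centering by $\expec{\Xi_\e^\ell[\nabla u_{\e,f}]}$ on the left-hand side of the target inequality is absorbed by the built-in mean-zero property $\expec{\Xi_\e^{\circ,\ell}[\nabla\cdot]}=0$ from Definition~\ref{def:Xi0n}.

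Piece~(II) is purely algebraic. By Definition~\ref{def:Xi0n} we have $\Xi_\e^{\circ,\ell}[\nabla\bar w](x)=\Xi_\e^\ell[\nabla\Fc_\e^\ell[T_x^\ell\bar w]](x)$, so (II) is the Taylor remainder obtained by replacing the full function $\bar w=\bar\Oc_\e^\ell[f]$ by its local $\ell$th-order Taylor polynomial at each base point. A direct expansion produces an explicit residue of the form $\e^\ell$ times stationary corrector-weighted factors (built from $\varphi^n,\sigma^n$ with $n\le\ell$) contracted with spatial derivatives of $\bar\Oc_\e^\ell[f]$ of order $\ge\ell+1$. Tested against $g$, this becomes a finite sum of integrals $\int_{\R^d}g_\alpha(x)\,\Psi_\alpha(\tfrac x\e)\,dx$ with $g_\alpha$ deterministic and $\Psi_\alpha$ mean-zero stationary, so the corrector fluctuation-scaling estimate of Theorem~\ref{th:cor}(ii) delivers a CLT gain of $\e^{d/2}$, for a total of $\e^{\ell+d/2}$, matching the target rate with the $\Log^{1/2}$ correction at the critical corrector order in even dimension.

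For piece~(I) I would argue by duality against an adjoint problem. Defining $v_{\e,g}$ by
\[
-\nabla\cdot\Aa^*(\tfrac\cdot\e)\nabla v_{\e,g}\,=\,\nabla\cdot\big((\Aa^*(\tfrac\cdot\e)-(\bar\Ac_\e^\ell)^*)g\big),
\]
integration by parts yields $\int_{\R^d}g\cdot\mathrm{(I)}=-\int_{\R^d}\nabla v_{\e,g}\cdot\Aa(\tfrac\cdot\e)\big(\nabla u_{\e,f}-\nabla\Fc_\e^\ell[\bar\Oc_\e^\ell[f]]\big)$, and substituting identity~\eqref{eq:2sc-identity-err} then converts this into a sum of pairings of $\nabla v_{\e,g}$ against $\e^\ell(\Aa\varphi^\ell-\sigma^\ell)(\tfrac\cdot\e)\nabla^{\ell+1}\bar\Oc_\e^\ell[f]$ and analogous lower-order corrector source terms. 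Annealed $\Ld^p$ regularity (Theorem~\ref{th:CZ-ann}) controls $\nabla v_{\e,g}$ in $\Ld^p(\R^d;\Ld^q(\Omega))$ for arbitrary $1<p,q<\infty$, after which each pairing is a spatial average of a stationary random field weighted by quasi-deterministic factors.

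The hard part is that $\nabla v_{\e,g}$ is itself stationary and random, so after substitution the integrand is a \emph{product} of two stationary random fields, and a naive Cauchy--Schwarz would forfeit one CLT factor. The remedy is a decorrelation step: split $\nabla v_{\e,g}=\expec{\nabla v_{\e,g}}+(\nabla v_{\e,g}-\expec{\nabla v_{\e,g}})$. Against the deterministic mean, the CLT scaling of Theorem~\ref{th:cor}(ii) applies directly to the $\e^\ell$-weighted corrector average and produces $\e^{\ell+d/2}$. The residual fluctuation of $\nabla v_{\e,g}$ itself carries a CLT smallness of order $\e^{d/2}$ by the same fluctuation analysis applied to the dual problem, so its contribution to the product is bounded by $\e^\ell\times\e^{d/2}$ after an application of the covariance identity of Lemma~\ref{lem:Mall}(iii). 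It is precisely here that the criticality $\ell=\lceil d/2\rceil$ is sharp: the strong corrector estimates at order $\ell$ in Theorem~\ref{th:cor}(i) saturate with a logarithmic loss $\Log^{1/2}$ in even dimension, and this loss propagates verbatim into the rate on the right-hand side.
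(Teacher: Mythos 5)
First, note that the paper does not prove this proposition at all: it is quoted verbatim from \cite[Corollary~1]{DO1}, and the only in-paper trace of the underlying argument is Step~2 of the proof of Theorem~\ref{th:fluct0}, where the analogous bound is obtained by estimating the \emph{Malliavin derivative} of the commutator difference, $\int g\cdot D\big(\Xi_\e^n[\nabla u_{\e,f}]-\Xi_\e^{\circ,n}[\nabla\bar\Oc_\e^n f]\big)$, in annealed norms and then invoking the Helffer--Sj\"ostrand/Poincar\'e inequality of Lemma~\ref{lem:Mall}(iii). Your decomposition into (I) and (II) and the duality via $v_{\e,g}$ is a reasonable pathwise starting point, but the two places where the actual CLT gain of $\e^{d/2}$ must be produced are asserted rather than proved, and this is precisely where the content of the result lies.

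Concretely: (a) in both (II) and the ``Term~1'' part of (I) you invoke Theorem~\ref{th:cor}(ii) ``directly'' for averages of the critical-order factors, but the truncation residue and the source term in~\eqref{eq:2sc-identity-err} involve $\varphi^\ell,\sigma^\ell$ themselves, which are \emph{not} stationary (only their gradients are) and are not mean-zero stationary fields as you claim; Theorem~\ref{th:cor}(ii) only covers $\varphi^n,\sigma^n$ for $n<\ell$ and gradients up to order $\ell$. The fluctuation estimate for $\int h\,(\varphi^\ell,\sigma^\ell)(\tfrac\cdot\e)$ has to be derived separately (e.g.\ by rewriting in terms of $\nabla\varphi^\ell,\nabla\sigma^\ell$, or via a Malliavin bound), and it is exactly this step that produces the $\mu_d(\tfrac1\e)$ factor, hence the $\Log^{1/2}$ in even dimension. (b) More seriously, for the coupled term $\e^\ell\int(\nabla v_{\e,g}-\expec{\nabla v_{\e,g}})\cdot(\Aa\varphi^\ell-\sigma^\ell)(\tfrac\cdot\e)\nabla^{\ell+1}\bar\Oc_\e^\ell[f]$ you claim a bound $\e^\ell\times\e^{d/2}$ ``after an application of the covariance identity of Lemma~\ref{lem:Mall}(iii)''. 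This does not follow: $\nabla v_{\e,g}-\expec{\nabla v_{\e,g}}$ is $O(1)$ pointwise (its CLT smallness only concerns averages against \emph{deterministic} test functions), so a product against another $\e$-oscillating random field cannot be declared small by that token, and Lemma~\ref{lem:Mall}(iii) applied to this product requires bounding the Malliavin derivative of the product --- i.e.\ controlling $D\nabla v_{\e,g}$ through the differentiated equation with annealed Calder\'on--Zygmund theory (Theorem~\ref{th:CZ-ann}), together with $D$ of the corrector factor, and carrying out the spatial summation that yields the extra $\e^{d/2}$. That computation is the technical core of \cite{DO1} and is missing from your sketch; as written, the argument for (I) stops exactly where the difficulty begins.
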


\subsection{Proof of Theorem~\ref{th:fluct0}}

We split the proof into two steps.

\medskip
\step1 Reduction to commutators: for all $1\le n\le\ell$, $p>1$, and $\delta>0$,
\begin{multline}\label{eq:decomp-nabu-Xin0}
\bigg|\expecM{X\int_{\R^d}g\cdot\big(\nabla u_{\e,f}-\expec{\nabla u_{\e,f}}\!\big)}-\expecM{X\int_{\R^d}(\nabla\bar\Oc_\e^{n})^*[g]\cdot\big(\Xi_\e^n[\nabla u_{\e,f}]-\expec{\Xi_\e^n[\nabla u_{\e,f}]}\!\big)}\bigg|\\
\,\lesssim_{p,\delta}\,\e^{n+\frac d{p}}\|DX\|_{\Ld^{p}(\R^d;\Ld^{1+\delta}(\Omega))}\|\langle\nabla\rangle^{2n-1}g\|_{\Ld^{2p'}(\R^d)}\|f\|_{\Ld^{2p'}(\R^d)},
\end{multline}
and similarly, for fluctuations of the flux,
\begin{multline}\label{eq:decomp-nabu-Xin0-flux}
\bigg|\expecM{X\int_{\R^d}g\cdot\big(\Aa(\tfrac\cdot\e)\nabla u_{\e,f}-\expec{\Aa(\tfrac\cdot\e)\nabla u_{\e,f}}\,\big)}\\
-\expecM{X\int_{\R^d}(\Id+\bar\Ac_\e^n\nabla\bar\Oc_\e^{n})^*[g]\cdot\big(\Xi_\e^n[\nabla u_{\e,f}]-\expec{\Xi_\e^n[\nabla u_{\e,f}]}\!\big)}\bigg|\\
\,\lesssim_{p,q}\,\e^{n+\frac d{p}}\|DX\|_{\Ld^{p}(\R^d;\Ld^{1+\delta}(\Omega))}\|\langle\nabla\rangle^{3n-2}g\|_{\Ld^{2p'}(\R^d)}\|f\|_{\Ld^{2p'}(\R^d)}.
\end{multline}
Noting that $\Aa(\tfrac\cdot\e)\nabla u_{\e,f}=\Xi_\e^n[\nabla u_{\e,f}]+\bar\Ac_\e^n\nabla u_{\e,f}$, the claim~\eqref{eq:decomp-nabu-Xin0-flux} follows from~\eqref{eq:decomp-nabu-Xin0}, and it remains to prove the latter.
Given a test function $X\in\Sc(\Omega)$, starting from identity~\eqref{eq:ident-nabu-XiE} and appealing to the Helffer--Sjöstrand identity, see~Lemma~\ref{lem:Mall}, together with Hölder's inequality, we find for all $1\le p,q\le\infty$,
\begin{align}
\expecM{X\int_{\R^d}g\cdot\big(\nabla u_{\e,f}-\expec{\nabla u_{\e,f}}\!\big)}&-\expecM{X\int_{\R^d}(\nabla\bar\Oc_\e^{n})^*[g]\cdot\big(\Xi_\e^n[\nabla u_{\e,f}]-\expec{\Xi_\e^n[\nabla u_{\e,f}]}\!\big)}\nonumber\\
&\hspace{-2cm}=~\expecM{X\int_{\R^d}H_\e^n\cdot\big(\nabla u_{\e,f}-\expec{\nabla u_{\e,f}}\!\big)}\nonumber\\
&\hspace{-2cm}\le~\|DX\|_{\Ld^{p}(\R^d;\Ld^{q}(\Omega))}\Big\|\int_{\R^d}H_\e^n\cdot D\nabla u_{\e,f}\Big\|_{\Ld^{p'}(\R^d;\Ld^{q'}(\Omega))},\label{eq:decomp-nabu-Xin-flct}
\end{align}
where we have set for abbreviation
\[H_\e^n\,:=\,\sum_{k=2}^n\sum_{l=n+2-k}^n\e^{k+l-2}\bar\Aa_{i_1\ldots i_{k-1}}^{*,k}\nabla\nabla^{k-1}_{i_1\ldots i_{k-1}}\tilde v^l_g.\]
Consider the unique almost sure gradient solution $\nabla h_\e^n$ in $\Ld^2(\R^d)^d$ of the auxiliary equation
\[-\nabla\cdot\Aa^*(\tfrac\cdot\e)\nabla h_\e^n=\nabla\cdot H_\e^n,\qquad\text{in $\R^d$},\]
and note that taking the Malliavin derivative in the equation for $\nabla u_{\e,f}$ yields
\[-\nabla\cdot\Aa(\tfrac\cdot\e)\nabla D_z u_{\e,f}=\nabla\cdot (D_z\Aa(\tfrac\cdot\e))\nabla u_{\e,f}.\]
Then computing
\[\int_{\R^d}H_\e^n\cdot D_z\nabla u_{\e,f}\,=\,-\int_{\R^d}\nabla h_\e^n\cdot \Aa(\tfrac\cdot\e) D_z\nabla u_{\e,f}
\,=\,\int_{\R^d}\nabla h_\e^n\cdot (D_z\Aa(\tfrac\cdot\e))\nabla u_{\e,f},\]
and recalling that $|D_z\Aa(\frac x\e)|\lesssim|\calC_0(z-\frac x\e)|$, we obtain from Hölder's inequality and from the integrability condition~\eqref{eq:cov-L1},
\begin{eqnarray*}
\lefteqn{\Big\|\int_{\R^d}H_\e^n\cdot D\nabla u_{\e,f}\Big\|_{\Ld^{p'}(\R^d;\Ld^{q'}(\Omega))}}\\
&\lesssim&\bigg(\int_{\R^d}\Big|\int_{\R^d}[\calC_0]_\infty(\cdot-\tfrac x\e)\|[\nabla h_\e^n\nabla u_{\e,f}]_{1;\e}(x)\|_{\Ld^{q'}(\Omega)}\,dx\Big|^{p'}\bigg)^\frac1{p'}\\
&\lesssim&\e^{\frac d{p}}\|[\nabla h_\e^n\nabla u_{\e,f}]_{1;\e}\|_{\Ld^{p'}(\R^d;\Ld^{q'}(\Omega))}.
\end{eqnarray*}
Using again Hölder's inequality, together with the annealed $\Ld^p$ regularity of Theorem~\ref{th:CZ-ann}, this becomes for $p,q>1$,
\begin{multline*}
{\Big\|\int_{\R^d}H_\e^n\cdot D\nabla u_{\e,f}\Big\|_{\Ld^{p'}(\R^d;\Ld^{q'}(\Omega))}}
\,\lesssim\,\e^{\frac d{p}}\|[\nabla h_\e^n]_{2;\e}\|_{\Ld^{2p'}(\R^d;\Ld^{2q'}(\Omega))}\|[\nabla u_{\e,f}]_{2;\e}\|_{\Ld^{2p'}(\R^d;\Ld^{2q'}(\Omega))}\\
\,\lesssim_{p,q}\,\e^{\frac d{p}}\|H_\e^n\|_{\Ld^{2p'}(\R^d)}\|f\|_{\Ld^{2p'}(\R^d)}.
\end{multline*}
Inserting this into~\eqref{eq:decomp-nabu-Xin-flct}, and applying the standard constant-coefficient $\Ld^p$ regularity for higher-order homogenized equations, cf.~\eqref{eq:reg-high-hom}, the claim~\eqref{eq:decomp-nabu-Xin0} follows.

\medskip
\step2 Two-scale expansion of commutators: for all $1\le n\le\ell$, $1<p<\frac {2d}{2d-1}$, and~$\delta>0$,
\begin{align}\label{eq:bnd-DXiXi0ncl}
&\bigg|\expecM{X\int_{\R^d}g\cdot\Big(\big(\Xi_\e^n[\nabla u_{\e,f}]-\expec{\Xi_\e^n[\nabla u_{\e,f}]}\!\big)-\Xi_\e^{\circ,n}[\nabla\bar\Oc_\e^n[f]]\Big)}\bigg|\nonumber\\
&\hspace{0.5cm}\,\lesssim_{p,\delta}\,\e^{\frac dp+n}\mu_{d,n}(\tfrac1\e)\,\|DX\|_{\Ld^p(\R^d;\Ld^{1+\delta}(\Omega))}\nonumber\\
&\hspace{2cm}\times\|\mu_{d,n}\langle\nabla\rangle^{n}g\|_{(\Ld^4\cap\Ld^{2p'})(\R^d)}\|\mu_{d,n}\langle\nabla\rangle^{3n-2}f\|_{(\Ld^4\cap\Ld^{2p'})(\R^d)},
\end{align}
where the weight $\mu_{d,n}$ is given by $\mu_{d,n}=1$ if $n<\ell$, and by $\mu_{d,n}=\mu_d$ if $n=\ell$.
Combined with the result~\eqref{eq:decomp-nabu-Xin0}--\eqref{eq:decomp-nabu-Xin0-flux} of Step~1, and with the standard constant-coefficient $\Ld^p$ regularity for higher-order homogenized equations, cf.~\eqref{eq:reg-high-hom}, this yields the conclusion.

\medskip\noindent
Recalling that $\expec{\Xi_\e^{\circ,n}[\nabla\bar\Oc_\e^n[f]]}=0$, appealing to the Helffer--Sjöstrand identity, see Lemma~\ref{lem:Mall}(iii), together with Hölder's inequality, we find for all $1\le p,q\le\infty$,
\begin{multline*}
\bigg|\expecM{X\int_{\R^d}g\cdot\Big(\big(\Xi_\e^n[\nabla u_{\e,f}]-\expec{\Xi_\e^n[\nabla u_{\e,f}]}\!\big)-\Xi_\e^{\circ,n}[\nabla\bar\Oc_\e^n[f]]\Big)}\bigg|\\
\,\le\,\|DX\|_{\Ld^p(\R^d;\Ld^q(\Omega))}\Big\|\int_{\R^d}g\cdot D\big(\Xi^n_\e[\nabla u_{\e,f}]-\Xi^{\circ,n}_\e[\nabla\bar\Oc_\e^{n}[f]]\big)\Big\|_{\Ld^{p'}(\R^d;\Ld^{q'}(\Omega))}.
\end{multline*}
Repeating the analysis in~\cite[Theorem~1(ii)]{DO1} yields the following estimate for the last right-hand side term: for all $1\le n\le\ell$, $1<p<\frac d{n-1}$, and~$q>1$,
\begin{multline*}
\Big\|\int_{\R^d}g\cdot D\big(\Xi_\e^n[\nabla u_{\e,f}]-\Xi_\e^{\circ,n}[\nabla\bar\Oc_\e^nf]\big)\Big\|_{\Ld^{p'}(\R^d;\Ld^{q'}(\Omega))}\\
\,\lesssim_{p,q}\,\e^{\frac dp+n}\mu_{d,n}(\tfrac1\e)
\|\mu_{d,n}(|\cdot|)\langle\nabla\rangle^{n}g\|_{(\Ld^\frac{2dp'}{d+(n-1)p'}\cap\Ld^{2p'})(\R^d)}\\
\times\|\mu_{d,n}(|\cdot|)\langle\nabla\rangle^{2n-1}(f,\nabla\bar\Oc_\e^nf)\|_{(\Ld^\frac{2dp'}{d+(n-1)p'}\cap\Ld^{2p'})(\R^d)}.
\end{multline*}
Appealing to the standard (weighted) constant-coefficient $\Ld^p$ regularity for higher-order homogenized equations, cf.~\eqref{eq:reg-high-hom}, the claim~\eqref{eq:bnd-DXiXi0ncl} follows.
\qed

\section*{Acknowledgements}
The author thanks Antoine Gloria, Marius Lemm, François Pagano, and Felix Otto for many motivating and inspiring discussions around the Bourgain--Spencer conjecture,
and acknowledges financial support from the CNRS-Momentum program, from the F.R.S.-FNRS, as well as from the European Union (ERC, PASTIS, Grant Agreement n$^\circ$101075879).\footnote{{Views and opinions expressed are however those of the author only and do not necessarily reflect those of the European Union or the European Research Council Executive Agency. Neither the European Union nor the granting authority can be held responsible for them.}}

\bibliographystyle{plain}
\bibliography{biblio}

\end{document}